\pgfplotsset{compat=1.15}
\newcommand{\RR}{\mathbb{R}}
\newcommand{\HH}{{\mathbb{H}^N}}
\newcommand{\GG}{\Gamma}
\newcommand{\eps}{\varepsilon}
\numberwithin{equation}{section}
\newtheorem{theorem}{Theorem}[section]
\newtheorem{proposition}[theorem]{Proposition}
\newtheorem{lemma}[theorem]{Lemma}
\newtheorem{corollary}[theorem]{Corollary}
\newtheorem{hyp}[theorem]{Hypothesis}
\theoremstyle{definition}
\newtheorem{definition}[theorem]{Definition}
\theoremstyle{remark}
\newtheorem{remark}[theorem]{Remark}
\theoremstyle{remark}
\DeclareRobustCommand{\rchi}{{\mathpalette\irchi\relax}}
\newcommand{\irchi}[2]{\raisebox{\depth}{$#1\chi$}}
\newcommand{\dv}{\mathrm{div}}
\newcommand{\dd}{\mathrm{d}}
\newcommand{\dmu}{\dd\mu}
\newcommand{\Vol}{\mathrm{Vol}}
\newcommand{\loc}{\mathrm{loc}}
\def\?[#1]{\textbf{[#1]}\marginpar{\Large{\textbf{??}}}}
\begin{document}
\title[Concentration limit for convection-diffusion on the hyperbolic space]{Concentration limit for non-local dissipative convection-diffusion kernels on the hyperbolic space}

\author{Mar\'ia del Mar Gonz\'alez}
\address[M.~d.~M.~Gonz\'alez]{Departamento de Matematicas, Universidad Aut\'onoma de Madrid and ICMAT. Madrid 28049, Spain}
\email{mariamar.gonzalezn@uam.es}

\author[L.~I.~Ignat]{Liviu I. Ignat}
\address[L.~I.~Ignat]{Institute of Mathematics ``Simion Stoilow'' of the Romanian Academy\\
21 Calea Grivitei Street, 010702 Bucharest, Romania\\
and\\
The Research Institute of the University of Bucharest - ICUB\\
University of Bucharest\\
90-92 Sos. Panduri, 5th District, Bucharest, Romania}
\email{liviu.ignat@gmail.com}

\author[D.~Manea]{Drago\c s Manea}
\address[D.~Manea]{Institute of Mathematics ``Simion Stoilow'' of the Romanian Academy\\
21 Calea Grivitei Street, 010702 Bucharest, Romania\\
and\\
The Research Institute of the University of Bucharest - ICUB\\
University of Bucharest\\
90-92 Sos. Panduri, 5th District, Bucharest, Romania}
\email{dmanea28@gmail.com}

\author[S.~Moroianu]{Sergiu Moroianu}
\address[S.~Moroianu]{Institute of Mathematics ``Simion Stoilow'' of the Romanian Academy\\
21 Calea Grivitei, 010702 Bucharest, Romania\\ and\\
Facultatea de Matematic\u a, strada Academiei 14, Universitatea Bucure\c sti, Rom\^ ania}
\email{moroianu@alum.mit.edu}
\date{\today}
\begin{abstract}
We study a non-local evolution equation on the hyperbolic space $\mathbb{H}^N$.
We first consider a model for particle transport governed by a non-local interaction kernel defined on the tangent bundle and invariant under the geodesic flow. We study the relaxation limit of this model to a local transport problem, as the kernel gets concentrated near the origin of each tangent space. Under some regularity and integrability conditions on the kernel, we prove that the solution of the rescaled non-local problem converges to that of the local transport equation. Then, we construct a large class of interaction kernels that satisfy those conditions.

We also consider a non-local, non-linear convection-diffusion equation on $\mathbb{H}^N$ governed by two kernels, one for each of the diffusion and convection parts, and we prove that the solution converges to the solution of a local problem as the kernels get concentrated. We prove and then use in this sense a compactness tool on manifolds inspired by the work of Bourgain-Brezis-Mironescu.

\end{abstract}
\subjclass[2020]{45K05, 45M05, 58J35}
\keywords{Non-local convection-diffusion, Dissipative kernels, Hyperbolic space, Convergence of non-local equations to local equations, Functions invariant to Riemannian geodesic flow}
\maketitle
\section{Introduction}
This paper is concerned with non-local time-dependent interaction models for particles in the hyperbolic space $\HH$, 
expressing a mixed non-local diffusion-convection behaviour:
\begin{equation}
\label{IntroNonlocalConvDiff}
\begin{cases}\displaystyle
\begin{aligned}
 \partial_t u(t,x) = &\int_\HH J(d(x,y))(u(t,y)-u(t,x)) \dmu(y)\\
    &+ \int_{\HH} G(x,y)(f(u(t,y))-f(u(t,x))) \dmu(y) \end{aligned} & x\in \HH, t\geq 0;\\
u(0,x)= u_0(x), & x\in \HH.
\end{cases}
\end{equation}
The kernels $J:[0,\infty)\to [0,\infty)$ and $G:\HH\times \HH\to [0,\infty)$ encode the strength of the interaction (non-local diffusion and convection, respectively) between particles at positions $x$ and $y$. The non-linearity $f$ is a locally Lipschitz function, which will eventually be assumed to be of the form $f(r)= |r|^{q-1} r$, $q\geq 1$.

We aim to analyze the behavior of the solutions of \eqref{IntroNonlocalConvDiff} for a family of inhomogeneously rescaled kernels $(J_\eps)_{\eps>0}$ and $(G_\eps)_{\eps>0}$, describing, in the limit $\eps \to 0$, a concentration of the interactions to small distances between particles. The diffusion kernel $J$, which depends only on the distance between pairs of points, can be easily defined and rescaled on every complete Riemannian manifold. One of the  challenges  we faced was to properly define on $\HH$ a non-local convection kernel admitting a meaningful rescaling and dissipating the $L^2$ energy uniformly.

There exists a vast literature regarding evolution equations on manifolds and, in particular, on the hyperbolic space. For instance, the heat kernel on $\HH$ was computed explicitly and estimated uniformly in space and time by Davies and Mandouvalos \cite{Davies_Mandouvalos_1988} (see also \cite{Grigoryan_Noguchi_1998}). The existence of asymptotic profiles for the heat equation was studied by V\' azquez \cite{VazquezHeatHyperbolic}, in the case of the hyperbolic space and by Anker, Papageorgiou and Zhang \cite{Anker_Papageorgiou_Zhang}, in the more general case of symmetric spaces of noncompact type.

The existence of solutions for a non-linear heat equation on $\HH$ was discussed by Bandle, Pozio and Tesei \cite{Bandle_Pozio_Tesei_2011}, whereas Banica, Gonz\' alez and S\'{a}ez \cite{MR3375868} studied an extension problem for the fractional
Laplacian on non-compact manifolds.
The wave equation and corresponding Strichartz estimates on $\HH$ were analysed in \cite{tataru} and \cite{Anker_Pierfelice_Vallarino_2012}. The linear and non-linear Schr\" odinger equations on the hyperbolic space were studied by Banica \cite{Banica_2007}
and Anker and Pierfelice \cite{Anker_Pierfelice_2009}, whereas the incompressible Navier–Stokes system was considered by Pierfelice in \cite{Pierfelice_2017}. 
Many of the results above rely on the so-called Fourier-Helgason transform (see \cite{Helgason_1984} and \cite{brayHarmonicHyperbolic}), which can be used to simplify equations involving the Laplace-Beltrami operator on $\HH$. However, this transform does not behave well with respect to first-order differential operators such as the Riemannian gradient, hence, in the present paper, we preferred to use a different approach to non-local and local convection-diffusion problems.

There exists also an extensive literature on non-local diffusion-convection phenomena in Euclidean space, arising for instance from the peridynamic theory of continuous mechanics \cite{MR3560288} or from model processes, for example, in biology, dislocations dynamics etc.\ \cite{MR3938295}. 
From a mathematical point of view, there exist two types of non-local interaction operators that may be considered, depending on the properties of their kernels. The first one consists of integrable (or even smooth, compactly supported) kernels \cite{rossiNonLocal}, whereas the second type is represented by singular kernels similar to the fractional Laplacian \cite{MR3469920,MR3445279}.

Our paper deals with the first type of kernels \cite{rossiNonLocal} for which the well-posedness and convergence of solution to local problems have been previously studied in the Euclidean setting \cite{Ignat-Rossi-nonlocal, IgnatIgnatStancuDumitru}, but also in the case of non-local diffusion on quantum graphs \cite{IgnatRossiAngel}. 
In the Euclidean case there are various ways to model the convection by non-local operators \cite{Ignat-Rossi-nonlocal, MR2888353} and their numerical approximations  \cite{MR3170190, MR3646345}.
In \cite{Ignat-Rossi-nonlocal, IgnatIgnatStancuDumitru} the family of rescaled equations has the following form: 
\begin{equation}
\label{IntrotranspNonLocRNEps}
\begin{cases}\displaystyle
\begin{aligned}
\partial_t u(t,x) = {}&\eps^{-N-2}\int_{\RR^N}J\left(\frac{|y-x|}{\eps}\right)(u(t,y)-u(t,x)) \dd y \\
&+ \eps^{-N-1}\int_{\RR^N}\widetilde{G}\left(\frac{y-x}{\eps}\right)(f(u(t,y))-f(u(t,x))) \dd y,
\end{aligned} &\quad x\in \RR^N, t\geq 0;\\[10pt]
 u(0,x) = u_0(x), & x\in \RR^N.
\end{cases}
\end{equation}
for the point-pair invariant kernel $J:[0,\infty)\to [0,\infty)$ and the convolution kernel $\widetilde{G}:\RR^N \to [0,\infty)$.

As $\eps \to 0$, the solutions of \eqref{IntrotranspNonLocRNEps} converge strongly  in $L^2((0,T)\times\RR^N)$ (see   \cite[Th.~1.2]{Ignat-Rossi-nonlocal} for kernels $J$ and $G$ in $\mathcal{S}(\RR^N)$ and \cite{IgnatIgnatStancuDumitru} for  more general kernels) to the solution of the local convection-diffusion problem:
\begin{equation}
\label{IntrotranspLocRNEps}
\begin{cases}
\partial_t u(t,x) = A_J \Delta u(t,x) - \dv(f(u(t))X_G)(x), &  x\in \RR^N, t\geq 0;\\
u(0,x) =u_0(x),&  x\in \RR^N,
\end{cases}
\end{equation}
where 
\begin{equation}
    \label{AJ}
    A_J=\frac{1}{2N}\int_{\RR^N} J(|x|_e)|x|_e^2 \dd x>0
\end{equation}
 and $X_G=-\int_{\RR^N} \widetilde{G}(x) x\hspace{0.1cm} \dd x\in \RR^N$. Throughout the paper, $|x|_e$ stands for the Euclidean norm of the vector $x\in \RR^N$.

\subsection*{Convergence results in non-zero curvature}
Our goal is to study this concentration phenomenon outside the flat Euclidean setting, and to check whether the negative curvature has a qualitative effect on the convergence results stated above. We focus on the hyperbolic space $\HH$, arguably the simplest non-flat, complete non-compact Riemannian manifold. As a manifold, $\HH$ is diffeomorphic to $\RR^N$, and it has constant sectional curvature $-1$. 

A non-local diffusion model on $\HH$ has been already studied in a recent paper \cite{bandle2018}. The authors proved that the solutions of the evolution equation
\begin{equation}
\label{bandle.NonlocalDiff}
\begin{cases}
\partial_t u(t,x)=\int_\HH J(d(x,y))(u(t,y)-u(t,x)) \dmu(y), &
 x\in \HH,t\geq 0; \\[10pt]
 u(0,x)=u_0(x), & x\in \HH,
\end{cases}
\end{equation}
corresponding to the rescaled kernels 
\begin{equation}
\label{def.JEps}
J_\eps(r):=\eps^{-N-2}J\left(\frac{r}{\eps}\right)
\end{equation}
converge uniformly to the solutions of a local heat-Beltrami equation.

Our purpose is to include another term in \eqref{bandle.NonlocalDiff}, corresponding to a non-local convection effect. In this sense, we notice that, both in the Euclidean and the hyperbolic cases, the diffusion kernel $J(d(x,y))$ is symmetric in $x$ and $y$. On the other hand, the Euclidean non-local convection kernel in \eqref{IntrotranspNonLocRNEps}, namely $G(x,y)=\widetilde{G}(y-x)$, is not symmetric with respect to $x$ and $y$. In this way, it is possible to get a non-zero first moment vector $X_G=-\int_{\RR^N} \widetilde{G}(x) x \hspace{0.1cm}\dd x$. In the Euclidean setting, the fact that the kernel $G$ only depends on the difference vector $y-x$ is essential in proving that the sequence $(u_\eps)_{\eps>0}$ remains uniformly bounded in the $L^2$ norm as $\eps\to 0$. In fact, the following weaker symmetry property for $G$ is the one that plays this crucial role:
\begin{equation}\label{intro.GSymmetricIntegral}
\int_{\RR^N} [G(x,y)-G(y,x)] \dd x=0,\,\forall y \in \RR^N.
\end{equation}
\begin{definition}
We call $G$ a \textit{dissipative kernel} if the integral in \eqref{intro.GSymmetricIntegral} is well defined and is null.
\end{definition}
See Theorem \ref{nonLocalExistence} and Remark \ref{remark:dissipativeKernel} for the explanation of this terminology.

The main difficulty in adapting the model \eqref{IntrotranspNonLocRNEps} to the hyperbolic space, by adding a convection term to \eqref{bandle.NonlocalDiff} while keeping \eqref{intro.GSymmetricIntegral} valid, arises from the lack of an obvious analogue for the vector $y-x$ connecting the points $x$ and $y$. To overcome this issue, we came up with a construction via the geodesic flow on the hyperbolic space, taking into account that the injectivity radius of this space is infinite. More precisely, we make the following assumption on the kernel $G$: 
\begin{hyp}\label{G1.intro}
There exists a function $\widetilde{G}:T\HH \to [0,\infty)$ which is of class $C^1$ outside the zero section (i.e., on $T\HH\setminus\{(x,0): x\in \HH\}$), invariant under the Riemannian geodesic flow $(\Phi_t)_{t\in \RR}$ on $\HH$, such that, for every two distinct points $x,y\in \HH$,
\begin{equation}
\label{G1.intro.eq} G(x,y):=\widetilde{G}(x,V_{x,y}),
\end{equation}
where $V_{x,y}\in T_x\HH$ is the unique vector transporting $x$ to $y$ through the Riemannian exponential mapping (i.e., $\exp_x(V_{x,y})=y$).
\end{hyp}
Throughout the paper, we will assume (if not stated otherwise) that the function $\widetilde{G}:T\HH\to [0,\infty)$ has the kind of $C^1$ regularity specified above.

Detailed definitions and related results are given in Section \ref{section:geodesicFlow}. The hyperbolic analogue of dissipativity condition \eqref{intro.GSymmetricIntegral} is satisfied in this setting, as proven in Proposition \ref{GSymmetricIntegral}.

\subsection*{Linear transport on $\HH$}
In the first part of the paper, we consider the linear transport problem (i.e., we fix $J\equiv  0$ and $f(r)=r$ in \eqref{IntroNonlocalConvDiff}), which is the archetype for a convective non-local problem containing the essential difficulties we will face in the general case. It takes the following form:
\begin{equation}
\label{IntroNonlocalTransp}
\begin{cases}\displaystyle
 \partial_t u(t,x) = \int_{\HH} G(x,y)(u(t,y)-u(t,x)) \dmu(y), & x\in \HH,t\geq 0;\\[5pt]
u(0,x) = u_0(x), & x\in \HH.
\end{cases}
\end{equation}

The well-possedness in $L^2(\HH)$ of this non-local transport problem follows immediately if the right hand side of equation \eqref{IntroNonlocalTransp} defines a bounded operator on $L^2(\HH)$. This holds under very weak integrability conditions on $G$. Moreover, Hypothesis \ref{G1.intro} guarantees the crucial dissipativity property, i.e., that the $L^2(\HH)$ norm of the solution does not increase (see Theorem \ref{nonLocalExistence} for details).

\subsection*{Rescaling the kernel}
In order to rescale the problem \eqref{IntroNonlocalTransp}, we introduce the following kernel, which gives, as $\eps$ tends to $0$, more weight to the movement of particles which are close to each other (and hence having the distance  $d(x,y)=|V_{x,y}|$ smaller):
\begin{equation}
\label{defGEps.intro}
G_\eps(x,y)=\eps ^{-N-1}\widetilde{G}\left(x,\frac{1}{\eps}V_{x,y}\right), \, \eps\in (0,1).
\end{equation}
With this notation, the rescaled problem takes the form:
\begin{equation}
    \label{nonLocalEps.intro}
    \begin{cases}\displaystyle 
\partial_t u^\eps(t,x)=\int_\HH G_\eps (x,y) (u^\eps(t,y)-u^\eps(t,x)) \dmu(y), & x\in \HH,  t\geq 0;\\[10pt]
    u^\eps(0,x)=u_0(x), & x\in \HH.
    \end{cases}
\end{equation}
In order to obtain the convergence of the sequence of non-local solutions $(u_\eps)_{\eps>0}$ towards the local one, we further make some integrability and regularity assumptions concerning $\widetilde G$:

\begin{hyp}\label{G2.intro}
Let us denote $k_{\widetilde{G}}:[0,\infty)\to [0,\infty]$,
\[k_{\widetilde{G}}(r):=\sup_{x\in \HH, |W|=r}\widetilde{G}(x,W)\]
where $|W|$ stands for the hyperbolic norm of the tangent vector $W\in T_x\HH$.
We assume that:
\begin{equation}
\label{G2.intro.eq}
  M(\widetilde{G}):=\Vol(\mathbb{S}^{N-1})\int_0^\infty k_{\widetilde{G}}(r)  (1+r) \left(e^r\sinh(r)\right)^{N-1}  \dd r <\infty,
\end{equation}
where $\mathbb{S}^{N-1}$ stands for the unit sphere in $\RR^N$.
\end{hyp}

We note that, in the context of the hypothesis above, the kernel $G$ does not have to be bounded or $L^1$ on $\HH\times \HH$.

\begin{hyp}\label{G3.intro}
The function $\widetilde{G}$ is such that the \emph{first moment vector field} $X_G$ on $\HH$ defined below is of class $C^1$:
\begin{align}
\label{G3.intro.eq}
X_G(x)=-\int_{T_x\HH} \widetilde{G}(x,W) W \dd W,&&\forall x\in\HH.
\end{align}
\end{hyp}

In this setting, we can formulate the first main result of this paper:

\begin{theorem}
\label{transportConvergence.intro}
Let $G$ satisfy Hypotheses \ref{G1.intro},  \ref{G2.intro} and \ref{G3.intro}. For any
 $u_0\in L^2(\HH)$ and every $T>0$,  the family of solutions $(u^\eps)_{\eps>0}$ of the problem \eqref{nonLocalEps.intro}  converges weakly in $L^2([0,T],L^2(\HH))$, as $\eps\rightarrow 0$, to the unique weak solution (in the sense of Definition \ref{def.weakTranspX}) of the local transport problem:
 \begin{equation}
\label{IntrotranspX}
\begin{cases}
\partial_t u(t,x) = -\dv(u(t)X_G)(x), & x\in \HH, t\geq 0;\\[10pt]
 u(0,x)=u_0(x), & x\in \HH.
\end{cases}
\end{equation}
\end{theorem}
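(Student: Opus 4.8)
My plan is to combine a uniform energy bound with a weak--strong convergence argument against the adjoint operator. First I would record the a priori estimate: by the dissipativity established in Theorem \ref{nonLocalExistence}, the map $t\mapsto\|u^\eps(t)\|_{L^2(\HH)}$ is non-increasing, so $(u^\eps)_{\eps>0}$ is bounded in $L^\infty([0,T],L^2(\HH))$, hence weakly precompact in $L^2([0,T]\times\HH)$. Extract a subsequence with $u^\eps\rightharpoonup u$. Denoting by $L_\eps$ the non-local operator on the right of \eqref{nonLocalEps.intro}, the whole argument reduces to passing to the limit in the weak formulation
\[
-\int_0^T\!\!\int_\HH u^\eps\,\partial_t\phi\,\dmu\,\dd t-\int_\HH u_0\,\phi(0,\cdot)\,\dmu=\int_0^T\!\!\int_\HH u^\eps\,(L_\eps^*\phi)\,\dmu\,\dd t,
\]
for $\phi\in C_c^\infty([0,T)\times\HH)$, where $L_\eps^*$ is the formal adjoint of $L_\eps$. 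The terms on the left pass to the limit directly by weak convergence, so the heart of the matter is to identify $\lim_{\eps\to0}L_\eps^*\phi$ and to show this convergence is \emph{strong} in $L^2$, so that the right-hand side converges by the weak--strong pairing.

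Computing the adjoint by swapping the integration variables $x\leftrightarrow y$ gives, for fixed $t$ and a smooth $\varphi=\phi(t,\cdot)$,
\[
L_\eps^*\varphi(x)=\int_\HH G_\eps(y,x)\,(\varphi(y)-\varphi(x))\,\dmu(y)+\varphi(x)\int_\HH\big(G_\eps(y,x)-G_\eps(x,y)\big)\,\dmu(y).
\]
The second (zeroth-order) term vanishes identically by the hyperbolic dissipativity identity of Proposition \ref{GSymmetricIntegral} applied to $G_\eps$. For the first term I would exploit Hypothesis \ref{G1.intro}: since $\Phi_\eps\big(y,\tfrac1\eps V_{y,x}\big)=\big(x,-\tfrac1\eps V_{x,y}\big)$ along the connecting geodesic, the geodesic-flow invariance of $\widetilde G$ yields the key identity $\widetilde G\big(y,\tfrac1\eps V_{y,x}\big)=\widetilde G\big(x,-\tfrac1\eps V_{x,y}\big)$, so that $G_\eps(y,x)=\eps^{-N-1}\widetilde G\big(x,-\tfrac1\eps W\big)$ with $W=V_{x,y}$. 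Changing variables $y=\exp_x W$, with Jacobian $\mathcal J(x,W)=\big(\sinh|W|/|W|\big)^{N-1}$, and rescaling $W=\eps S$, I obtain
\[
L_\eps^*\varphi(x)=\int_{T_x\HH}\widetilde G(x,-S)\,\frac{\varphi(\exp_x(\eps S))-\varphi(x)}{\eps}\,\mathcal J(x,\eps S)\,\dd S.
\]
A first-order Taylor expansion of $\varphi\circ\exp_x$ together with $\mathcal J(x,\eps S)=1+O(\eps^2|S|^2)$ produces the pointwise limit $\int_{T_x\HH}\widetilde G(x,-S)\langle\nabla\varphi(x),S\rangle\,\dd S$, which equals $\langle X_G(x),\nabla\varphi(x)\rangle$ after the substitution $S\mapsto -S$ and the definition \eqref{G3.intro.eq} of $X_G$.

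It remains to upgrade this to strong $L^2$ convergence and then to conclude. The error terms carry factors $|S|^2$ and are integrated against $\widetilde G(x,-S)$ with the exponential Jacobian growth of $\HH$; this is exactly what Hypothesis \ref{G2.intro} controls, since $M(\widetilde G)<\infty$ with the weight $(1+r)\left(e^r\sinh r\right)^{N-1}$ bounds both the first moment defining $X_G$ and the quadratic remainders uniformly in $x$. Since $\varphi$ is compactly supported, $L_\eps^*\varphi$ is supported, for $\eps\le1$, in a fixed neighbourhood of $\operatorname{supp}\varphi$, so dominated convergence gives $L_\eps^*\phi(t,\cdot)\to\langle X_G,\nabla_x\phi(t,\cdot)\rangle$ strongly in $L^2(\HH)$, uniformly enough in $t$ to integrate; the $C^1$ regularity of $X_G$ from Hypothesis \ref{G3.intro} ensures the limit lies in the class required by Definition \ref{def.weakTranspX}. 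Pairing with $u^\eps\rightharpoonup u$ then yields exactly the weak formulation of \eqref{IntrotranspX}, and uniqueness of that weak solution promotes the subsequential limit to convergence of the whole family. The main obstacle I anticipate is the second step: making the change of variables via the geodesic flow rigorous and controlling the Taylor and Jacobian remainders uniformly in $x\in\HH$ against the exponential volume growth, which is precisely where the negative curvature enters and where the weighted integrability of Hypothesis \ref{G2.intro} must be used sharply.
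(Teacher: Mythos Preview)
Your overall architecture---uniform $L^2$ bound from dissipativity, extraction of a weak limit, computation of $L_\eps^*$ via the geodesic-flow invariance, and a weak--strong pairing---is exactly the paper's strategy. The gap is in the step where you upgrade the pointwise limit of $L_\eps^*\varphi$ to strong $L^2(\HH)$ convergence.

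Your claim that ``$L_\eps^*\varphi$ is supported, for $\eps\le 1$, in a fixed neighbourhood of $\operatorname{supp}\varphi$'' is false unless $\widetilde G$ has fiberwise compact support, which Hypothesis~\ref{G2.intro} does \emph{not} assume. For any $x$ with $d(x,\operatorname{supp}\varphi)>0$, the integrand $\widetilde G(x,-S)\bigl(\varphi(\exp_x(\eps S))-\varphi(x)\bigr)$ can be nonzero whenever $|S|\ge d(x,\operatorname{supp}\varphi)/\eps$, and nothing prevents $k_{\widetilde G}$ from being positive there. So dominated convergence in $x$ is not available directly. The paper closes this gap in two moves: first (Lemma~\ref{H1L2Boundedness}) it proves the \emph{uniform} operator bound $\|L_{G_\eps}^*\psi\|_{L^2}\le M(\widetilde G)\,\|\nabla\psi\|_{L^2}$, valid for all $\eps\in(0,1)$, via the change of variables of Lemma~\ref{lemmaJacobianExpFixedV}; then (Step~2 of Lemma~\ref{lemma.LGEpstoX}) it truncates $\widetilde G$ in the fiber variable to a compactly supported $\widetilde{G^\eta}$ with $M(\widetilde G-\widetilde{G^\eta})\le\eta$, applies your compact-support argument to $\widetilde{G^\eta}$, and controls the difference $L_{G_\eps}^*-L_{G_\eps^\eta}^*$ by the uniform bound. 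That approximation layer is the missing idea.

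A secondary issue: you write that the Taylor remainders ``carry factors $|S|^2$'' and are controlled by the weight $(1+r)(e^r\sinh r)^{N-1}$ in $M(\widetilde G)$. But a $(1+r)$ weight cannot absorb an $r^2$ factor at infinity. The paper avoids this by using the Fundamental Theorem of Calculus rather than a second-order expansion, writing $\varphi(\exp_x(\eps S))-\varphi(x)=\eps\int_0^1 \nabla F_x(\tau\eps S)\cdot S\,\dd\tau$, which produces only an $|S|$ factor---exactly what the $(1+r)$ weight in Hypothesis~\ref{G2.intro} is designed to handle.
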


\subsection*{Convection-diffusion processes on $\HH$} In the second part of the paper, we investigate the same concentration phenomenon for the more general family of non-local, non-linear convection-diffusion problems (i.e., $J \neq 0$, $f(r)=|r|^{q-1}r$, $q\geq 1$), where $\eps\in (0,1)$:
\begin{equation}
\label{into.NonlocalConvDiff.eps}
\begin{cases}\displaystyle
\begin{aligned}
 \partial_t u^\eps(t,x) = {}&\eps^{-N-2}\int_\HH J\left(\frac{d(x,y)}{\eps}\right)(u^\eps(t,y)-u^\eps(t,x)) \dmu(y)\\
    & \quad+ \int_{\HH} G_\eps(x,y)(f(u^\eps(t,y))-f(u^\eps (t,x))) \dmu(y)
 , \end{aligned} &  x\in \HH,t\geq 0;\\
u(0,x)= u_0(x), & x\in \HH.
\end{cases}
\end{equation}
Our second main result is the convergence of the solutions $(u_\eps)_{\eps>0}$ of the non-local problem \eqref{into.NonlocalConvDiff.eps} to the ones of a local convection-diffusion problem, under the following hypothesis on $J$:
\begin{hyp}
\label{J1.intro}
The function $J:[0,\infty)\to [0,\infty)$ is continuous, $J(0)>0$, and
\begin{equation}
\label{J1.intro.eq}
\widetilde{M}(J):=\Vol(\mathbb{S}^{N-1})\int_0^\infty J(r)\,(1+ r^2) \left(e^r\sinh(r)\right)^{N-1} \dd r <\infty.
\end{equation}
\end{hyp}

\begin{theorem}
\label{convergenceConvDiff.intro}
Let $J$ satisfy Hypothesis \ref{J1.intro} and $G$ satisfy Hypotheses \ref{G1.intro},  \ref{G2.intro} and \ref{G3.intro}. Let $f(r)=|r|^{q-1} r$, $q\geq 1$ and $u_0\in L^1(\HH)\cap L^\infty(\HH)$.

For every $T>0$, the family $(u^\eps)_{\eps>0}$ of $C^1([0,\infty),L^2(\HH))$ solutions of \eqref{into.NonlocalConvDiff.eps} converges weakly in $L^2([0,T],L^2(\HH))$ and strongly in $L^2([0,T],L^2_\loc(\HH))$ to the unique weak solution (in the sense of Definition \ref{def.weakLocalConvDiff}) of the following local convection-diffusion problem:
\begin{equation}
\label{localConvDiff.intro}
\left\{\begin{aligned}
& u_t(t,x) = A_J \Delta u(t,x) -\dv(f(u(t)) X_G)(x), & x\in \HH, t\geq 0; \\
& u(0,x)=u_0(x), & x\in \HH,
\end{aligned}\right.
\end{equation}
 where $\Delta$ stands for the Laplace-Beltrami operator on $\HH$ (see Section \ref{section:functionSpaces} for the sign convention), the diffusivity constant $A_J$ is:
\[
A_J=\frac{1}{2N}\int_{\RR^N} J(|V|_e)|V|_e^2\,\dd V,\]
and the first moment vector field $X_G$ is given in \eqref{G3.intro.eq}.
\end{theorem}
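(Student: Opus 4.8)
The plan is to derive $\eps$-uniform a priori bounds, extract a limit through a Bourgain--Brezis--Mironescu (BBM) compactness argument adapted to $\HH$, and then identify this limit by passing to the limit in the weak formulation of \eqref{into.NonlocalConvDiff.eps}, reusing the result of \cite{bandle2018} for the diffusion term and the consistency analysis behind Theorem~\ref{transportConvergence.intro} for the convection term. Existence of $u^\eps\in C^1([0,\infty),L^2(\HH))$ is granted by Theorem~\ref{nonLocalExistence}. For the uniform bounds I would first note that, since $f(r)=|r|^{q-1}r$ is nondecreasing and both nonlocal operators are order preserving, the comparison principle gives $\|u^\eps(t)\|_{L^\infty}\le\|u_0\|_{L^\infty}$. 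Integrating \eqref{into.NonlocalConvDiff.eps} over $\HH$ and using the symmetry of $J(d(\cdot,\cdot))$ together with the dissipativity of $G$ (Proposition~\ref{GSymmetricIntegral}) shows that the mass $\int_\HH u^\eps(t)\,\dmu$ is conserved; combined with order preservation this yields the $L^1$-contraction $\|u^\eps(t)\|_{L^1}\le\|u_0\|_{L^1}$. Interpolating, $\|u^\eps(t)\|_{L^2}^2\le\|u_0\|_{L^1}\|u_0\|_{L^\infty}$, so $(u^\eps)$ is bounded in $L^\infty([0,T],L^2(\HH))$ uniformly in $\eps$.

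The decisive estimate is the uniform control of the nonlocal diffusion seminorm. Testing \eqref{into.NonlocalConvDiff.eps} against $u^\eps$ and symmetrizing the diffusion kernel yields
\[ \tfrac12\tfrac{d}{dt}\|u^\eps\|_{L^2}^2+\tfrac12\,\eps^{-N-2}\!\int_\HH\!\int_\HH J\!\Big(\tfrac{d(x,y)}{\eps}\Big)\big(u^\eps(y)-u^\eps(x)\big)^2\,\dmu(y)\dmu(x)=\big\langle \mathcal G_\eps[f(u^\eps)],u^\eps\big\rangle, \]
where $\mathcal G_\eps[w](x)=\int_\HH G_\eps(x,y)(w(y)-w(x))\,\dmu(y)$. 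Splitting $G_\eps$ into its symmetric and antisymmetric parts, the symmetric contribution is nonpositive because $f$ is nondecreasing, while the antisymmetric contribution is the genuinely delicate term: by \eqref{defGEps.intro} its kernel carries the same $\eps$-order as $\eps\cdot\eps^{-N-2}J(\cdot/\eps)$, so after using $\int_\HH G_{\eps,a}(x,\cdot)\,\dmu=0$ (dissipativity) and the Lipschitz bound on $f$ on $[-\|u_0\|_\infty,\|u_0\|_\infty]$, a Cauchy--Schwarz and Young inequality absorbs a small multiple of the diffusion seminorm and leaves a term bounded by $C\|u^\eps\|_{L^2}^2$. Integrating in time gives the uniform bound $\eps^{-N-2}\int_0^T\!\int_\HH\!\int_\HH J(d/\eps)|u^\eps(y)-u^\eps(x)|^2\,\dmu\,\dmu\,dt\le C$.

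With these bounds I would invoke the manifold BBM compactness tool: since $J(0)>0$ and $J$ is continuous (Hypothesis~\ref{J1.intro}), the uniform $L^2$ bound and the uniform seminorm bound force $\{u^\eps(t)\}$ to be relatively compact in $L^2_{\mathrm{loc}}(\HH)$ with limits in $H^1(\HH)$. Reading off from the equation that $\partial_t u^\eps$ is bounded in the dual of a suitable space (the convection part uniformly in $L^2$, the diffusion part against smooth test functions), an Aubin--Lions--Simon argument produces a subsequence with $u^\eps\to u$ strongly in $L^2([0,T],L^2_{\mathrm{loc}}(\HH))$ and $u\in L^2([0,T],H^1(\HH))$; the uniform global $L^2$ bound upgrades this to weak convergence in $L^2([0,T],L^2(\HH))$. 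I would then pass to the limit against $\varphi\in C_c^\infty([0,T)\times\HH)$: symmetrizing the diffusion term and invoking the consistency of $J_\eps$ from \cite{bandle2018} together with the strong local convergence of $u^\eps$ produces the weak form of $A_J\Delta u$; for the convection term the $L^\infty$ bound and local Lipschitz continuity of $f$ give $f(u^\eps)\to f(u)$ strongly in $L^2_{\mathrm{loc}}$, so the nonlocal convection integral converges, by the operator-level consistency behind Theorem~\ref{transportConvergence.intro} applied to $w^\eps=f(u^\eps)$, to $\int_\HH f(u)\,X_G\cdot\nabla\varphi\,\dmu=-\int_\HH \dv(f(u)X_G)\,\varphi\,\dmu$. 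Hence $u$ solves \eqref{localConvDiff.intro} in the sense of Definition~\ref{def.weakLocalConvDiff}, and the uniqueness of such weak solutions promotes the subsequential convergence to convergence of the whole family.

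I expect the main obstacle to be concentrated in the compactness step. Because $\HH$ has exponential volume growth, the seminorm bound only delivers strong compactness locally, so one must control the behaviour at infinity and combine spatial with temporal compactness carefully; and, crucially, passing to the limit in the nonlinear convection term requires genuine strong (not merely weak) convergence of $u^\eps$, which is exactly what the BBM tool supplies. The uniform diffusion-seminorm estimate, in which the antisymmetric convection energy must be absorbed despite the $\eps^{-1}$ blow-up of the convection operator's mass, is the other technically demanding point.
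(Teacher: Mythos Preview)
Your overall architecture matches the paper's: uniform $L^\infty$, $L^1$, $L^2$ bounds; a uniform nonlocal energy (seminorm) bound; BBM-type compactness on manifolds giving $u\in L^2([0,T],H^1)$ and strong $L^2_{\loc}$ convergence (after controlling $\partial_t u^\eps$ in a negative Sobolev space); then pass to the limit in the weak formulation, using the consistency of $\widetilde L_{J_\eps}$ with $A_J\Delta$ and of $L_{G_\eps}^*$ with $X_G(\cdot)$; conclude by uniqueness. So the route is right.

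The one place where your sketch departs from the paper and introduces a genuine gap is the energy estimate. You split $G_\eps$ into symmetric and antisymmetric parts and try to absorb the antisymmetric contribution into the diffusion seminorm. But your absorption claim is not justified by the hypotheses: writing $|G_{\eps,a}(x,y)|\le \eps\cdot\eps^{-N-2}k_{\widetilde G}(d(x,y)/\eps)$ you would need $k_{\widetilde G}\lesssim J$ pointwise (to absorb the near-diagonal part) and a tail bound for the far part; the naive tail estimate $(u^\eps(y)-u^\eps(x))^2\le 2(u^\eps(y)^2+u^\eps(x)^2)$ gives a factor $\eps^{-1}$ after the change of variables $y=\exp_x(\eps W)$, which is exactly the blow-up you were worried about. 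Nothing in Hypotheses~\ref{G2.intro} or~\ref{J1.intro} relates $k_{\widetilde G}$ to $J$, so this step does not close as written.

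The paper avoids this difficulty entirely by observing that for $f(r)=|r|^{q-1}r$ the \emph{full} convection contribution has a sign: using Young's inequality $a^q b\le \frac{q}{q+1}a^{q+1}+\frac{1}{q+1}b^{q+1}$ together with the dissipativity identity \eqref{GSymmetricintegral.eq}, one gets directly
\[
\big(L_{G_\eps,f}(u^\eps),u^\eps\big)_{L^2(\HH)}=\int_{\HH\times\HH}G_\eps(x,y)\big(f(u^\eps(y))-f(u^\eps(x))\big)u^\eps(x)\,\dmu(y)\dmu(x)\le 0,
\]
with no splitting and no absorption. Testing \eqref{into.NonlocalConvDiff.eps} against $u^\eps$ then gives immediately
\[
\|u^\eps(T)\|_{L^2}^2+\tfrac12\,\eps^{-N-2}\!\int_0^T\!\!\int_{\HH\times\HH}\!J\!\Big(\tfrac{d(x,y)}{\eps}\Big)(u^\eps(y)-u^\eps(x))^2\,\dmu\,\dmu\,\dd t\le \|u_0\|_{L^2}^2,
\]
so the uniform $L^2$ bound and the uniform seminorm bound come for free from the same inequality (your $L^1$--$L^\infty$ interpolation is then unnecessary). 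Once you replace your symmetric/antisymmetric absorption by this sign observation, the rest of your outline coincides with the paper's proof.

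Two minor points: existence of $u^\eps$ is not Theorem~\ref{nonLocalExistence} (that is the linear transport case) but the nonlinear well-posedness in $\mathcal Z=L^1\cap L^\infty$ via a contraction argument; and the $L^\infty$ bound in the paper is not a comparison principle but a direct estimate on $\int(u^\eps-m)^+$ using the dissipativity of $G$.
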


\begin{remark} In the context of Theorems \ref{transportConvergence.intro} and \ref{convergenceConvDiff.intro}, we can relax the integrability conditions on $J$ and $G$ by requiring $J$ to be bounded from below by a positive constant in a neighbourhood of zero and to satisfy:
\[\int_0^1 J(r) r^{N-1} \dd r <\infty \hspace{0.8cm}\text{and}\hspace{0.8cm} \int_1^\infty J(r) e^{\eps_0 r} \dd r<\infty,\]
while for $G$ we require:
\[\int_0^1 k_{\widetilde{G}}(r) r^{N-1} \dd r <\infty \hspace{0.8cm}\text{and}\hspace{0.8cm} \int_1^\infty k_{\widetilde{G}}(r) e^{\eps_0 r} \dd r<\infty\]
for a constant $\eps_0>0$. The conclusions of the aforementioned theorems hold if we start from $\eps$ small enough instead of $\eps\in (0,1)$.
\end{remark}

\begin{remark}
Under additional constraints for $G$ in terms of $J$, as in \cite{IgnatIgnat2017}, Theorem \ref{convergenceConvDiff.intro} holds for more general non-linear terms, i.e., $f$ can be any non-decreasing locally Lipschitz function. This is done in Section \ref{sec:generalF}.
\end{remark}

We emphasize that the presence of the non-local diffusion term leads to the strong convergence on compact sets for the sequence $(u_\eps)_{\eps>0}$ and also provides more regularity to the limit function. This better behaviour follows from  a compactness result (Theorem \ref{compactnessResultM}) inspired from \cite{IgnatIgnatStancuDumitru} and \cite{rossiNonLocal}, where the authors deal with the Euclidean setting. We have adapted those results to the general case of complete Riemannian manifolds, using chart manipulation techniques as in \cite{SobolevNormsManifolds}. The condition imposed on the sequence of functions in order to obtain compactness resembles those considered by the authors of \cite{MR3586796} and \cite{rossiNonLocal}.

\subsection*{Structure of the paper}
Section \ref{section:theSpace} is a brief introduction to the Riemannian geometry of the hyperbolic space, describing two models for it: the Poincar\' e ball and the upper half-space. We also recall the properties of the geodesic flow on $T\HH$ and describe the invariant functions.

In Sections \ref{section.localLinearTransport}, \ref{section:nonlocalLinearTransport} and \ref{section:relaxationTransport}, we study the local and the non-local transport problems, proving the convergence of the latter to the former. Section \ref{section.exampleOfG} is dedicated to the construction of a rich class of interaction kernels $G$ which satisfy all the hypotheses \ref{G1.intro}, \ref{G2.intro} and \ref{G3.intro}.

Section \ref{section:compactenessManifolds} contains the compactness result.
Sections \ref{section:localNonLinearConvDiff}, \ref{section:nonlocalNonLinearConvDiff} and \ref{section:relaxationConvDiff} contain the analysis of the local and non-local convection-diffusion problems and the second convergence result.

\subsection*{Acknowledgement} The authors would like to thank the anonymous reviewer for providing valuable feedback that, in our opinion, significantly enhances the quality of the paper.

\section{The hyperbolic space $\HH$}
\label{section:theSpace}
\subsection{Function spaces on Riemannian manifolds}
\label{section:functionSpaces}
A Riemannian metric $g$ on a smooth $N$-dimensional manifold $M$ is a family of scalar products in each tangent space $T_xM$, varying smoothly with the base point $x$. In local coordinates $x_1,\ldots, x_N$, this means that for every $1\leq i,j\leq N$, the scalar products of the coordinate vector fields,
$g_{ij}(x)=g\left(\frac{\partial}{\partial x_i}, \frac{\partial}{\partial x_j}\right)$, are smooth functions of $x$ such that $g_{ij}=g_{ji}$ and the matrix $(g_{ij}(x))_{i,j}$ is positive definite.

The Riemannian density on $M$ is defined as follows: $\dd\mu_g(x)(X_1,\ldots,X_N)$ is the volume with respect to the $g$ of the parallelepiped spanned by the vectors $ X_1,\ldots,X_N$ in the vector space $T_x M$. In local coordinates, 
\[\dd\mu_g(x)=\sqrt{\det g_{ij}(x)}\,\dd x_1\ldots \dd x_N.\]
The function spaces $L^p(M)$ for $p\in [0,\infty]$ are defined in an obvious way.

To simplify the notation, we denote $X\cdot Y := g(X,Y)$ the metric product of the vectors $X$ and $Y$. Next, using the metric tensor $g$, we can define the Riemannian norm of a vector $X$ as:
\[|X|_g=\sqrt{X\cdot X}.\]
This leads to the definition of the spaces $L^p(M,TM)$ of vector fields on $M$.
In particular, for $p=2$, $L^2(M)$ and $L^2(M,TM)$ are Hilbert spaces, with the following scalar products:
\begin{align*}
(u_1, u_2)_{L^2(M)}=\int_M u_1(x) u_2(x)\dd\mu_g(x),&&(X_1, X_2)_{L^2(M,TM)}=\int_M X_1(x)\cdot X_2(x)\dd\mu_g(x).
\end{align*}
By abuse of notation, we will also denote by $\|\cdot\|_{L^p(M)}$ the norm of a vector field in $L^p(M,TM)$.
 
The differential of a smooth function $u$ does not depend on the metric, and is given locally by $du=\sum_{i=1}^N \frac{\partial u}{\partial x_i} dx_i$. The Riemannian gradient of $u$, denoted by $\nabla u$, is the vector field dual to $du$ with respect to $g$. More precisely, for every tangent vector $X$,
\[\nabla u\cdot  X = X(u).\]
The expression in local coordinates is the following:
\[\nabla u(x) = \sum_{i,j=1}^N \frac{\partial u}{\partial x_i} g^{ij}(x) \frac{\partial}{\partial x_j},
\]
where $g^{ij}$ are the coefficients of the inverse matrix of $(g_{ij}(x))_{i,j}$.

The Riemannian divergence operator is a first-order differential operator on vector fields defined as the \emph{negative} of the adjoint of the gradient operator with respect to the $L^2$ products.
In coordinates, for a $C^1$ vector field $X=\sum_{l=1}^N X^l(x) \frac{\partial}{\partial x_l}$,
\begin{equation}
\label{riemannianDiv}
\dv_g(X)= \frac{1}{\sqrt{\det g_{ij}}} \sum_{l=1}^N\frac{\partial}{\partial x_l} (X^l\sqrt{\det g_{ij}}).
\end{equation}

Finally, the Laplacian of a function is defined as $\Delta_g u = \dv_g\nabla_g u$. Note that the sign of $\dv_g$ and $\Delta_g$ used here adopts the so-called analyst's convention, making $\Delta_g$ into a non-positive operator in $L^2$.

The weak gradient of a $L^1_\loc (M)$ function $u$ (if it exists) is defined to be the unique $L^1_\loc(M)$ vector field satisfying:
\[\int_M \nabla_g u\cdot X \dd \mu_g(V)=-\int_M u\, \dv_g X \dd\mu_g(V),\]
for every compactly supported smooth vector field $X\in C^\infty_c(M,TM)$.

Next, we give the definition of the Sobolev spaces:
\[W^{1,p}(M)=\left\{ u\in L^p(M) : |\nabla_g u|_g\in L^p(M)\right\},\, p\in [1,\infty],\]
where, here, by $\nabla_g u$ we understand the weak Riemannian gradient of $u$. 
The norm on this Sobolev space is the usual one:
\[\|u\|_{W^{1,p}(M)}=\|u\|_{L^p(M)}+\|\nabla_g u\|_{L^p(M)}.\]

We note that, if the manifold $M$ is complete, the $C_c^\infty(M)$ functions are dense in $W^{1,p}(M)$, for $p\in [1,\infty)$. See, for example, \cite[Satz 2.3]{eichhornSobolev}.

For $p=2$, we have the following characterization, which follows by the Hahn-Banach extension theorem:
\begin{equation}
\label{H1WithDiv}
H^1(M)=\left\{u\in L^2(M): \exists C_u\geq 0,\ ( u, \dv_gX)_{L^2(M)}\leq C_u\|X\|_{L^2(M)},\forall X\in C^\infty_c(M,TM)\right\}.
\end{equation}
Moreover, the $L^2(M)$ norm of the weak gradient of $u$ is the minimum of the admissible values for $C_u$ above.

\subsection{The hyperbolic space. Two isometric models}
We recall some classical aspects about the hyperbolic space $\HH$ and its Riemannian geodesic flow. We begin with a brief presentation of two models of 
$\HH$, each of them to be used when most convenient in specific computations.
Throughout the paper, if not stated otherwise, the operators $\nabla$, $\dv$, $\Delta$ and the norm $|\cdot|$ correspond to the hyperbolic metric, whereas $\nabla_e$, $\dv_e$, $\Delta_e$ and the norm $|\cdot|_e$ are Euclidean. The symbol ``$\cdot$'' stands for the metric scalar product of vectors, either Riemannian or Euclidean, depending on the context.

\subsubsection{The Poincar\' e ball model}

The supporting set for the Poincar\' e ball model of the hyperbolic space $\HH$ is the open unit ball $B^N\subset \RR^N$. At every point $x\in \HH$, the tangent space $T_x\HH$ is canonically identified with $\RR^N$, and the metric tensor is defined by the diagonal matrix
$g_{ij}=\lambda^2(x)\delta_{ij},$
where $\lambda$ is the radial function defined by
\[\lambda(x)=\frac{2}{1-|x|_e^2}.\]

The expressions of the hyperbolic gradient, divergence and Laplacian in this model are as follows:
\begin{align*}
\nabla f = \frac{1}{\lambda^2} \nabla_e f,&&
\dv(Y)=\frac{1}{\lambda^N} \textstyle{\dv_e}(\lambda^N Y),&&
\Delta f = \frac{1}{\lambda^N} \textstyle{\dv_e} \left(\lambda^{N-2} \nabla_e f\right),
\end{align*}
Integration on $\HH$ and on its tangent space at a point $x$ in this model are defined with respect to the volume form $\dmu(x)=\lambda^N(x)\dd x$, respectively $\dmu(V) = \lambda^N(x)\dd V$:
\begin{align*}
\int_\HH f(x)\,\dmu(x)=\int_{B^N} f(x) \lambda^N(x) \,\dd x,&&
&\int_{T_x\HH} f(V)\, \dmu(V)=\lambda^N(x) \int_{\RR^N} f(V) \,\dd V.
\end{align*}

The \emph{boundary at infinity} of $\HH$ is the set of half-infinite geodesics modulo the equivalence relation of being asymptotically close to each other. In the unit ball model, the boundary at infinity is the unit sphere $\partial \HH \simeq \mathbb{S}^{N-1}$. 
A nonconstant (unparametrised) oriented geodesic in $\HH$ is uniquely determined by its initial and final points $\sigma^-\neq\sigma^+\in\partial\HH$.

\subsubsection{The upper half-space model}
The supporting set for this model is 
$\RR^N_+=\{x=(x',x_N)\in \mathbb R^{N-1}\times (0,\infty)\}$,
with Riemannian metric defined by
\[g_{ij}(x)=\frac{1}{x_N^2}\delta_{ij}.\]
In this setting, the expressions of the gradient, divergence and Laplacian are:
\begin{align*}
\nabla f = x_N^2 \nabla_e f,&& \dv(Y)=x_N^N \,\textstyle{\dv_e}\left(\frac{1}{x_N^N} Y\right), && \Delta f = x_N^N\, \textstyle{\dv_e} \left(\frac{1}{x_N^{N-2}} \nabla_e f\right).
\end{align*}
The volume form on $\HH$ and on its tangent space at a point $x$ become $\dmu(x)= x_N^{-N}\, \dd x$, respectively $\dmu(V)=x_N^{-N}\, \dd V$.
In this model, the boundary at infinity is the one-point compactification of the hyperplane $\{x_N=0\}$, that is $\partial \HH \simeq \overline{\RR^{N-1}}$.

These two models of $\HH$ are isometric; an example of isometry between them is the Cayley transform $\mathcal{C}:\mathbb R^N_+ \to B^N$, \begin{equation}
\label{isometryBetweenModels}
\mathcal{C}(x',x_N)=\left(\frac{2x'}{1+|x|_e^2+2x_N},\frac{|x|_e^2-1}{1+|x|_e^2+2x_N}\right),\quad \text{for } x=(x',x_N)\in \mathbb R^N_+.
\end{equation}
The isometry extends to a diffeomorphism between the boundaries at infinity by setting $y_N=0$, amounting to the inverse of the stereographic projection. It is a conformal diffeomorphism, reflecting the fact that $\partial\HH$ has a well-defined conformal class, but not a preferred metric.

\subsection{The geodesic flow on $\HH$}
\label{section:geodesicFlow}

We recall some facts about the Riemannian geodesic flow on $\HH$ and we give a characterization of the functions which are invariant under the flow. 

\begin{definition}
\label{defGeodesicFlow}
For every $(x,V)\in T\HH$, let $\gamma_{x,V}$ be the unique geodesic such that $\gamma(0)=x$ and $\gamma'(0)=V$. Moreover, for $V\neq 0$ let $\sigma^-(x,V)$, $\sigma^+(x,V)\in \partial \HH$ be the initial and final points at infinity of the geodesic $\gamma_{x,V}$ (refer to Figure \ref{fingure.poincareDisk}).
The geodesic flow $\Phi_t(x,V)$ emerging from the point $(x,V)$ in the tangent bundle, at time $t\in \RR$, has the following form:
\[\Phi_t(x,V)=(\gamma_{x,V}(t),\gamma_{x,V}'(t)).\]
We note that $\gamma_{x,V}(t)=\exp_x(tV)$.
\end{definition}

\begin{definition}
\label{invariantToGF}
We call a function $\widetilde{G}:T\HH\to \RR$ \emph{invariant with respect to the geodesic flow} $(\Phi_t)_{t\in \RR}$ on $\HH$ if, for every $t\in \RR$ and $(x,V)\in T\HH$,
\[\widetilde{G}(x,V)=\widetilde{G}(\Phi_t(x,V)).\]
\end{definition}

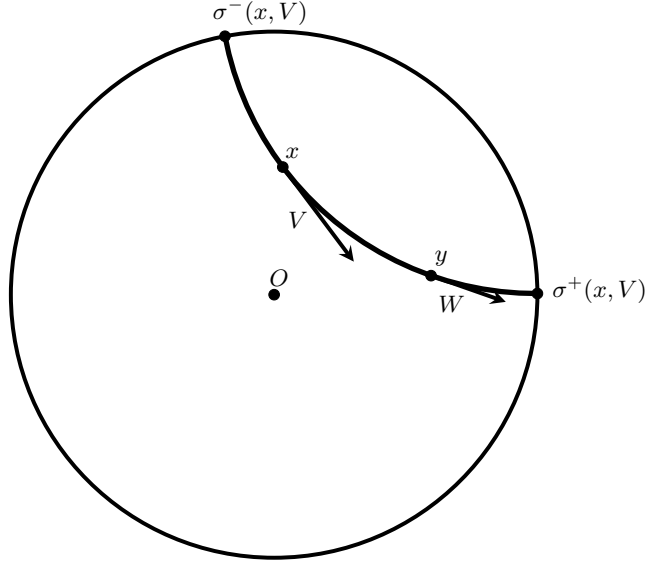
\begin{figure}[!ht]
\centering
\color{black}
\definecolor{xdxdff}{rgb}{0,0,0}
\definecolor{xdxdff}{rgb}{0,0,0}
\definecolor{uuuuuu}{rgb}{0,0,0}
\begin{tikzpicture}[line cap=round,line join=round,>=triangle 45,x=3.5cm,y=3.5cm]
\clip(-1.1247000368373487,-1.1353696047142738) rectangle (1.473424059207674,1.2103440906620705);
\draw [line width=1.5pt] (0,0) circle (3.5cm);
\draw [shift={(0.994705722146539,1.2070279123112277)},line width=2pt]  plot[domain=3.329472115499865:4.716783160626449,variable=\t]({1*1.2026453570232034*cos(\t r)+0*1.2026453570232034*sin(\t r)},{0*1.2026453570232034*cos(\t r)+1*1.2026453570232034*sin(\t r)});
\draw [-stealth,line width=1.5pt] (0.032790591869119035,0.4851725688296943) -- (0.3026598977390833,0.12555568594874844);
\draw [-stealth,line width=1.5pt] (0.5960946621733908,0.07236272925652965) -- (0.879925349285421,-0.02734778687832512);
\begin{scriptsize}
\draw [fill=uuuuuu] (0,0) circle (2pt);
\draw[color=uuuuuu] (0.019373888889911765,0.06497833469533648) node {$O$};
\draw [fill=xdxdff] (-0.1867760945330774,0.9824025094180954) circle (2pt);
\draw[color=xdxdff] (-0.05245941000704148,1.0741329514753544) node {$\sigma^-(x,V)$};
\draw [fill=xdxdff] (0.999990345605536,0.004394166100701227) circle (2pt);
\draw[color=xdxdff] (1.237130233300994,0.017246202907030037) node {$\sigma^+(x,V)$};
\draw [fill=xdxdff] (0.032790591869119035,0.4851725688296943) circle (2pt);
\draw[color=xdxdff] (0.06960305125203092,0.5372818360212774) node {$x$};
\draw[color=black] (0.09119531008211604,0.2849902124000856) node {$V$};
\draw [fill=xdxdff] (0.5960946621733908,0.07236272925652965) circle (2pt);
\draw[color=xdxdff] (0.6353202326002608,0.13702891824965997) node {$y$};
\draw[color=black] (0.674186298494414,-0.034346055923055085) node {$W$};
\end{scriptsize}
\end{tikzpicture}
\caption{\label{fingure.poincareDisk}A geodesic through the point $x$ in the Poincar\' e ball model, tangent to $V$, together with its initial and final points. The geodesic flow transports $(x,V)\in T\HH$ to $(y,W)\in T\HH$, i.e., $\Phi_t(x,V)=(y,W)$ for some $t>0$.}
\end{figure}

For every $0\neq V\in T_x\HH$, the curve $(\Phi_t(x,V))_{t\in\RR}$ describes a parametrised geodesic curve of constant speed $|V|$ which originates at the point at infinity $\sigma^-(x,V)\in\partial\HH$, passes through $x$ at $t=0$, and ends at $\sigma^+(x,V)\in \partial\HH$ (This actually defines a diffeomorphism $\Phi_x(\cdot)=\sigma^+(x,\cdot)$ from the unit sphere $T^1_x\HH$ to $\partial\HH$. The composition $\Phi_y^{-1}\circ \Phi_x$ turns out to be a conformal diffeomorphism, thus defining a conformal structure on $\partial\HH$).
Therefore, we are able to fully characterize the functions defined on $T\HH$ which are invariant with respect to $(\Phi_t)_{t\in \RR}$ by the following:
\begin{proposition}
\label{GInvariantGFq}
A function $\widetilde{G}:T\HH \to \RR$ is invariant with respect to the geodesic flow $(\Phi_t)_{t\in \RR}$ if and only if there exists a function $g:\partial \HH \times \partial \HH \times (0,\infty)\to \RR$ such that for all $V\neq 0$ in $T_x\HH$,
\[\widetilde{G}(x,V)=g(\sigma^-(x,V),\sigma^+(x,V),|V|).\]
\end{proposition}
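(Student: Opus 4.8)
The plan is to identify the triple $\bigl(\sigma^-(x,V),\sigma^+(x,V),|V|\bigr)$ as a \emph{complete} set of invariants for the geodesic flow, so that the level sets of the map
\[
\pi:(x,V)\longmapsto \bigl(\sigma^-(x,V),\sigma^+(x,V),|V|\bigr),
\]
defined on the complement of the zero section in $T\HH$, coincide exactly with the orbits of $(\Phi_t)_{t\in\RR}$. Once this is established, the asserted equivalence reduces to a purely set-theoretic factorization statement: an invariant function is one that is constant on orbits, hence factors through $\pi$.

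First I would dispose of the easy (``if'') direction. A geodesic has constant speed, so $|\gamma_{x,V}'(t)|=|V|$ for all $t$ and $|V|$ is flow-invariant. Likewise, every point of a fixed oriented geodesic shares the same pair of endpoints at infinity, whence $\sigma^\pm(\Phi_t(x,V))=\sigma^\pm(x,V)$. Therefore each of the three arguments is constant along orbits, and any $\widetilde G=g(\sigma^-,\sigma^+,|\cdot|)$ is automatically invariant.

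For the ``only if'' direction I would prove that $\pi$ separates orbits, that is, $\pi(x,V)=\pi(y,W)$ (with $V,W\neq 0$) forces $(y,W)=\Phi_{t_0}(x,V)$ for some $t_0\in\RR$. Since $V\neq 0$, the geodesic $\gamma_{x,V}$ is nonconstant and its endpoints satisfy $\sigma^-(x,V)\neq\sigma^+(x,V)$, and similarly for $(y,W)$. By the uniqueness of the unparametrised geodesic joining two distinct boundary points (recalled in the text), $(x,V)$ and $(y,W)$ lie on the same oriented geodesic curve $C$, with both $V$ and $W$ pointing toward the common endpoint $\sigma^+$. The constant-speed parametrisation $t\mapsto\gamma_{x,V}(t)$ is an injective parametrisation of $C$, so there is a unique $t_0$ with $\gamma_{x,V}(t_0)=y$; the velocity $\gamma_{x,V}'(t_0)$ is then the unique vector at $y$ tangent to $C$, pointing toward $\sigma^+$, of norm $|V|=|W|$, and hence equals $W$. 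Thus $\Phi_{t_0}(x,V)=(y,W)$, which together with the easy direction shows that the fibres of $\pi$ are precisely the flow orbits.

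Finally I would conclude the factorization. Given an invariant $\widetilde G$, define $g$ on the image of $\pi$ (which is contained in $\{(\sigma^-,\sigma^+,s):\sigma^-\neq\sigma^+,\ s>0\}$) by $g(\pi(x,V)):=\widetilde G(x,V)$; this is well defined precisely because $\widetilde G$ is constant on the fibres of $\pi$, which we have just identified with the orbits on which $\widetilde G$ is constant. Extending $g$ arbitrarily (say by $0$) to the remainder of $\partial\HH\times\partial\HH\times(0,\infty)$ yields a function with the required property. The main obstacle is the orbit-separation step: it rests on the uniqueness of the oriented geodesic between two points at infinity together with the injectivity of the constant-speed parametrisation, both standard features of $\HH$ already recorded above.
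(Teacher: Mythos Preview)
Your proof is correct and follows essentially the same approach as the paper: both identify the orbits of the geodesic flow on the complement of the zero section with triples $(\sigma^-,\sigma^+,|V|)$ and then invoke the standard factorization of an invariant function through the orbit space. Your version is simply more explicit, spelling out the orbit-separation step that the paper condenses into the single phrase ``every non-constant trajectory is uniquely characterised by a pair of distinct points at infinity $\sigma^-$, $\sigma^+$, together with the energy level, i.e., the speed $|V|$.''
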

\begin{proof}
Refer again to Figure \ref{fingure.poincareDisk}. There is a one-to-one correspondence between the functions which are invariant to the geodesic flow and those defined on the set of trajectories 
\[\left\{(\Phi_t(x,V))_{t\in\RR}:(x,V)\in T\HH\right\}.\]
The projection of such a trajectory on the manifold is a geodesic and the flow preserves the hyperbolic length of vectors. Every non-constant trajectory is uniquely characterised by a pair of distinct points at in infinity $\sigma^-$, $\sigma^+$, together with the energy level, i.e., the speed $|V|$.
\end{proof}

\subsection{Properties of the exponential mapping}
The aim of the following lemma is to compute the differential of the exponential mapping in $\HH$.
Let $P(x,y)$ denote the parallel transport in $T\HH$ along the unique geodesic between the points $x$ and $y$ in $\HH$.

\begin{lemma}
\label{diffExp}
For every $x\in \HH$ and $0\neq W\in T_x\HH$, the differential of exponential mapping acts as follows:
\[\dd_W(\exp_x)(V)=\left\{\begin{aligned}
&P(x,\exp_x(W))(V), & V=\alpha W \in {\rm span}\{W\}\\
&\frac{\sinh(|W|)}{|W|}(P(x, \exp_x(W))(V), & V\bot W.
\end{aligned}\right.\]
In particular, the determinant of the Jacobian matrix of $\exp_x$ at $W$ is:
\[J_{\exp_x}(W)=\left(\frac{\sinh(|W|)}{|W|}\right)^{N-1}.\]
\end{lemma}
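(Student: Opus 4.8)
The plan is to compute $\dd_W(\exp_x)$ by Jacobi field theory, using crucially that $\HH$ has constant sectional curvature $-1$. Fix $0\neq W\in T_x\HH$, write $r=|W|$, and let $\gamma(t)=\exp_x(tW)$ be the geodesic with $\gamma(0)=x$ and $\gamma'(0)=W$; it has constant speed $r$ and reaches $\exp_x(W)$ at $t=1$. Given $V\in T_x\HH$, identified as usual with an element of $T_W(T_x\HH)$, I would consider the geodesic variation $\alpha(s,t)=\exp_x\big(t(W+sV)\big)$ and its variation field $J(t)=\left.\tfrac{\partial}{\partial s}\right|_{s=0}\alpha(s,t)$. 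Each $t\mapsto\alpha(s,t)$ is a geodesic, so $J$ is a Jacobi field along $\gamma$, and since $\alpha(s,1)=\exp_x(W+sV)$ one gets $J(1)=\dd_W(\exp_x)(V)$. Differentiating the boundary data yields the initial conditions $J(0)=0$ (because $\alpha(s,0)\equiv x$) and $\tfrac{D}{dt}J(0)=V$ (using the torsion-free symmetry $\tfrac{D}{dt}\partial_s\alpha=\tfrac{D}{ds}\partial_t\alpha$ together with $\partial_t\alpha(s,0)=W+sV$).

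I would then substitute the constant-curvature expression $R(X,Y)Z=\langle X,Z\rangle Y-\langle Y,Z\rangle X$ (valid for sectional curvature $-1$, with $\langle\cdot,\cdot\rangle$ the hyperbolic metric) into the Jacobi equation $\tfrac{D^2}{dt^2}J+R(J,\gamma')\gamma'=0$. Decomposing $V=\alpha W+V^{\perp}$ with $V^{\perp}\perp W$, and recalling that parallel transport along $\gamma$ preserves both $\gamma'$ and its orthogonal complement, the equation splits into a tangential and a normal part, which I can solve independently and then add by linearity of $\dd_W(\exp_x)$.

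For the tangential datum $V=\alpha W$ the Jacobi field stays parallel to $\gamma'$, so $R(J,\gamma')\gamma'=0$ and the equation reduces to $\tfrac{D^2}{dt^2}J=0$; integrating with $J(0)=0$ and $\tfrac{D}{dt}J(0)=\alpha W$ gives $J(t)=t\,\alpha\,P_{0,t}(W)$, where $P_{0,t}$ is parallel transport along $\gamma$ from $\gamma(0)$ to $\gamma(t)$, and at $t=1$ this equals $P(x,\exp_x W)(V)$, the first case. For the normal datum $V^{\perp}$, writing $J(t)=h(t)E(t)$ with $E(t)=P_{0,t}\big(V^{\perp}/|V^{\perp}|\big)$ a unit parallel field orthogonal to $\gamma'$, the curvature term becomes $R(J,\gamma')\gamma'=-r^2J$, so the Jacobi equation collapses to the scalar ODE $h''-r^2h=0$ with $h(0)=0$, $h'(0)=|V^{\perp}|$; hence $h(t)=\tfrac{|V^{\perp}|}{r}\sinh(rt)$ and $J(1)=\tfrac{\sinh r}{r}\,P(x,\exp_x W)(V^{\perp})$, the second case. (As a sanity check, the radial statement also follows directly from the Gauss lemma.)

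For the Jacobian determinant I would pick an orthonormal basis $e_1=W/|W|,e_2,\dots,e_N$ of $T_x\HH$ adapted to $W$. By the formula just established, $\dd_W(\exp_x)$ sends $e_1$ to the unit vector $P(x,\exp_x W)(e_1)$ and each $e_i$, $i\geq 2$, to $\tfrac{\sinh r}{r}P(x,\exp_x W)(e_i)$; since $P(x,\exp_x W)$ is a linear isometry, these images form an orthogonal frame at $\exp_x(W)$, so the matrix of $\dd_W(\exp_x)$ in the adapted orthonormal bases is diagonal with entries $1,\tfrac{\sinh r}{r},\dots,\tfrac{\sinh r}{r}$ and determinant $\big(\tfrac{\sinh|W|}{|W|}\big)^{N-1}$. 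The point requiring the most care is the speed normalisation: the geodesic $t\mapsto\exp_x(tW)$ is parametrised to reach $\exp_x(W)$ at time $t=1$, hence has speed $r=|W|$ rather than $1$; this is exactly what places $r$ inside the hyperbolic sine and turns the normal Jacobi equation into $h''=r^2h$, so tracking this factor correctly is the crux of obtaining the precise coefficient $\sinh(|W|)/|W|$.
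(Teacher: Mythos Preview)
Your argument is correct and complete: the Jacobi-field computation is carried out cleanly, the initial conditions $J(0)=0$, $\tfrac{D}{dt}J(0)=V$ are justified properly, the constant-curvature form of $R$ is used with the right sign, and the speed normalisation leading to $h''=r^2h$ is handled with care.

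The paper, however, takes a different and more down-to-earth route: it works in the Poincar\'e ball model, applies a hyperbolic isometry sending $x$ to the origin, writes the exponential map explicitly as $\exp_0(W)=\tfrac{\tanh|W|_e}{|W|_e}\,W$, and then differentiates this formula by hand in the radial and perpendicular directions. Your approach is intrinsic and model-free---it rests only on the constant sectional curvature and standard Jacobi-field theory, so it would transplant verbatim to any space form. The paper's approach is more elementary in the sense of avoiding the Jacobi machinery, but it is tied to the explicit ball-model coordinates and requires the preliminary step of reducing to $x=0$ via an isometry. Both give the same result with comparable effort; your version has the advantage of making transparent why the factor $\sinh(|W|)/|W|$ is exactly the normal Jacobi-field amplitude in curvature $-1$.
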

\begin{proof}
We work in the Poincar\' e ball model and we first apply a hyperbolic isometry such that $x$ becomes the origin of the Poincar\'e ball. 
The rays originating in $x=0$ are geodesics, so
\begin{align*}
\exp_0(W)=\frac{\tanh\left(|W|_e\right)}{|W|_e}W. 
\end{align*}
We can now compute easily for $V\perp W$:
\begin{align*}
P(0,\exp_0(W))(V)=\frac{1}{\cosh^2\left(|W|_e\right)}V,
\end{align*}
The conclusion follows by direct computation.
\end{proof}
Of course, this lemma is independent of the model we choose for the hyperbolic space.\\

The next lemma provides some bounds on the integrals which contain the exponential mapping and will be used during the proofs of Theorems \ref{transportConvergence.intro} and \ref{convergenceConvDiff.intro}.

\begin{lemma}
\label{lemmaJacobianExpFixedV}
Consider a fixed vector $V\in \RR^N$. Identifying the tangent space in each point of the half-space model with $\RR^N$, we can view $V$ as a vector field on the hyperbolic space. Then, for every non-negative measurable function $\Psi:\HH\to [0,\infty)$,
\[\int_\HH \Psi(\exp_y(y_N V)) \dmu(y) \leq e^{(N-1)|V|_e} \int_\HH \Psi(z) \dmu(z).\]
\end{lemma}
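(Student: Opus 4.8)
The plan is to read the left-hand side through the change of variables $z=F(y):=\exp_y(y_NV)$ and reduce everything to a pointwise lower bound on the Riemannian Jacobian of $F$. First I would record the elementary fact that, at every $y=(y',y_N)$, the tangent vector $y_NV\in T_y\HH$ has constant hyperbolic norm $|y_NV|=|V|_e$ (because $g_{ij}=y_N^{-2}\delta_{ij}$), so that $F$ displaces every point by the fixed hyperbolic distance $d(y,F(y))=|V|_e$. This alone makes $F$ proper: for compact $K$, the set $F^{-1}(K)$ is closed and contained in the bounded set $\{\,d(\cdot,K)\le|V|_e\,\}$, hence compact by Hopf--Rinow. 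Once I also know (from the computation below) that $\mathrm{Jac}_gF$ never vanishes, $F$ is a proper local diffeomorphism of $\HH$; being a covering map onto the simply connected $\HH$, it is a global diffeomorphism. The change of variables then gives $\int_\HH\Psi(F(y))\dmu(y)=\int_\HH\Psi(z)\,\mathrm{Jac}_gF\big(F^{-1}(z)\big)^{-1}\dmu(z)$, so it suffices to prove $\mathrm{Jac}_gF(y)\ge e^{-(N-1)|V|_e}$ for every $y$.

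To compute $\mathrm{Jac}_gF$ I would differentiate $F$ by means of Jacobi fields, since $F$ moves both the base point and the initial vector of a geodesic and Lemma \ref{diffExp} only accounts for the latter. Fixing $y$ and $Y\in T_y\HH$, I consider the geodesic variation $\gamma_s(t)=\exp_{c(s)}\!\big(t\,W(c(s))\big)$, with $c(0)=y$, $\dot c(0)=Y$ and $W(x):=x_NV$; then $d_yF(Y)=J(1)$, where $J$ is the Jacobi field along $\gamma_0(t)=\exp_y(t\,y_NV)$ satisfying $J(0)=Y$ and $\tfrac{D}{dt}J(0)=\nabla_YW$. The only genuinely new ingredient is the covariant-derivative term $\nabla_YW$, which I would obtain from the Christoffel symbols of the half-space metric; the outcome is that, in the orthonormal frame $E_i=y_N\partial_i$, the operator $Y\mapsto\nabla_YW$ is represented by the $y$-independent matrix $-V^N\,\mathrm{Id}+e_NV^{\!\top}$, where $V^N$ is the last Euclidean component of $V$. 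Its restriction to the Euclidean-orthogonal complement of $V$ is the scalar $-V^N\,\mathrm{Id}$, and it kills the $V$-component; this is exactly the structure that makes the determinant collapse.

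Finally I would solve the Jacobi equation in constant curvature $-1$ along the speed-$r$ geodesic $\gamma_0$, $r:=|V|_e$, in a parallel orthonormal frame whose last vector is $\dot\gamma_0/r$. The tangential component evolves affinely and, since $\nabla_YW$ has no $V$-component, is simply preserved; each perpendicular component $b$ solves $b''=r^2b$, giving $b(1)=\cosh(r)\,b(0)+\tfrac{\sinh r}{r}\,b'(0)$. Plugging in the initial data makes $d_yF$ block lower-triangular in this frame, with a $1$ in the tangential slot and the scalar $\cosh r-\tfrac{\sinh r}{r}V^N=\cosh r-c\sinh r$ (where $c:=V^N/|V|_e\in[-1,1]$) on the $(N-1)$-dimensional perpendicular block. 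Hence $\mathrm{Jac}_gF(y)=(\cosh r-c\sinh r)^{N-1}$, a positive constant; since $c\le1$ and $\sinh r\ge0$ we have $\cosh r-c\sinh r\ge\cosh r-\sinh r=e^{-r}$, so $\mathrm{Jac}_gF\ge e^{-(N-1)|V|_e}$, and the change of variables closes the argument (in fact with equality up to the final estimate).

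The step I expect to be the main obstacle is organizing the Jacobi-field differentiation of $F$ correctly, especially the covariant derivative $\nabla_YW$, which is where the base-point variation enters and where Lemma \ref{diffExp} is not directly applicable. A secondary point requiring care is the justification that $F$ is a genuine diffeomorphism, so that the substitution is an equality rather than a multiplicity-weighted inequality; here the constant-displacement identity $d(y,F(y))=|V|_e$ makes the properness, and hence the diffeomorphism property, painless.
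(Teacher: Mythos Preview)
Your proof is correct and takes a genuinely different route from the paper. The paper computes the map $\mathcal{T}_V(y)=\exp_y(y_NV)$ explicitly in half-space coordinates and finds it to be affine,
\[
\mathcal{T}_V(y',y_N)=\Big(y'+y_N\,\tfrac{\sinh|V|_e\,C_V}{|V|_e}\,V',\ C_V\,y_N\Big),
\]
for a constant $C_V$ depending only on $|V|_e$ and the angle $\theta$ of $V$ with the horizontal hyperplane. From this affine form the diffeomorphism property is immediate, the Euclidean Jacobian equals the constant $C_V$, and the conversion to the hyperbolic measure together with the elementary bound $e^{-|V|_e}\le C_V\le e^{|V|_e}$ finishes the proof in a few lines. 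Your Jacobi-field computation recovers exactly the same Riemannian Jacobian---your $\cosh r-c\sinh r$ is the paper's $C_V^{-1}$, with $c=\sin\theta=V^N/|V|_e$---but does so intrinsically, without ever writing down the geodesics. That buys you a method that would transfer to other rank-one symmetric spaces where explicit geodesic formulas are less convenient, at the price of the Christoffel-symbol and Jacobi-equation bookkeeping; the paper's approach is shorter here precisely because the half-space model makes $\mathcal{T}_V$ visibly affine and sidesteps the covering-map argument entirely.
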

\begin{proof}
At every point $y=(y',y_N)\in \RR^N_+$, the vector $y_N V\in T_y\HH\simeq \RR^N$ has length $|y_N V|=|V|_e.$
We denote by $\mathcal{T}_V:\HH\to \HH$ the hyperbolic exponential of $y_N V$:
\[\mathcal{T}_V(y)=\exp_y(y_N V).\]
Writing $V=(V',V_N)$ and $y=(y',y_N)$, it is straightforward to compute
\[\mathcal{T}_V(y',y_N)=\left(y'+y_N\frac{\sinh|V|_e  C_V}{|V|_e} V', y_N C_V\right),\]
where
\[C_V:=\frac{\cosh|V|_e + \sin\theta\cdot  \sinh|V|_e}{\cos^2\theta\cdot \cosh^2|V|_e+\sin^2(\theta)}.\]
 Here, $\theta$ stands for the angle between $V$ and the horizontal hyperplane. It follows that the determinant of the Jacobian of $\mathcal{T}_V$ at $y$ equals $C_V$, which, in particular,  is independent of $y$. By the change of variables $z=\mathcal{T}_V(y)$, we obtain that $\dd z=C_V \dd y$ and $z_N=C_Vy_N$, hence $\dmu(z)=C_V^{1-N}\dmu(y)$ and the conclusion follows from the evident estimate for the constant $C_V$ defined above:
 \[e^{-|V|_e}\leq C_V\leq e^{|V|_e}.\qedhere\]
\end{proof}

\section{Local linear transport on the hyperbolic space}
\label{section.localLinearTransport}
Let $X$ be a vector field on $\HH$.
We consider the following local transport Cauchy problem in divergence form:
\begin{equation}
\label{transpX}
\begin{cases}
\partial_t u(t,x) = -\dv(u(t) X)(x) , & x\in \HH, t\geq 0;\\
u(0,x)=u_0(x), & x\in \HH.
\end{cases}
\end{equation}
For the Euclidean case we refer to \cite[Ch. 3]{bahouri} where, besides the classical theory of Lipschitz vector fields, the authors consider an extension to less regular vector fields i.e. log-Lipschitz. For clarity we consider here the case of $C^1$-vector fields, even though the results can be easily extended to the Lipschitz case.

\subsection{Existence and uniqueness for the local problem}
First, we recall a standard result that guarantees the existence of classical solutions for the problem \eqref{transpX}, in the more regular case of a bounded $C^2$ vector field $X$ and for initial data $u_0\in C^1(\HH)$.
\begin{proposition}
\label{transpXExistence}
Let $X$ be a bounded $C^2$ vector field on $\HH$. 
For $u_0\in C^1(\HH)$, the problem \eqref{transpX} admits a classical solution in $C^1(\RR\times \HH)$.
\end{proposition}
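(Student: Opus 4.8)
The plan is to solve \eqref{transpX} explicitly by the method of characteristics. First I would expand the divergence using the Leibniz rule, $\dv(uX) = u\,\dv(X) + \nabla u\cdot X$, and recall that $\nabla u \cdot X = X(u)$ is the directional derivative of $u$ along $X$. Thus \eqref{transpX} is equivalent to a transport equation with a zeroth-order reaction term,
\[\partial_t u + X(u) = -\dv(X)\,u.\]

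The characteristics are the integral curves of $X$. Since $X$ is $C^2$ it is in particular locally Lipschitz, so the Picard--Lindel\"of theorem yields a unique local flow; and since $X$ is bounded and $\HH$ is complete, every integral curve has speed bounded by $\sup|X|$, hence over any finite time interval it stays inside a closed (compact, by Hopf--Rinow) metric ball and cannot escape to infinity in finite time. Therefore the flow $(\Psi_t)_{t\in\RR}$, defined by $\tfrac{d}{dt}\Psi_t(x) = X(\Psi_t(x))$ and $\Psi_0 = \mathrm{id}$, is a one-parameter group of diffeomorphisms of $\HH$, and by the smooth-dependence-on-initial-conditions theorem for a $C^2$ field, the map $(t,x)\mapsto \Psi_t(x)$ is of class $C^2$.

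Along a characteristic I would set $v(t):= u(t,\Psi_t(x_0))$; then the chain rule together with the reformulated equation gives the scalar linear ODE $v'(t) = -\dv(X)(\Psi_t(x_0))\,v(t)$ with $v(0)=u_0(x_0)$. Solving it and writing $x=\Psi_t(x_0)$, i.e.\ $x_0=\Psi_{-t}(x)$, produces the explicit candidate
\[u(t,x) = u_0(\Psi_{-t}(x))\,\exp\!\left(-\int_{-t}^{0}\dv(X)(\Psi_\tau(x))\,\dd\tau\right).\]
It then remains to verify that this $u$ is a classical solution: the initial condition holds at $t=0$, and retracing the characteristic computation (or differentiating the formula directly) recovers the PDE. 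For the regularity, since $X\in C^2$ the flow is $C^2$ and $\dv(X)\in C^1$, so $u_0\circ\Psi_{-t}$ is $C^1$ (composition of the $C^1$ datum with the $C^2$ flow) and the exponential factor is $C^1$ jointly in $(t,x)$; hence $u\in C^1(\RR\times\HH)$, which is precisely the regularity needed for $\dv(uX)$ and $\partial_t u$ to be continuous.

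The only genuinely geometric point, and the step I would treat most carefully, is the global-in-time existence of the flow on the non-compact manifold $\HH$: this is where the boundedness of $X$ combined with the completeness of $\HH$ is essential, since on an incomplete manifold the characteristics could reach the boundary in finite time. Everything else is standard linear ODE theory together with the regularity bookkeeping for the composition $u_0\circ\Psi_{-t}$ and the integral factor. I note that the same characteristic representation also yields uniqueness, although only existence is asserted in the statement.
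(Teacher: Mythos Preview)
Your proposal is correct and follows essentially the same approach as the paper: both expand $\dv(uX)$, reduce to the linear ODE $v'=-\dv(X)v$ along the integral curves of $X$, invoke boundedness of $X$ and completeness of $\HH$ for global existence of the flow, and arrive at the same explicit formula $u(t,x)=u_0(\Psi_{-t}(x))\exp\!\big(-\int_0^t\dv(X)(\Psi_{-\tau}(x))\,\dd\tau\big)$. The paper packages the characteristics slightly differently by introducing the vector field $Y=\partial_t+X$ on $\RR\times\HH$ and its flow $\Phi_t^Y(0,x)=(t,\Phi_t^X(x))$, but this is only a notational variant of your argument; your more careful discussion of global flow existence via Hopf--Rinow and of the $C^1$ regularity bookkeeping in fact fills in details the paper leaves implicit.
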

\begin{proof}
Since $X$ is a bounded and locally Lipschitz vector field on the complete manifold $\HH$, its flow, denoted by $(\Phi_t^X)_{t\in \RR}$, is defined for all $t\in\RR$.
If we denote $\alpha(x):=\dv(X)(x)$, the equation becomes:
\[\partial_t u(t,x)=-\alpha(x) u(t,x)-\nabla u(t,x) \cdot X(x).\]
Hence in terms of the vector field $Y:=\partial_t+X$ on $\RR\times \HH$, the equation can be written as:
\[Y(u)(t,x)=-\alpha(x)u(t,x).\]
Now, $Y$ is also bounded and $C^2$ on the complete Riemannian manifold $\RR\times \HH$, therefore the above equation has a unique solution given in terms of the flow $(\Phi^Y_t)_{t\in \RR}$, starting from the non-characteristic hypersurface $\{0\}\times\HH$, by:
\[u\left(\Phi_t^Y(0,x)\right)=\exp\left({-\int_0^t \alpha\left(\Phi_\tau^Y(0,x) \right) \dd \tau }\right) u(0,x).\]
Since $\Phi_t^Y(0,x)=\left(t,\Phi_t^X(x)\right)$, it follows that:
\begin{equation}
\label{transpXExplicitSol}
u(t,x)=\exp\left({-\int_0^t \alpha\left(\Phi_{-\tau}^X(x) \right) \dd \tau }\right) u_0(\Phi_{-t}^X(x)),
\end{equation}
which is indeed a $C^1$ solution of \eqref{transpX}.
\end{proof}

Next, we introduce the definition of weak solutions for the problem \eqref{transpX} and we prove the existence of such solutions for $L^2_\loc$ initial data. The concept of weak solution for the transport problem \eqref{transpX} will be useful for identifying the limit in the convergence result, i.e., in the proof of Theorem \ref{transportConvergence.intro}.
\begin{definition}
\label{def.weakTranspX}
Let $u_0\in L^2_\loc (\HH)$ and $X$ a bounded $C^1$ vector field on $\HH$. We call $u\in L^2_\loc ([0,\infty)\times\HH)$ a weak solution of \eqref{transpX} if, for every $\varphi\in C_c^1([0,\infty)\times \HH)$,
\begin{equation}
\label{weakTranspX}
\int_0^\infty \int_\HH u(t,x)\left[\partial_t\varphi(t,x)+X(\varphi(t))(x)\right]\dmu(x) \dd t=-\int_\HH u_0(x) \varphi(0,x)\dmu(x).
\end{equation}
\end{definition}
\begin{remark}
An integration by parts argument implies that, if $X\in C^2(\HH,T\HH)$, then classical solutions of \eqref{transpX} are also weak solutions.
\end{remark}
\begin{proposition}
\label{existenceWeakSol} Let $X$ be a bounded $C^1$ vector field.
If $u_0\in L^2_\loc (\HH)$, then the function $u$ given by \eqref{transpXExplicitSol} is in $L^2_\loc ([0,\infty)\times\HH)$ and is a weak solution of \eqref{transpX}.
\end{proposition}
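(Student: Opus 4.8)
The plan is to verify both claims by transporting everything onto the flow of $X$ via a change of variables, so that the possibly non-smooth function $u$ is never differentiated directly. Write $\Phi_t:=\Phi_t^X$ for the flow, which is defined for all $t\in\RR$ and is $C^1$ in $(t,x)$ because $X$ is bounded and $C^1$. The starting point is Liouville's formula: the Jacobian $J_t(y)$ of $\Phi_t$ with respect to the Riemannian volume $\dmu$ satisfies $\frac{\dd}{\dd t}\log J_t(y)=\alpha(\Phi_t(y))$ with $J_0\equiv 1$, where $\alpha=\dv(X)$; hence $J_t(y)=\exp\big(\int_0^t\alpha(\Phi_s(y))\,\dd s\big)$, and under the substitution $x=\Phi_t(y)$ one has $\dmu(x)=J_t(y)\,\dmu(y)$. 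Combining this with the reparametrisation $\Phi_{-\tau}(\Phi_t(y))=\Phi_{t-\tau}(y)$ inside the exponential weight of \eqref{transpXExplicitSol} gives the clean identity
\[u(t,\Phi_t(y))=J_t(y)^{-1}u_0(y),\]
which is precisely the statement that $u(t)\dmu$ is the push-forward of $u_0\dmu$ along $\Phi_t$, reflecting the conservation (divergence) form of the equation.

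For the $L^2_\loc$ bound I would fix $T>0$ and a compact set $K\subset\HH$. Since $|X|$ is bounded, $\Phi_{-t}(x)$ stays, for $(t,x)\in[0,T]\times K$, inside a fixed compact enlargement $K'$ of $K$ (closed bounded sets in the complete space $\HH$ are compact). On $K'$ the continuous functions $\alpha$ and $J_t$ are bounded, so the exponential weight in \eqref{transpXExplicitSol} is bounded, and the change of variables $y=\Phi_{-t}(x)$ turns $\int_K|u_0(\Phi_{-t}(x))|^2\,\dmu(x)$ into $\int_{\Phi_{-t}(K)}|u_0|^2\,J_t\,\dmu\le C\,\|u_0\|_{L^2(K')}^2$, using $\Phi_{-t}(K)\subset K'$. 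Integrating in $t\in[0,T]$ yields $u\in L^2([0,T]\times K)$, hence $u\in L^2_\loc([0,\infty)\times\HH)$.

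For the weak formulation, set $\psi:=\partial_t\varphi+X(\varphi)$ for a test function $\varphi\in C_c^1([0,\infty)\times\HH)$. Applying the change of variables $x=\Phi_t(y)$ together with the identity above, the two Jacobian factors cancel and the inner integral $\int_\HH u(t,x)\psi(t,x)\,\dmu(x)$ becomes $\int_\HH u_0(y)\,\psi(t,\Phi_t(y))\,\dmu(y)$. The crucial point is that $\psi$ read along the flow is a total time derivative:
\[\psi(t,\Phi_t(y))=(\partial_t\varphi)(t,\Phi_t(y))+X(\varphi(t))(\Phi_t(y))=\frac{\dd}{\dd t}\big[\varphi(t,\Phi_t(y))\big],\]
the middle equality being the chain rule for $t\mapsto\varphi(t,\Phi_t(y))$ along the integral curve $\dot\Phi_t(y)=X(\Phi_t(y))$. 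After justifying Fubini (legitimate since $\varphi$ has compact support and $u_0$ is locally integrable, so the $y$-integral runs over a fixed compact set for $t$ in the support), the $t$-integral is evaluated by the fundamental theorem of calculus: for each fixed $y$, $\int_0^\infty\frac{\dd}{\dd t}[\varphi(t,\Phi_t(y))]\,\dd t=-\varphi(0,y)$, because $\varphi$ vanishes for large $t$ and $\Phi_0=\mathrm{id}$. This reproduces exactly the right-hand side $-\int_\HH u_0(y)\varphi(0,y)\,\dmu(y)$ of \eqref{weakTranspX}.

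The main obstacle I anticipate is bookkeeping at the level of regularity and integrability rather than any deep difficulty: justifying Liouville's Jacobian formula and the ensuing change of variables for a merely $C^1$ vector field, for which $\alpha=\dv(X)$ is only continuous and a priori unbounded on the non-compact space $\HH$, and checking that the local integrability of $u_0$ combined with the compact support of $\varphi$ renders every integral absolutely convergent so that Fubini and the substitutions are valid. All of these points are settled by localizing to the compact sets $K'$ swept out by the flow over the (compact) support of $\varphi$, where $\alpha$ and $J_t$ are automatically bounded.
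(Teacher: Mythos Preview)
Your proposal is correct and follows essentially the same route as the paper: both arguments hinge on Liouville's formula $J_{\Phi_t}(y)=\exp\big(\int_0^t\alpha(\Phi_\tau(y))\,\dd\tau\big)$ and the change of variables $x=\Phi_t(y)$, first to obtain the $L^2_\loc$ bound on compact sets (via the compact enlargement swept by the flow) and then to verify the weak formulation. The paper is terser on the second point, merely saying that the same change of variables yields \eqref{weakTranspX}; your explicit identification of $(\partial_t\varphi+X(\varphi))(t,\Phi_t(y))$ as the total $t$-derivative of $\varphi(t,\Phi_t(y))$, followed by the fundamental theorem of calculus, is exactly what that step amounts to.
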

\begin{proof} First, we prove that $u\in L^2_\loc ([0,T)\times\HH)$.
Let $K$ be a compact set in $\HH$ and $T>0$. Taking the $L^2$ norm in \eqref{transpXExplicitSol}, we obtain:
\[\|u(t)\|_{L^2(K)}^2 =\int_K e^{-2\int_0^t \alpha\left(\Phi_{-\tau}^X(x) \right) \dd \tau } |u_0(\Phi_{-t}^X(x))|^2\dmu(x).\]
Changing variables $x=\Phi_{t}^X(y)$, we get that:
\[\|u(t)\|_{L^2(K)}^2 = \int_{\Phi_{t}^X(K)} e^{-2\int_0^t \alpha\left(\Phi_{t-\tau}^X(y) \right) \dd \tau } |u_0(y)|^2 J_{\Phi_{t}^X}(y)\dmu(y).
\]
Next, by Liouville's formula (\cite[Proposition 2.18, p.~152]{chicone}), the determinant of the Jacobian of $\Phi_t^X$ satisfies the following ODE:
\[\frac{d}{\dd t}J_{\Phi_{t}^X}(y)=\dv(X)({\Phi_{t}^X(y)})J_{\Phi_{t}^X}(y).\]
Therefore, we obtain  $J_{\Phi_{t}^X}(y)=e^{\int_0^t \alpha\left(\Phi_{\tau}^X(y) \right) \dd \tau }$ and, thus,
\[\|u(t)\|_{L^2(K)}^2 = \int_{\Phi_{t}^X(K)} e^{-\int_0^t \alpha\left(\Phi_{t-\tau}^X(y) \right) \dd \tau } |u_0(y)|^2 \dmu(y),
\]
which is bounded for $t\in [0,T]$, since the vector field $X$ is $C^1$ and bounded.
Finally, by the change of variables $x=\Phi_{t}^X(y)$ and using the above computation for its Jacobian, we obtain that $u$ satisfies \eqref{weakTranspX}. 
\end{proof}

We now prove a uniqueness result concerning weak solutions, similar to the Euclidean case in \cite[Section 2.1]{evans}.
\begin{theorem}
\label{transpXUniqueness}
Let $X$ be a $C^1$ vector field. For any $u_0$ in  $L^2_\loc (\HH)$, there exists at most one weak solution of \eqref{transpX}, in the sense of Definition \ref{def.weakTranspX}.
\end{theorem}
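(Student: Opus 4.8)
The plan is to argue by duality, following the method of characteristics as in \cite[Section 2.1]{evans}. Since the weak formulation \eqref{weakTranspX} is linear in $u$ and its right-hand side is linear in $u_0$, it suffices to show that any weak solution $u\in L^2_\loc([0,\infty)\times\HH)$ corresponding to $u_0=0$ vanishes almost everywhere. Fixing an arbitrary $T>0$, I would show that $\int_0^\infty\int_\HH u\,\psi\,\dmu\,\dd t=0$ for every $\psi\in C_c^1((0,T)\times\HH)$; letting $T\to\infty$, these $\psi$ exhaust $C_c^\infty((0,\infty)\times\HH)$, so by the fundamental lemma of the calculus of variations this forces $u=0$ a.e.

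The core of the argument is to realise each such $\psi$ as $\partial_t\varphi+X(\varphi)$ for an admissible test function $\varphi\in C_c^1([0,\infty)\times\HH)$. To this end I would solve the backward transport equation $\partial_t\varphi+X(\varphi)=\psi$ with the terminal condition $\varphi(t,\cdot)\equiv 0$ for $t\geq T$. Using the flow $(\Phi_t^X)_{t\in\RR}$ of $X$, which is complete because $X$ is bounded and $C^1$ on the complete manifold $\HH$, exactly as in the proof of Proposition \ref{transpXExistence}, the operator $\partial_t+X$ is precisely the derivative along the characteristics $t\mapsto(t,\Phi^X_t(x_0))$, and the unique such $\varphi$ is given explicitly by
\[\varphi(t,x)=-\int_t^T \psi\big(\tau,\Phi^X_{\tau-t}(x)\big)\,\dd\tau.\]
Differentiating under the integral sign and using that $X$ is invariant under its own flow, i.e.\ $\dd_x\Phi^X_{s}(X(x))=X(\Phi^X_{s}(x))$, one checks that $(\partial_t+X)\big[\Phi^X_{\tau-t}(x)\big]=0$, so that the boundary term from differentiating the upper limit yields exactly $\psi(t,x)$ and confirms $\partial_t\varphi+X(\varphi)=\psi$; the $C^1$-dependence of $\Phi^X$ on $(s,x)$ makes $\varphi$ of class $C^1$.

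It then remains to verify that $\varphi$ is an admissible test function, that is, compactly supported in $[0,\infty)\times\HH$. In time this is immediate, since $\varphi(t,\cdot)\equiv 0$ for $t\geq T$ by construction. In space, $\varphi(t,x)\neq 0$ forces $\Phi^X_{\tau-t}(x)\in K$ for some $\tau\in[t,T]$, where $K\subset\HH$ is a compact set containing the spatial projection of $\operatorname{supp}\psi$; equivalently $x\in\bigcup_{s\in[0,T]}\Phi^X_{-s}(K)$, which is the image of the compact set $[0,T]\times K$ under the continuous map $(s,x)\mapsto\Phi^X_{-s}(x)$ and is therefore compact. Substituting this $\varphi$ into \eqref{weakTranspX} with $u_0=0$ then gives $\int_0^\infty\int_\HH u\,\psi\,\dmu\,\dd t=-\int_\HH u_0\,\varphi(0,\cdot)\,\dmu=0$, which completes the argument.

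The step I expect to be the main obstacle is the rigorous verification that the explicitly constructed $\varphi$ is genuinely $C^1$ up to the terminal time and has compact support: both properties hinge on the completeness of the flow $\Phi^X$ and its smooth dependence on initial conditions, which is precisely where the boundedness of $X$ built into Definition \ref{def.weakTranspX} enters in an essential way. Everything else reduces to the change-of-variables and regularity bookkeeping already encountered in Proposition \ref{existenceWeakSol}.
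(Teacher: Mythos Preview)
Your proposal is correct and follows essentially the same duality argument as the paper: reduce to $u_0=0$, solve the backward equation $\partial_t\varphi+X(\varphi)=\psi$ with terminal condition $\varphi(T,\cdot)=0$ via the explicit formula $\varphi(t,x)=-\int_t^T\psi(\tau,\Phi^X_{\tau-t}(x))\,\dd\tau$, and use boundedness of $X$ to ensure $\varphi\in C_c^1([0,T)\times\HH)$. Your treatment is in fact more explicit than the paper's on the compact-support verification, which the paper asserts in one line.
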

\begin{proof}
It is enough to prove that, if $u\in L^2_\loc ([0,\infty)\times\HH)$ is such that for every $T>0$ and every $\varphi\in C_c^1([0,T)\times \HH)$,
\begin{equation} \label{uperpfi}
\int_0^T \int_\HH u(t,x) \left[\partial_t\varphi(t,x)+X(\varphi(t))(x)\right]\dmu(x) \dd t=0,\end{equation}
then $u\equiv 0$. We achieve this by considering an arbitrary $T>0$ and $f\in C_c^{\infty}((0,T)\times \HH)$, and solving the following final value problem:
\begin{equation}
\label{eq.varphi}
\left\{\begin{aligned}
&\partial_t \varphi(t,x)+X(\varphi(t))(x)=f(t,x)  ,& x\in \HH, t\in [0,T];\\
&\varphi(T,x)=0  ,& x\in \HH.
\end{aligned}\right.
\end{equation}
The solution of \eqref{eq.varphi} can be constructed explicitly by the method of characteristic curves:
\[\varphi(t,x)=-\int_t^T f(\tau,\Phi_{\tau-t}^X(x)) \dd \tau.\]
Since $X$ is $C^1$ and bounded, it follows that $\varphi\in C_c^1([0,T)\times \HH)$, so we can apply \eqref{uperpfi}:
\[0=(u, \partial_t \varphi+X(\varphi))_{L^2((0,T)\times \HH)} = (u,f)_{L^2((0,T)\times \HH)}.
\]

The conclusion follows since $C_c^{\infty}((0,\infty)\times \HH)$ is dense in $L^2((0,\infty)\times \HH)$.
\end{proof}

\section{Non-local linear transport on the hyperbolic space}
\label{section:nonlocalLinearTransport}
This section is dedicated to the study of the non-local transport problem \eqref{IntroNonlocalTransp}. More exactly, we prove that the problem is well-posed, for a more relaxed assumption on the kernel $G$ and then we return to the setting of Hypotheses \ref{G1.intro} - \ref{G3.intro} to obtain $L^2$-norm decay for the solutions.

\begin{theorem}
\label{nonLocalExistence}
Let $G:\HH\times\HH\to [0,\infty)$ be a positive measurable function satisfying 
\begin{equation}
\label{condition.integrabilityG}
\sup_{x\in \HH}\left[ \int_\HH G(x,y) \dmu(y) + \int_\HH G(y,x) \dmu(y)\right]<\infty.
\end{equation}
Then, for every $u_0\in L^2(\HH)$, there exists a unique solution $u\in C^\infty([0,\infty),L^2(\HH))$ of the non-local transport problem \eqref{IntroNonlocalTransp}.
Moreover, if we assume further that $G$ is a dissipative kernel, namely:
\begin{equation}\label{GSymmetricintegral.eq}
\int_{\HH} [G(x,y)-G(y,x)]\dmu(x)=0,
\end{equation}
the norm $\|u(t)\|_{L^2(\HH)}$ does not increase in time.
\end{theorem}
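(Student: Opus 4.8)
The plan is to prove existence and uniqueness via the theory of bounded linear operators on the Hilbert space $L^2(\HH)$, and then obtain the $L^2$-norm decay through an energy estimate that exploits the dissipativity condition \eqref{GSymmetricintegral.eq}. First I would define the operator
\[
(\mathcal{L}u)(x)=\int_\HH G(x,y)(u(y)-u(x))\dmu(y)
=\int_\HH G(x,y)u(y)\dmu(y)-\left(\int_\HH G(x,y)\dmu(y)\right)u(x).
\]
The second term is multiplication by the function $a(x):=\int_\HH G(x,y)\dmu(y)$, which by \eqref{condition.integrabilityG} is bounded, hence a bounded operator on $L^2(\HH)$. For the first term, the integral operator $\mathcal{K}u(x)=\int_\HH G(x,y)u(y)\dmu(y)$ is bounded on $L^2(\HH)$ by a Schur test: writing $|\mathcal{K}u(x)|\leq\int_\HH G(x,y)^{1/2}\cdot G(x,y)^{1/2}|u(y)|\dmu(y)$ and applying Cauchy-Schwarz gives $|\mathcal{K}u(x)|^2\leq a(x)\int_\HH G(x,y)|u(y)|^2\dmu(y)$; integrating in $x$ and using Fubini together with the two uniform bounds in \eqref{condition.integrabilityG} yields $\|\mathcal{K}u\|_{L^2}^2\leq \big(\sup_x a(x)\big)\big(\sup_y\int_\HH G(x,y)\dmu(x)\big)\|u\|_{L^2}^2$. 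Thus $\mathcal{L}=\mathcal{K}-a$ is a bounded linear operator on $L^2(\HH)$.

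Once $\mathcal{L}$ is bounded, existence and uniqueness are immediate: the Cauchy problem becomes the linear ODE $u'(t)=\mathcal{L}u(t)$, $u(0)=u_0$ in the Banach space $L^2(\HH)$, whose unique solution is $u(t)=e^{t\mathcal{L}}u_0$ given by the norm-convergent operator exponential. Smoothness in time, $u\in C^\infty([0,\infty),L^2(\HH))$, follows automatically since $t\mapsto e^{t\mathcal{L}}$ is real-analytic in the operator norm.

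For the norm decay I would compute $\frac{\dd}{\dd t}\|u(t)\|_{L^2}^2=2(\mathcal{L}u(t),u(t))_{L^2}$ and show that $(\mathcal{L}v,v)_{L^2}\leq 0$ for all $v\in L^2(\HH)$ when $G$ is dissipative. Expanding,
\[
(\mathcal{L}v,v)_{L^2}=\int_\HH\int_\HH G(x,y)\big(v(y)-v(x)\big)v(x)\dmu(y)\dmu(x).
\]
The standard trick is to symmetrize: swapping the roles of $x$ and $y$ in the double integral and averaging the two expressions produces a term involving $\tfrac12\iint G(x,y)(v(y)-v(x))(v(x)-v(y))$, i.e. $-\tfrac12\iint G(x,y)(v(y)-v(x))^2$, plus a remainder term that measures the failure of $G$ to be symmetric. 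That remainder collects into $\tfrac12\iint\big(G(x,y)-G(y,x)\big)\big(\text{quadratic in }v\big)$; after expanding the quadratic, the only surviving piece is controlled by $\int_\HH\big(\int_\HH (G(x,y)-G(y,x))\dmu(x)\big)\cdot(\text{something in }v(y))\dmu(y)$, which vanishes identically by the dissipativity hypothesis \eqref{GSymmetricintegral.eq}. What remains is $(\mathcal{L}v,v)_{L^2}=-\tfrac12\iint G(x,y)(v(y)-v(x))^2\dmu(y)\dmu(x)\leq 0$, giving $\frac{\dd}{\dd t}\|u(t)\|_{L^2}^2\leq 0$ and hence monotone non-increase.

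The main obstacle will be the bookkeeping in the symmetrization step: I must be careful that every double integral is absolutely convergent so that Fubini's theorem and the change of variables $x\leftrightarrow y$ are legitimate. This is where condition \eqref{condition.integrabilityG} does genuine work beyond boundedness — it guarantees that $\iint G(x,y)|v(y)-v(x)|^2\dmu(y)\dmu(x)<\infty$ and that the cross terms split cleanly — and where the precise form of the dissipativity identity \eqref{GSymmetricintegral.eq} (integration in $x$ for fixed $y$) must be matched exactly against the remainder term produced by symmetrization. I expect the energy computation itself to be routine once the integrability justifying these manipulations is in hand.
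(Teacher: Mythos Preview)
Your proposal is correct and follows essentially the same route as the paper: boundedness of the operator on $L^2(\HH)$ via a Cauchy--Schwarz/Schur argument (the paper bounds $L_G$ directly rather than splitting it as $\mathcal{K}-a$, but the estimate is the same), existence and smoothness via the operator exponential, and the energy decay by the symmetrization trick that reduces $(\mathcal{L}v,v)$ to $-\tfrac12\iint G(x,y)(v(y)-v(x))^2$ once the dissipativity identity \eqref{GSymmetricintegral.eq} kills the asymmetric remainder. The paper organizes the symmetrization slightly differently---it shows directly that $\iint G(x,y)v(x)^2=\iint G(x,y)v(y)^2$ and hence that the two forms of the energy integral coincide---but this is the same computation as yours, just with the bookkeeping arranged in a different order.
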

\begin{remark}
    \label{remark:dissipativeKernel}
    The second statement of this theorem is the reason for which we called $G$ a \textit{dissipative kernel}: the associated evolution problem \eqref{IntroNonlocalTransp} dissipates the $L^2$ energy.
\end{remark}
\begin{proof}
By assumption \eqref{condition.integrabilityG}, there exists some $M>0$ such that, for every $x\in \HH$,
\begin{equation}\label{eq:intGBoundedM}
\int_\HH G(x,y) \dmu(y)\leq M\text{ and }\int_\HH G(y,x) \dmu(y)\leq M.
\end{equation}
It is enough to prove that the operator $L_G$ defined by:
\[L_G(v)(x)\coloneqq\int_{\HH}G(x,y)(v(y)-v(x)) \dmu(y)\]
is a bounded operator on $L^2(\HH)$ and then the problem \eqref{IntroNonlocalTransp} admits a unique solution $u=e^{tL_G}u_0$.

Indeed, for $v\in L^2(\HH)$, we can write:
\[
L_G(v)(x) = L_G^\#(v)(x)- v(x) \int_{\HH} G(x,y) \dmu(y),
\]
where the first term above is defined as follows:
\[
L_G^\#(v)(x) \coloneqq \int_{\HH} G(x,y) v(y) \dmu(y).
\]
By \eqref{eq:intGBoundedM}, we only need to show that the operator $L_G^\#$ is bounded on $L^2(\HH)$. In this sense, we use H\" older's inequality, together with Tonelli's theorem (recall that $G$ is a non-negative function) and obtain:
\begin{align*}
   \|L^\#_G(v)\|^2_{L^2(\HH)} &\leq\int_{\HH}\left[\int_{\HH} G(x,y) \dmu(y) \int_{\HH} G(x,y)v(y)^2 \dmu(y)\right]\dmu(x)\\
   &\leq M \int_{\HH} \int_{\HH} G(x,y)v(y)^2 \dmu(y)\dmu(x)\\
   &\leq M^2 \int_{\HH} v(y)^2 \dmu(y).
\end{align*}
Therefore, the operators $L_G^\#$ and (as a consequence) $L_G$ are well-defined and bounded on $L^2(\HH)$.

For the second statement, we multiply the equation \eqref{IntroNonlocalTransp} by $u$ and integrate:
\[\frac{d}{dt} \left(\frac 12\int_\HH |u(t,x)|^2\dmu(x)\right)=\int_{\HH\times\HH} G(x,y)(u(t,y)-u(t,x))u(t,x) \dmu(y) \dmu(x).\]
We claim that the integral in the right-hand side above equals:
\[\int_{\HH\times\HH} G(x,y)(u(t,x)-u(t,y))u(t,y) \dmu(y) \dmu(x).\]
Indeed, it is enough to notice that, using Tonelli's theorem and the equality \eqref{GSymmetricintegral.eq}, we arrive at:
\[\int_{\HH\times\HH} G(x,y)u(t,x)^2 \dmu(y) \dmu(x) = \int_{\HH\times\HH} G(x,y)u(t,y)^2 \dmu(y) \dmu(x),
\]
which implies the claim above. As a consequence of that statement, we obtain the desired decay estimate:
\[\frac{d}{dt} \|u(t)\|^2_{L^2(\HH)}=-\int_{\HH\times\HH} G(x,y)(u(t,x)-u(t,y))^2 \dmu(y) \dmu(x)\leq 0.\qedhere\]
\end{proof}

Now we prove that that the conditions we imposed for $G$ in the Introduction are sufficient for \eqref{GSymmetricintegral.eq} to hold. More precisely, we have the following:

\begin{proposition}
\label{GSymmetricIntegral}
If $G$ satisfies Hypotheses \ref{G1.intro} and \ref{G2.intro}, then, for almost every $y\in \HH$, \eqref{GSymmetricintegral.eq} is satisfied.
\end{proposition}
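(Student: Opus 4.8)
The plan is to reduce the integral over $\HH$ to an integral over the single tangent space $T_y\HH$ via the exponential map, and then to exploit the antipodal symmetry $W\mapsto -W$ there. The heart of the argument is a geometric identity relating $G(x,y)$ and $G(y,x)$ through the geodesic-flow invariance of $\widetilde G$.

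First I would fix distinct $x,y\in\HH$ and set $W:=V_{y,x}\in T_y\HH$, so that $x=\exp_y(W)$ and the geodesic $\eta(t)=\gamma_{y,W}(t)$ runs from $y$ to $x$ in unit time. Reversing this geodesic shows $V_{x,y}=-\eta'(1)$, and a short computation with the flow gives $\Phi_1(y,W)=(x,\eta'(1))$ while $\Phi_1(x,V_{x,y})=(y,-W)$, since the velocity of the reversed geodesic at its endpoint $y$ is $-\eta'(0)=-W$. Invariance of $\widetilde G$ under $(\Phi_t)_{t\in\RR}$ then yields the clean pair of identities
\[G(y,x)=\widetilde G(y,W),\qquad G(x,y)=\widetilde G\big(\Phi_1(x,V_{x,y})\big)=\widetilde G(y,-W).\]
This is the step I expect to be the main obstacle: it requires correctly tracking how $V_{x,y}$ and $V_{y,x}$ are exchanged (up to sign) by the time-one flow, rather than any hard estimate.

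With this in hand, I would change variables $x=\exp_y(W)$ in the target integral. Since $\HH$ has infinite injectivity radius, $\exp_y\colon T_y\HH\to\HH$ is a diffeomorphism, and Lemma \ref{diffExp} supplies the Jacobian factor $\left(\sinh|W|/|W|\right)^{N-1}$, so that
\[\int_\HH [G(x,y)-G(y,x)]\dmu(x)=\int_{T_y\HH}\big[\widetilde G(y,-W)-\widetilde G(y,W)\big]\left(\frac{\sinh|W|}{|W|}\right)^{N-1}\dmu(W).\]
Both the Jacobian and the volume $\dmu(W)$ on $T_y\HH$ depend on $W$ only through $|W|$, and the antipodal map $W\mapsto -W$ is a measure-preserving involution of $T_y\HH$ preserving $|W|$. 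Hence the two terms integrate to the same value, provided each of the two non-negative integrals is finite.

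Finiteness is precisely where Hypothesis \ref{G2.intro} enters: writing $\dmu(W)=r^{N-1}\dd r\,d\omega$ in polar coordinates and bounding $\widetilde G(y,r\omega)\le k_{\widetilde G}(r)$ collapses the Jacobian and leaves $\Vol(\mathbb{S}^{N-1})\int_0^\infty k_{\widetilde G}(r)(\sinh r)^{N-1}\dd r$, which is dominated by $M(\widetilde G)<\infty$. Since the two non-negative integrals are finite and, by the $W\mapsto -W$ symmetry, equal, their difference vanishes, establishing \eqref{GSymmetricintegral.eq} for every (hence almost every) $y\in\HH$.
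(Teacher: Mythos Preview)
Your proof is correct and follows essentially the same approach as the paper: both use the geodesic-flow invariance to rewrite $G(x,y)$ as $\widetilde G(y,-V_{y,x})$, change variables to $T_y\HH$ via $\exp_y$, and conclude by the antipodal symmetry $W\mapsto -W$ since the Jacobian is radial. Your explicit verification of integrability via Hypothesis \ref{G2.intro}, justifying the split of the difference into two finite integrals, is a welcome detail that the paper leaves implicit.
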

\begin{proof}
First, we write the integral above in terms of $\widetilde{G}$:
\[I(y):=\int_{\HH} [G(x,y)-G(y,x)]\dmu(x)=\int_\HH \left[ \widetilde{G}(x,V_{x,y})-\widetilde{G}(y,V_{y,x})\right]\dmu(x).\]
Next, the invariance of $\widetilde{G}$ with respect to the geodesic flow (see Definitions \ref{defGeodesicFlow} and \ref{invariantToGF}) implies that:
\[\widetilde{G}(x,V_{x,y})=\widetilde{G}(\exp_x(V_{x,y}),\gamma_{x,V_{x,y}}'(1)).\]
The definition of $\gamma_{x,V_{x,y}}$ and the uniqueness of geodesics implies that $\gamma_{x,V_{x,y}}'(1)=-V_{y,x}$, so:
\[I(y)=\int_\HH \left[ \widetilde{G}(y,-V_{y,x})-\widetilde{G}(y,V_{y,x})\right]\dmu(x).\]
Using that the Jacobian determinant of the exponential mapping is symmetric (for its exact form, see Lemma \ref{diffExp}), we obtain:
\[I(y)=\int_{T_y\HH} \left[ \widetilde{G}(y,-V)-\widetilde{G}(y,V)\right]\left|J_{\exp_y}(V)\right| \dmu(V),\]
which vanishes using the change of variables $V\to -V$.
\end{proof}
\begin{remark}
A simple calculation implies that, if $G$ satisfies Hypotheses \ref{G1.intro} and \ref{G2.intro}, then:
\[\sup_{x\in \HH}\int_\HH G(x,y) \dmu(y)\leq M(\widetilde{G})\hspace{0.2cm}\text{ and }\sup_{x\in \HH}\int_\HH G(y,x) \dmu(y)\leq M(\widetilde{G}).\]
Therefore, the kernel $G$ also satisfies \eqref{condition.integrabilityG}, so Theorem \ref{nonLocalExistence} is true in the particular setting of $G$ satisfying all the hypotheses \ref{G1.intro}, \ref{G2.intro} and \ref{G3.intro} in the Introduction.
\end{remark}

\section{Relaxation limit for the transport problem}
\label{section:relaxationTransport}
In this section we focus on the family of rescaled problems \eqref{nonLocalEps.intro}. These rescaled problems are all instances of \eqref{IntroNonlocalTransp}, where the kernel
 $G$ is replaced by $G_\eps (x,y)=\eps^{-N-1}\widetilde{G}(x,\frac{1}{\eps}V_{x,y})$. 
If we define the rescaled operator $L_{G_\eps}$ on $L^2(\HH)$ by
\[L_{G_\eps}(\psi)(x)=\eps^{-N-1}\int_\HH \widetilde{G}\left(x,\frac{1}{\eps} V_{x,y}\right) (\psi(y)-\psi(x)) \dmu(y),
\]
then \eqref{nonLocalEps.intro} can be written as:
\begin{equation}
    \label{nonLocalEps}
    \begin{cases}
\partial_t u^\eps(t,x)=L_{G_\eps}(u^\eps(t))(x), & x\in \HH, t\geq 0 \\[5pt]
 u^\eps(0,x)=u_0(x), & x\in \HH.
    \end{cases}
\end{equation}
Notice that, if $\widetilde{G}$ is invariant with respect to the geodesic flow $(\Phi_t)_{t\in \RR}$, then so is the rescaled function:
\[
\widetilde{G}_\eps:T\HH\to \RR,\quad\widetilde{G}_\eps(x,V)=\eps^{-N-1}\widetilde{G}\left(x,\frac{1}{\eps}V\right),
\]
since $\gamma_{x,V}(t)=\eps\gamma_{x,\frac{1}{\eps}V}(t \eps)$. Therefore, we can apply Theorem \ref{nonLocalExistence} to obtain existence, uniqueness and $L^2$ norm decay of solutions of \eqref{nonLocalEps}. We are interested in the behavior of these solutions as $\eps\to 0$.

Before proving Theorem \ref{transportConvergence.intro}, we make the following remark concerning the limit local problem:
\begin{remark}
\label{XLeqMG}
Using Lemma \ref{diffExp}, we get that Hypothesis \ref{G2.intro} guarantees the integrability of the mapping $W\to \widetilde{G}(x,W) W$, for every $x\in \HH$. Therefore, the vector field $X_G$ defined in \eqref{G3.intro.eq} satisfies
\[\|X_G\|_{L^\infty(\HH)}\leq M(\widetilde{G}).\]
This, together with the regularity Hypothesis \ref{G3.intro}, implies that the existence and uniqueness results in Section \ref{section.localLinearTransport} can be applied for the limit problem \eqref{IntrotranspX}.
\end{remark}

The proof of Theorem \ref{transportConvergence.intro} requires the following two lemmas, regarding the adjoint of the operator $L_{G_\eps}$. In view of Proposition \ref{GSymmetricIntegral} and using the Fubini-Tonelli theorem\footnote{By the same reasoning as in the proof of Theorem \ref{nonLocalExistence},
\[\int_{\HH\times\HH} G(x,y) |\psi(x)\psi(y)|\dmu(x)\dmu(y)\leq \frac{1}{2}\int_{\HH\times\HH} G(x,y) (|\psi(x)|^2+|\psi(y)|^2)\dmu(x)\dmu(y)<\infty.\]}, this adjoint has the following expression:
\[L^*_{G_\eps}(\psi)(y)=\int_\HH G_\eps(x,y)(\psi(x)-\psi(y))\dmu(x),
\,\forall \psi\in L^2(\HH).\]

\begin{lemma}
\label{computations.LGEpsAdj}
Let $G$ satisfy Hypotheses \ref{G1.intro} and \ref{G2.intro}. For every function $\psi\in C_c^1(\HH)$,
\[
L^*_{G_\eps}(\psi)(y)  = -\int_{T_y\HH} \widetilde{G}\left(y, W\right)
W\cdot \int_0^1\nabla F_y(-\tau\eps W)\dd \tau \rho(\eps |W|) \dmu(W),
\]
where $F_y=\psi\circ \exp_y$ is the geodesic normal coordinates expression of $\psi$ around $y$ and 
\[\rho(r)=\left(\frac{\sinh(r)}{r}\right)^{N-1}.\]
\end{lemma}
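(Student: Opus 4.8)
The plan is to convert the integral defining $L^*_{G_\eps}(\psi)(y)$, which runs over $x\in\HH$, into an integral over the single tangent space $T_y\HH$, and then to expose the factor $\eps$ by a first-order Taylor expansion of $\psi$ in geodesic normal coordinates centred at $y$. Everything reduces to careful bookkeeping of the powers of $\eps$ once the kernel has been re-based at the fixed point $y$.

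First I would rewrite the kernel $G_\eps(x,y)=\widetilde{G}_\eps(x,V_{x,y})$ as a function of data living in $T_y\HH$. Exactly as in the proof of Proposition \ref{GSymmetricIntegral}, the geodesic-flow invariance of $\widetilde{G}$ (Definition \ref{invariantToGF}), together with the identity $\gamma'_{x,V_{x,y}}(1)=-V_{y,x}$, gives $\widetilde{G}(x,\tfrac{1}{\eps}V_{x,y})=\widetilde{G}(y,-\tfrac{1}{\eps}V_{y,x})$. This is the crucial step, since it lets the entire integrand be expressed through vectors based at $y$. Next I would perform the change of variables $x=\exp_y(W)$ with $W=V_{y,x}\in T_y\HH$; because $\HH$ has infinite injectivity radius, $\exp_y$ is a global diffeomorphism, and by Lemma \ref{diffExp} its Jacobian is $\rho(|W|)=(\sinh|W|/|W|)^{N-1}$, so that $\dmu(x)=\rho(|W|)\dmu(W)$. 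Writing $F_y=\psi\circ\exp_y$, the integral becomes
\[
L^*_{G_\eps}(\psi)(y)=\eps^{-N-1}\int_{T_y\HH}\widetilde{G}\Big(y,-\tfrac{1}{\eps}W\Big)\big(F_y(W)-F_y(0)\big)\,\rho(|W|)\,\dmu(W).
\]

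I would then rescale by the substitution $W=-\eps Z$, which contributes a factor $\eps^N$ from the volume form and turns $\rho(|W|)$ into $\rho(\eps|Z|)$, leaving an overall prefactor $\eps^{-1}$. Finally, the fundamental theorem of calculus applied to $F_y(-\eps Z)-F_y(0)$ gives $-\eps\,Z\cdot\int_0^1\nabla F_y(-\tau\eps Z)\,\dd\tau$, where $\nabla F_y$ is the gradient and ``$\cdot$'' the inner product on $(T_y\HH,g_y)$; the resulting $-\eps$ cancels the $\eps^{-1}$. Renaming $Z$ back to $W$ yields the stated formula.

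The computation itself is routine once the invariance identity is in hand, so the point to handle with care is the legitimacy of the manipulations. Since $\psi\in C_c^1(\HH)$ and $\exp_y$ is a diffeomorphism, $F_y$ is $C^1$ with compact support and $\nabla F_y$ is bounded, so the integrand $\widetilde{G}(y,W)\,(1+|W|)\,\rho(\eps|W|)$ is absolutely integrable on $T_y\HH$ by Hypothesis \ref{G2.intro} (for $\eps\le 1$ one has $\rho(\eps|W|)\le\rho(|W|)$, and the $(1+r)$ weight there is precisely what controls the first-moment term $W\cdot\nabla F_y$). This absolute integrability justifies the change of variables and Fubini's theorem for interchanging $\int_0^1$ with the integral over $T_y\HH$, completing the argument.
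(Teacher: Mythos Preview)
Your proof is correct and follows essentially the same route as the paper: re-base the kernel at $y$ via geodesic-flow invariance, change variables $x=\exp_y(W)$ picking up the Jacobian $\rho(|W|)$, rescale $W\mapsto -\eps W$, and apply the fundamental theorem of calculus. The only addition is your explicit justification of absolute integrability via Hypothesis~\ref{G2.intro}, which the paper leaves implicit.
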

\begin{proof}
By the definition of $G_\eps$,
\[L^*_{G_\eps}(\psi)(y)=\eps^{-N-1}\int_\HH \widetilde{G}\left(x,\frac{1}{\eps} V_{x,y}\right)(\psi(x)-\psi(y)) \dmu(x).\]
Using that $\widetilde{G}$ is constant along the orbits of the geodesic flow, we also have:
\[L^*_{G_\eps}(\psi)(y)=\eps^{-N-1}\int_\HH \widetilde{G}\left(y,-\frac{1}{\eps} V_{y,x}\right)(\psi(x)-\psi(y)) \dmu(x).\]
The change of variables $W=V_{y,x}$, that is $x=\exp_y(W)$, turns the above integral into:
\begin{equation}
\label{LGEpsAdjWithJacobian}
L^*_{G_\eps}(\psi)(y)=\eps^{-N-1}\int_{T_y\HH} \widetilde{G}\left(y,-\frac{1}{\eps} W\right)(\psi(\exp_y(W))-\psi(y)) |J_{\exp_y}(W)| \dmu(W).
\end{equation}
In  view of Lemma \ref{diffExp}, $J_{\exp_y}(W)=\rho(|W|)$.
Using this fact in \eqref{LGEpsAdjWithJacobian}, 
a change of variables $W\rightarrow -\eps W$ and the Fundamental Theorem of Calculus, we obtain:
\begin{equation}
\label{LGEpsStarV1}\begin{aligned}
L^*_{G_\eps}(\psi)(y) & =\eps^{-1}\int_{T_y\HH} \widetilde{G}\left(y, W\right)(F_y(-\eps W))-F_y(0))\rho(\eps |W|) \dmu(W) \\
& = -\int_{T_y\HH} \widetilde{G}\left(y, W\right)
W\cdot \int_0^1\nabla F_y(-\tau\eps W)\dd \tau \rho(\eps |W|) \dmu(W),
\end{aligned}
\end{equation}
which finishes the proof.
\end{proof}

\begin{lemma}
\label{H1L2Boundedness} Let $G$ satisfy Hypotheses \ref{G1.intro} and \ref{G2.intro}. The following holds uniformly in $\eps\in(0,1)$:
\[\|L_{G_\eps}^*(\psi)\|_{L^2(\HH)}\leq M(\widetilde{G}) \|\nabla \psi\|_{L^2(\HH)},\ \forall\psi\in H^1(\HH). \]
This means that $L_{G_\eps}^*:H^1(\HH)\to L^2(\HH)$ is a bounded operator with norm at most $M(\widetilde{G})$.
\end{lemma}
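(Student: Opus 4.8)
The plan is to start from the pointwise formula for $L^*_{G_\eps}(\psi)(y)$ proved in Lemma \ref{computations.LGEpsAdj} (valid for $\psi\in C_c^1(\HH)$), estimate its $L^2(\HH)$ norm by Minkowski's integral inequality, control the resulting shifted $L^2$-norms of $\nabla\psi$ via the change-of-variables bound of Lemma \ref{lemmaJacobianExpFixedV}, and finally pass from $C_c^1(\HH)$ to $H^1(\HH)$ by density. So first I would fix $\psi\in C_c^1(\HH)$ and, from Lemma \ref{computations.LGEpsAdj}, bound
\[
|L^*_{G_\eps}(\psi)(y)|\le\int_{T_y\HH}\widetilde{G}(y,W)\int_0^1\bigl|\nabla F_y(-\tau\eps W)\cdot W\bigr|\,\dd\tau\,\rho(\eps|W|)\,\dmu(W).
\]

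The first key point, and the one that makes the estimate sharp enough, is that the base point $-\tau\eps W$ and the tangent vector $W$ are collinear. Hence the radial case of Lemma \ref{diffExp} gives $\dd_{-\tau\eps W}(\exp_y)(W)=P(y,\exp_y(-\tau\eps W))(W)$, a parallel transport and thus a hyperbolic isometry. Combining the chain rule $\nabla F_y(-\tau\eps W)\cdot W=\nabla\psi(\exp_y(-\tau\eps W))\cdot \dd_{-\tau\eps W}(\exp_y)(W)$ with the Cauchy--Schwarz inequality yields the clean bound $|\nabla F_y(-\tau\eps W)\cdot W|\le|\nabla\psi(\exp_y(-\tau\eps W))|\,|W|$, with \emph{no} factor $\sinh(\tau\eps|W|)/(\tau\eps|W|)$ from the differential of $\exp_y$. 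Were $W$ not collinear with $-\tau\eps W$, that extra factor would appear and the final weight would fail to be dominated by the one defining $M(\widetilde{G})$ in low dimension, so exploiting collinearity here is essential.

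Next, to work over a domain independent of $y$ I would pass to the upper half-space model and substitute $W=y_N V$ with $V\in\RR^N$ Euclidean, so that $\dmu(W)=\dd V$, $|W|=|V|_e$, $\widetilde{G}(y,W)\le k_{\widetilde{G}}(|V|_e)$ and $\exp_y(-\tau\eps W)=\mathcal{T}_{-\tau\eps V}(y)$ in the notation of Lemma \ref{lemmaJacobianExpFixedV}. Minkowski's integral inequality in $(V,\tau)\in\RR^N\times[0,1]$ then gives
\[
\|L^*_{G_\eps}(\psi)\|_{L^2(\HH)}\le\int_{\RR^N}\int_0^1 k_{\widetilde{G}}(|V|_e)\,|V|_e\,\rho(\eps|V|_e)\,\bigl\|\nabla\psi(\mathcal{T}_{-\tau\eps V}(\cdot))\bigr\|_{L^2(\HH)}\,\dd\tau\,\dd V,
\]
and Lemma \ref{lemmaJacobianExpFixedV}, applied to $\Psi=|\nabla\psi|^2$ and the Euclidean vector $-\tau\eps V$, bounds the inner norm by $e^{\frac{N-1}{2}\tau\eps|V|_e}\|\nabla\psi\|_{L^2(\HH)}$.

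It then remains to verify, uniformly in $\eps\in(0,1)$, that the resulting weight is dominated by the one defining $M(\widetilde{G})$. Passing to polar coordinates and performing the elementary $\tau$-integration, this reduces to the scalar inequality
\[
s\left(\frac{\sinh(\eps s)}{\eps}\right)^{N-1}e^{\frac{N-1}{2}\eps s}\le(1+s)\,(e^{s}\sinh s)^{N-1},\qquad s>0,\ \eps\in(0,1).
\]
Since $s\le 1+s$, it suffices to prove $(\sinh(\eps s)/\eps)^{N-1}e^{\frac{N-1}{2}\eps s}\le(e^{s}\sinh s)^{N-1}$; taking $(N-1)$-th roots and setting $a=\eps s\in(0,s)$, this follows from the two elementary facts that $e^{a/2}\le e^{a}$ and that $x\mapsto \frac{e^{x}\sinh x}{x}=\frac{e^{2x}-1}{2x}$ is increasing on $(0,\infty)$, whence $\frac{\sinh a}{a}e^{a/2}\le\frac{e^{a}\sinh a}{a}\le\frac{e^{s}\sinh s}{s}$ (the case $N=1$ being immediate). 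This establishes $\|L^*_{G_\eps}(\psi)\|_{L^2(\HH)}\le M(\widetilde{G})\|\nabla\psi\|_{L^2(\HH)}$ for $\psi\in C_c^1(\HH)$. Finally, since $L^*_{G_\eps}$ is already bounded on $L^2(\HH)$ and $C_c^\infty(\HH)$ is dense in $H^1(\HH)$, I would approximate an arbitrary $\psi\in H^1(\HH)$ in the $H^1$ norm and pass to the limit in the inequality. I expect the main obstacle to be precisely the last quantitative check: ensuring that the negative-curvature volume distortion, carried jointly by the Jacobian factor $\rho$, the parallel-transport bound, and the non-measure-preserving shift $\mathcal{T}_{-\tau\eps V}$, still yields a weight controlled by $M(\widetilde{G})$ uniformly as $\eps\to 0$; the collinearity observation above is what keeps this bound intact across all dimensions.
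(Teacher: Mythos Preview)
Your proof is correct and follows the same structural line as the paper: reduce to $\psi\in C_c^1(\HH)$ by density, use the pointwise formula of Lemma \ref{computations.LGEpsAdj}, exploit the collinearity of $W$ and $-\tau\eps W$ to get the clean parallel-transport bound $|\nabla F_y(-\tau\eps W)\cdot W|\le |W|\,|\nabla\psi(\exp_y(-\tau\eps W))|$, pass to the half-space model via $W=y_N V$, and control the shifted integral with Lemma \ref{lemmaJacobianExpFixedV}.

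The one genuine difference is in how you pass from the pointwise bound to the $L^2$ bound. You use Minkowski's integral inequality, which leaves you with a single weight integral that you then have to dominate by $M(\widetilde{G})$ via the scalar inequality $s\bigl(\sinh(\eps s)/\eps\bigr)^{N-1}e^{(N-1)\eps s/2}\le (1+s)(e^s\sinh s)^{N-1}$. The paper instead applies H\"older's inequality in the $(W,\tau)$-variable, which splits the estimate into \emph{two} weight integrals: one carries only the Jacobian factor $\rho$ and is bounded by $\int k_{\widetilde G}(r)\,r\,\sinh^{N-1}(r)\,\dd r$ (using $\rho(\eps r)\le\rho(r)$), while the other absorbs the full exponential $e^{(N-1)|V|_e}$ coming from Lemma \ref{lemmaJacobianExpFixedV}; each factor is then bounded by $M(\widetilde G)$ directly, with no further scalar check needed. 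Your Minkowski route is slightly sharper in principle (it uses $e^{(N-1)\tau\eps s/2}$ rather than $e^{(N-1)s}$) but requires the extra verification; the paper's H\"older split is a bit coarser but more immediate. Either way the same constant $M(\widetilde{G})$ comes out.
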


\begin{proof}[Proof of Lemma \ref{H1L2Boundedness}]
By density and the boundedness of $L_{G_\eps}^*$ on $L^2(\HH)$, it is enough to prove the lemma for $\psi\in C_c^1(\HH)$.
By Lemma \ref{diffExp}, we obtain that:
\[
\begin{aligned}
\nabla F_y(-\tau\eps W)\cdot W &=\nabla\psi(\exp_y(-\tau\eps W)) \cdot \dd_{-\tau\eps W}(\exp_y)(W)\\
&=\nabla\psi(\exp_y(-\tau\eps W)) \cdot P(y,\exp_y(-\tau\eps W))(W),
\end{aligned}\]
where $P(y,x)$ is the parallel transport in $T\HH$ along the unique geodesic from $y$ to $x$ in $\HH$.
Plugging into Lemma \ref{computations.LGEpsAdj}, we obtain:
\[L^*_{G_\eps}(\psi)(y)=-\int_0^1\int_{T_y\HH} \widetilde{G}\left(y, W\right)P(y,\exp_y(-\tau\eps W))(W)\cdot\nabla\psi(\exp_y(-\tau\eps W))\rho(\eps |W|) \dmu(W) \dd \tau.\]
Since the parallel transport $P$ is an isometry,
\[\left|P(y,\exp_y(-\tau\eps W))(W)\cdot\nabla\psi(\exp_y(-\tau\eps W))\right|\leq |W| |\nabla \psi(\exp_y(-\tau\eps W))|.\]
Taking into account that $\widetilde{G}(x,W)\leq k_{\widetilde{G}}(|W|)$, we obtain:
\[|L^*_{G_\eps}(\psi)(y)|\leq\int_0^1\int_{T_y\HH} k_{\widetilde{G}}(|W|) |W||\nabla\psi(\exp_y(-\tau\eps W))|\rho(\eps |W|) \dmu(W) \dd \tau.\]
Next, we work on the half-space model and we transform the integral on $T_y\HH$ through the identification $T_y\HH\simeq \RR^N$, switching to Euclidean norms via the change of variables $V=\frac{1}{y_N} W$, where we notice that $|W|=|V|_e$. Therefore,
\[|L^*_{G_\eps}(\psi)(y)|\leq\int_0^1\int_{\RR^N} k_{\widetilde{G}}(|V|_e)|V|_e |\nabla\psi(\exp_y(-\tau\eps y_N V))|\rho(\eps |V|_e) \dd V \dd \tau.\]
Integrating on $\HH$, we get, via H\" older's inequality, that $\|L^*_{G_\eps}(\psi)\|_{L^2(\HH)}^2$ is bounded from above by:
\[\int_0^1 \int_\HH\left[ \int_{\RR^N}  k_{\widetilde{G}}(|V|_e)|V|_e\rho(\eps |V|_e) \dd V\cdot
\int_{\RR^N} k_{\widetilde{G}}(|V|_e)|V|_e\rho(\eps |V|_e) |\nabla\psi(\exp_y(-\tau\eps y_N V))|^2\dd V\right] \dd \mu(y) \dd \tau.\]
Since $\rho$ is an increasing function, we get for all $\eps\in(0,1)$ the inequality:
\begin{align*}
\int_{\RR^N} k_{\widetilde{G}}(|V|_e)|V|_e\rho(\eps |V|_e) \dd V
= & \Vol(\mathbb{S}^{N-1})\int_0^\infty k_{\widetilde{G}}(r) r \rho(\eps r) r^{N-1} \dd r\\
\leq & \Vol(\mathbb{S}^{N-1})\int_0^\infty k_{\widetilde{G}}(r) r \sinh(r)^{N-1}\dd r \leq M(\widetilde{G}).
\end{align*}
 Therefore,
\[\|L^*_{G_\eps}(\psi)\|_{L^2(\HH)}^2\leq M(\widetilde{G}) \int_0^1\int_{\RR^N}
k_{\widetilde{G}}(|V|_e)|V|_e\rho(\eps |V|_e) \int_\HH |\nabla\psi(\exp_y(-\tau\eps y_N V))|^2 \dmu(y) \dd V \dd \tau. \]
Using again that $\rho$ is increasing, we obtain by using Lemma \ref{lemmaJacobianExpFixedV}, for $\Psi=|\nabla \psi|^2$ and for the vector $-\tau\eps V$, that,  $\forall \eps\in (0,1)$,
\[\|L^*_{G_\eps}(\psi)\|_{L^2(\HH)}^2\leq M(\widetilde{G})\|\nabla \psi\|_{L^2(\HH)}^2 
\int_{\RR^N} k_{\widetilde{G}}(|V|_e)|V|_e\rho(|V|_e)e^{(N-1)|V|_e}\dd V.\] 
Since
\[\int_{\RR^N} k_{\widetilde{G}}(|V|_e)|V|_e\rho(|V|_e)e^{(N-1)|V|_e}\dd V=\Vol(\mathbb{S}^{N-1})\int_0^\infty k_{\widetilde{G}}(r) r \sinh(r)^{N-1} e^{(N-1)r}\dd r\leq M(\widetilde{G}),\]
it follows that $L_{G_\eps}^*:H^1(\HH)\to L^2(\HH)$ is bounded with norm less than $M(\widetilde{G})$.
\end{proof}

\begin{lemma}
\label{lemma.LGEpstoX}
Let $G$ satisfy Hypotheses  \ref{G1.intro} and \ref{G2.intro}. Then, for every $\psi\in H^1(\HH)$,
\begin{equation}
\label{lemma.LGEpstoX.eq}
\lim_{\eps\to 0}\|L^*_{G_\eps}(\psi)-X_{\widetilde{G}}(\psi)\|_{L^2 ( \HH)}= 0. 
\end{equation}
\end{lemma}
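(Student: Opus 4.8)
\subsection*{Proof plan for Lemma \ref{lemma.LGEpstoX}}

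The plan is to reduce to compactly supported smooth $\psi$ by density, and then, for such $\psi$, to estimate $\|L^*_{G_\eps}(\psi)-X_{\widetilde G}(\psi)\|_{L^2(\HH)}$ directly by splitting the tangent-space integral into a bounded region $|W|\le R$ and a tail $|W|>R$: the tail will be controlled uniformly in $\eps$ by Hypothesis \ref{G2.intro}, while the bounded region will be handled by the uniform continuity of $\nabla\psi$. Since $\HH$ is complete, $C_c^\infty(\HH)$ is dense in $H^1(\HH)$, and both operators appearing in \eqref{lemma.LGEpstoX.eq} are bounded from $H^1(\HH)$ to $L^2(\HH)$ with norm at most $M(\widetilde G)$, uniformly in $\eps$: for $L^*_{G_\eps}$ this is exactly Lemma \ref{H1L2Boundedness}, while for the limit operator $X_{\widetilde G}(\cdot)=\nabla(\cdot)\cdot X_G$ it follows from $\|X_G\|_{L^\infty(\HH)}\le M(\widetilde G)$ (Remark \ref{XLeqMG}). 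Hence, writing $\psi_n\to\psi$ in $H^1$ with $\psi_n\in C_c^\infty(\HH)$, a triangle-inequality argument bounds $\|L^*_{G_\eps}(\psi)-X_{\widetilde G}(\psi)\|_{L^2}$ by $\|L^*_{G_\eps}(\psi_n)-X_{\widetilde G}(\psi_n)\|_{L^2}+2M(\widetilde G)\|\psi-\psi_n\|_{H^1}$, so it suffices to treat $\psi\in C_c^\infty(\HH)$.

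For such $\psi$, I start from Lemma \ref{computations.LGEpsAdj} rewritten via Lemma \ref{diffExp} as in the proof of Lemma \ref{H1L2Boundedness}: with $x=\exp_y(-\tau\eps W)$,
\[
L^*_{G_\eps}(\psi)(y) = -\int_0^1\!\!\int_{T_y\HH}\widetilde G(y,W)\,\big(P(x,y)\nabla\psi(x)\cdot W\big)\,\rho(\eps|W|)\,\dmu(W)\,\dd\tau,
\]
where I have moved the (isometric) parallel transport onto the gradient factor, and $X_{\widetilde G}(\psi)(y)=-\int_{T_y\HH}\widetilde G(y,W)(\nabla\psi(y)\cdot W)\,\dmu(W)$. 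Subtracting and inserting a trivial $\int_0^1\dd\tau$, the difference $D_\eps(y):=L^*_{G_\eps}(\psi)(y)-X_{\widetilde G}(\psi)(y)$ has integrand controlled by $\widetilde G(y,W)\,|b_\eps|\,\rho(\eps|W|)$, where $b_\eps:=\big(P(x,y)\nabla\psi(x)-\nabla\psi(y)\big)\cdot W$ satisfies $|b_\eps|\le|P(x,y)\nabla\psi(x)-\nabla\psi(y)|\,|W|$, plus a lower-order term proportional to $(\rho(\eps|W|)-1)(\nabla\psi(y)\cdot W)$ that is supported in $\mathrm{supp}\,\psi$.

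For the tail $|W|>R$ I would repeat verbatim the Cauchy--Schwarz estimate of Lemma \ref{H1L2Boundedness}, splitting $\widetilde G\,b_\eps\,\rho=(\widetilde G^{1/2}|W|^{1/2}\rho^{1/2})(\widetilde G^{1/2}|W|^{-1/2}\rho^{1/2}b_\eps)$ and bounding crudely $|b_\eps|^2/|W|\le 2|W|(|\nabla\psi(x)|^2+|\nabla\psi(y)|^2)$; applying Lemma \ref{lemmaJacobianExpFixedV} to return the shifted $L^2$ norm of $\nabla\psi$ to $\|\nabla\psi\|_{L^2}^2$ gives, for $\eps\in(0,1)$,
\[
\|D_\eps^{\mathrm{far}}\|_{L^2(\HH)}^2\le 2\,\delta(R)\,M(\widetilde G)\,\|\nabla\psi\|_{L^2(\HH)}^2,\qquad \delta(R):=\Vol(\mathbb{S}^{N-1})\int_R^\infty k_{\widetilde G}(r)\,r\,\sinh(r)^{N-1}\,\dd r,
\]
which is finite and tends to $0$ as $R\to\infty$ by Hypothesis \ref{G2.intro}, uniformly in $\eps$. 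For the bounded part $|W|\le R$, note $d(x,y)=\tau\eps|W|\le\eps R$; since $\psi$ has compact support, once $\eps R$ is small the integrand is supported in a fixed compact enlargement $K_{2\delta}$ of $\mathrm{supp}\,\psi$, on which the uniform continuity of $\nabla\psi$ and the fact that parallel transport along geodesics of length $\le\eps R$ converges uniformly to the identity yield $|P(x,y)\nabla\psi(x)-\nabla\psi(y)|\le\omega(\eps R)$ for a modulus $\omega(s)\to0$; together with $\rho(\eps|W|)-1\le\rho(\eps R)-1\to0$ this gives $\|D_\eps^{\mathrm{near}}\|_{L^2(\HH)}\le C\,(\omega(\eps R)+\rho(\eps R)-1)\,\mu(K_{2\delta})^{1/2}\to0$ as $\eps\to0$. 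Choosing $R$ first to make the tail smaller than any prescribed $\eta$ and then letting $\eps\to0$ closes the argument. The main obstacle is precisely the tail: the crude bound on $b_\eps$ only produces boundedness (as in Lemma \ref{H1L2Boundedness}), so convergence must be extracted from the $R\to\infty$ decay of $\delta(R)$, which is why the exponential weight $(e^r\sinh r)^{N-1}$ in Hypothesis \ref{G2.intro} is indispensable --- it absorbs the $e^{(N-1)|V|_e}$ volume-distortion factor introduced by Lemma \ref{lemmaJacobianExpFixedV}. A secondary subtlety is the uniformity of the modulus $\omega$, which relies on the compact support of $\psi$ and the homogeneity of $\HH$.
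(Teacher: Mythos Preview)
Your proposal is correct and follows essentially the same strategy as the paper's proof: both arguments first reduce to $\psi\in C_c^\infty(\HH)$ by density (your opening paragraph is the paper's Step~3), and both control the large-$|W|$ contribution uniformly in $\eps$ by the tail of the integral in Hypothesis~\ref{G2.intro}, then treat the bounded-$|W|$ part using compact support of $\psi$. The only organisational difference is that the paper packages the truncation as an approximation of the \emph{kernel}: it builds $\widetilde{G^\eta}=\widetilde G\cdot\phi_\eta(|W|)$ with $\phi_\eta$ a cutoff supported in an annulus $[1/R_\eta,R_\eta]$, and then invokes Lemma~\ref{H1L2Boundedness} applied to $\widetilde G-\widetilde{G^\eta}$ (yielding $M(\widetilde G-\widetilde{G^\eta})\le\eta$) in place of your direct Cauchy--Schwarz tail estimate producing $\delta(R)$; on the truncated part the paper then uses dominated convergence on the compact annulus rather than an explicit modulus $\omega(\eps R)$. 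These are equivalent implementations of the same idea.

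One minor point to tighten: the ``lower-order term'' $(\rho(\eps|W|)-1)(\nabla\psi(y)\cdot W)$ is not automatically small on the tail $|W|>R$ just because it is supported in $\mathrm{supp}\,\psi$, since $\rho(\eps|W|)-1$ does not go to $0$ uniformly in $W$. You should either split this term into near/far as well (the far part is bounded in $L^2$ by $\delta(R)\|\nabla\psi\|_{L^2}$, since no volume distortion from Lemma~\ref{lemmaJacobianExpFixedV} enters here), or simply observe that it is dominated pointwise by $|\nabla\psi(y)|\int_{T_y\HH}k_{\widetilde G}(|W|)\,|W|\,\rho(|W|)\,\dmu(W)\le M(\widetilde G)\,|\nabla\psi(y)|$ and apply dominated convergence in $y$ on the compact set $\mathrm{supp}\,\psi$.
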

\begin{proof}[Proof of Lemma \ref{lemma.LGEpstoX.eq}]
The proof consists of three steps:

\textit{Step 1}: We further assume that $\widetilde{G}$ is $C^1$ and has uniform compact support away from the null vector in each tangent fiber, which means that there exists some $r_0>1$ such that, if $|W|\notin \left[\frac{1}{r_0},r_0\right]$,
\[\widetilde{G}(y,W)=0,\, \forall y\in \HH.\]
This is equivalent to $k_{\widetilde{G}}$ defined in Hypothesis \ref{G2.intro} being compactly supported in $(0,\infty)$.

In view of Lemma \ref{computations.LGEpsAdj}, 
\[
L^*_{G_\eps}(\psi)(y)  = -\int_{\mathcal{A}_{T_y\HH}[r_0^{-1},r_0]} \widetilde{G}\left(y, W\right)
W\cdot \int_0^1\nabla F_y(-\tau\eps W)\dd \tau \rho(\eps |W|) \dmu(W),
\]
where ${\mathcal{A}_{T_y\HH}[r_0^{-1},r_0]}$ stands for the annular domain in $T_y\HH$ centred at the origin, with radius inner radius $\frac{1}{r_0}$ and outer radius $r_0$:
\[{\mathcal{A}_{T_y\HH}[r_0^{-1},r_0]}:=\left\{W\in T_y\HH: |W|\in \left[\frac{1}{r_0},r_0\right]\right\}.\]
Since we work with continuous functions on compact domains, we can apply dominated convergence in the integral above. Using that $\rho(0)=1$,
\[X_{\widetilde{G}}(y)=-\int_{T_y\HH} \widetilde{G}\left(y, W\right)W\dmu(W)\text{ and }
X_{\widetilde{G}}(\psi)(y)=X_{\widetilde{G}}(y)\cdot \nabla F_y(0),\]
we arrive to:
\begin{equation}
\label{LGAdjToX.pointwise}
\lim_{\eps\to 0} L_{G_\eps}^*(\psi)(y)=X_{\widetilde{G}}(\psi)(y), \, \forall y\in \HH.
\end{equation}

Taking into account that $\widetilde{G}(y,W)=0$ for any $y\in \HH$ and any $|W|<r_0$, it follows that $L_{G_\eps}^*(\psi)$ vanishes outside $K^{r_0}:=\{y\in \HH: d(y,K)\leq r_0\}$, where $K$ is the support of $\psi$. This enables us to further apply dominated convergence in \eqref{LGAdjToX.pointwise} to finish the proof of \eqref{lemma.LGEpstoX.eq} in the compactly supported case.\\

\textit{Step 2:} We approximate the function $\widetilde{G}$ with compactly supported functions with respect to the tangent vector, like those considered in Step 1.

More precisely, for every $\eta>0$, we construct a function $\widetilde{G^\eta}:T\HH\to [0,\infty)$
invariant with respect to the geodesic flow such that:
\begin{equation}
\label{GGEta.p2}
\widetilde{G^\eta}\leq \widetilde{G},
\end{equation}
\begin{equation}
\label{GGEta.p3}
k_{\widetilde{G^\eta}}\text{ has compact support in }(0,\infty)
\end{equation}
and
\begin{equation}
\label{GGEta.p4}
M(\widetilde{G}-\widetilde{G^\eta})\leq \eta.
\end{equation}
The construction is done as follows: since $\widetilde{G}$ satisfies Hypothesis \ref{G2.intro}, we can take $R_\eta>0$ such that:
\[\Vol(\mathbb{S}^{N-1})\int_{\RR\setminus [\frac{1}{R_\eta},R_\eta]} k_{\widetilde{G}}(r) (1+r) (e^r \sinh(r))^{N-1} \dd r <\eta .\]
We further consider a smooth non-negative function $\phi_\eta$ which is equal to $1$ on $[\frac{1}{R_\eta},R_\eta]$, less then or equal to $1$ on $\RR$ and vanishes on $[0,\frac{1}{R_\eta+1}]\cup [R_\eta+1,\infty)$. We finally set: \[\widetilde{G^\eta}(x,V):=\widetilde{G}(x,V) \phi_\eta(|V|),\]
so that properties \eqref{GGEta.p2}-\eqref{GGEta.p4} are obviously satisfied.

Let $G^\eta: \HH\times\HH \to [0,\infty)$,
\begin{equation}
\label{GGEta.p1}
G^\eta(x,y)=\widetilde{G^\eta}(x,V_{x,y}).
\end{equation}
Also let $X^\eta$ be the first moment vector field corresponding to $\widetilde{G^\eta}$, which has the form:
\begin{equation}
\label{XEta}
X^\eta(x)=-\int_{T_x\HH} \widetilde{G^\eta}(x,W) W \dmu(W).
\end{equation}
Therefore, Remark \ref{XLeqMG} implies that:
\[\|X-X^\eta\|_{L^\infty(\HH)}\leq M(\widetilde{G}-\widetilde{G^\eta})\leq \eta,\]
which, together with the pointwise estimate:
\begin{equation}
\label{estimate.X.pointwise}
|X(\psi)-X^\eta(\psi)|=|\nabla \psi\cdot (X-X^\eta)| \leq \|X-X^\eta\|_{L^\infty(\HH)} |\nabla \psi|,\,\forall\psi\in H^1(\HH),
\end{equation}
leads to:
\begin{equation}
\label{estimate.X.XEta}
\|X(\psi)-X^\eta(\psi)\|_{L^2(\HH)}\leq \eta\|\nabla \psi\|_{L^2(\HH)}.
\end{equation}
On the other hand, we apply Lemma \ref{H1L2Boundedness} for the function $G-G^\eta$, together with \eqref{GGEta.p4} to obtain:
\begin{equation}
\label{estimate.LG.LGEta}
\|L_{G_\eps}^*(\psi)-L_{G^\eta_\eps}^*(\psi)\|_{L^2(\HH)}\leq \eta\|\nabla\psi\|_{L^2(\HH)}, \forall \eps\in(0,1).
\end{equation}
Moreover, applying the compactly supported case (Step 1) to $G^\eta$, we obtain that, for $\psi\in C_c^1(\HH)$, we have:
\begin{equation}
\label{LGEtaconvergence}
\lim_{\eps\to 0}\|L^*_{G^\eta_\eps}(\psi)-X^\eta(\psi)\|_{L^2(\HH)}= 0.
\end{equation}
The triangle inequality for the $L^2(\HH)$ norms:
\begin{align*}
\|L^*_{G_\eps}(\psi)-&X(\psi)\|_{L^2(\HH)} \\
&\leq \|L^*_{G_\eps}(\psi)-L^*_{G^\eta_\eps}(\psi)\|_{L^2(\HH)}+ \|L^*_{G^\eta_\eps}(\psi)-X^\eta(\psi)\|_{L^2(\HH)}+\|X^\eta(\psi)-X(\psi)\|_{L^2(\HH)},
\end{align*}
together with \eqref{estimate.LG.LGEta}, \eqref{LGEtaconvergence} and \eqref{estimate.X.XEta}, leads to \eqref{lemma.LGEpstoX.eq}.\\

\textit{Step 3}: Estimates \eqref{estimate.X.pointwise} and \eqref{estimate.LG.LGEta} allow us to prove that \eqref{lemma.LGEpstoX.eq} is valid for any $\psi\in H^1(\HH)$.
\end{proof}

We are now able to write the:
\begin{proof}[Proof of Theorem \ref{transportConvergence.intro}]
Theorem \ref{nonLocalExistence} implies that, for every $T>0$, the family $(u^\eps)_{\eps>0}$ is bounded in $L^2([0,T],L^2(\HH))$, so we can extract a subsequence that converges weakly, as $\eps \to 0$, to some $u\in L^2([0,T],L^2(\HH))$.
From Lemma \ref{H1L2Boundedness} and Lemma \ref{lemma.LGEpstoX} we deduce by dominated convergence that, for any $\varphi\in C_c^1([0,T)\times \HH)$,  $L_{G_\eps}^*\varphi \rightarrow X(\varphi)$ in $L^2((0,T),L^2(\HH))$ and then:
\[\begin{aligned}
\lefteqn{\int_0^\infty \int_\HH u(t,x)\partial_t \varphi(t,x) \dmu(x) \dd t }\\
&=\lim_{\eps\to 0}  \int_0^\infty \int_\HH u^\eps(t,x)\partial_t \varphi(t,x) \dmu(x) \dd t\\
&=-\int_\HH u_0(x)\varphi(0,x) \dmu(x) - \lim_{\eps\to 0}  \int_0^\infty \int_\HH \partial_t u^\eps(t,x)\varphi(t,x) \dmu(x) \dd t\\
&= -\int_\HH u_0(x)\varphi(0,x) \dmu(x) - \lim_{\eps\to 0}  \int_0^\infty \int_\HH L_{G_\eps}(u^\eps(t))(x)\varphi(t,x) \dmu(x) \dd t\\
&= -\int_\HH u_0(x)\varphi(0,x) \dmu(x) -  \lim_{\eps\to 0} \int_0^\infty \int_\HH u^\eps(t,x) L_{G_\eps}^*(\varphi(t))(x) \dmu(x) \dd t\\
&=-\int_\HH u_0(x)\varphi(0,x) \dmu(x) - \int_0^\infty \int_\HH u(t,x) X(\varphi(t))(x)\dmu(x) \dd t,
\end{aligned}\]
which means, by Definition \ref{def.weakTranspX} and Theorem \ref{transpXUniqueness}, that $u$ is the unique weak solution of problem \eqref{IntrotranspX}. In conclusion, since every subsequence of the initial sequence $(u_\eps)_{\eps>0}$ admits a subsequence weakly convergent to the same function $u$, then the weak convergence $u_\eps\rightharpoonup u$ is valid for the whole initial family of solutions $(u_\eps)_{\eps>0}$.
\end{proof}

\section{A large class of functions $G$ satisfying the assumptions of Theorem \ref{transportConvergence.intro}}
\label{section.exampleOfG}
In this section, we work on the Poincar\'e ball  model to provide an explicit construction of a very general class of functions $G$ satisfying Hypotheses \ref{G1.intro},  \ref{G2.intro} and \ref{G3.intro}. As outlined in Section \ref{section:geodesicFlow}, we construct a function $g:\mathbb{S}^{N-1}\times \mathbb{S}^{N-1}\times (0,\infty)\to [0,\infty)$, such that, if
\[\widetilde{G}(x,V)=g(\sigma^-(x,V),\sigma^+(x,V),|V|),\]
for any $V\neq 0$ in $T_x\HH$, then Hypotheses \ref{G2.intro} and \ref{G3.intro} are satisfied.
\begin{proposition}
\label{prop:exampleOfG}
If $g:\mathbb{S}^{N-1}\times \mathbb{S}^{N-1}\times (0,\infty)\to [0,\infty)$ has the separated variables expression
\[g(\sigma^-,\sigma^+,r)=g_1(\sigma^-,\sigma^+) \xi(r)\]
such that $g_1:\mathbb{S}^{N-1}\times \mathbb{S}^{N-1}\to [0,\infty)$ and $\xi:(0,\infty)\to [0,\infty)$ are $C^1$ and $\xi$ satisfies:
\begin{equation}
\label{exampleG:integrabilityXi}
\int_0^\infty \xi(r) (1+r)(e^r\sinh(r))^{N-1} dr <\infty,
\end{equation}
then $\widetilde{G}(x,V)=g(\sigma^-(x,V),\sigma^+(x,V),|V|)$ satisfies Hypotheses \ref{G1.intro}, \ref{G2.intro} and \ref{G3.intro}.
\end{proposition}
\begin{proof}
Proposition \ref{GInvariantGFq} implies that $\widetilde{G}$ is invariant under the geodesic flow.

Next, since $\widetilde{G}(x,V)\leq \|g_1\|_\infty\,  \xi(|V|)$ and $g_1$ is continuous on the compact space $\mathbb{S}^{N-1}\times \mathbb{S}^{N-1}$, it is clear that $\widetilde{G}$ satisfies Hypothesis \ref{G2.intro}.

We prove now that the vector field $X_{\widetilde{G}}$ is $C^1$. Indeed, we recall that:
\[X_{\widetilde{G}}(x)=\int_{T_x\HH} \widetilde{G}(x,V) V \dd \mu(V) =\int_{T_x\HH} g_1(\sigma^-(x,V),\sigma^+(x,V)) V \xi(|V|) \dd\mu(V).\]
Writing this in polar coordinates and taking into account that
\[\sigma^-(x,V)=\sigma^-\left(x,\frac{V}{|V|}\right)\text{ and } \sigma^+(x,V)=\sigma^+\left(x,\frac{V}{|V|}\right)\] 
(both vectors $V$ and $\frac{V}{|V|}$ describe the same geodesic), we obtain:
\begin{equation}
\label{XInPolar}
X_{\widetilde{G}}(x)=\int_0^\infty \xi(r)r^{N} dr \int_{T_x^1\HH} g_1(\sigma^-(x,W),\sigma^+(x,W)) W\dd\mu(W),
\end{equation}
where $T_x^1\HH$ stands for the unit sphere in $T_x\HH$.

Since, for a compact neighbourhood $\mathcal{V}_x$ of $x\in \HH$, the set
\[ T^1\mathcal{V}_x=\bigcup_{y\in \mathcal{V}_x} T_y^1\HH\]
is compact and the functions $\sigma^-$ and $\sigma^+$ are smooth on this set, we obtain by \eqref{XInPolar} and the fact that $g_1$ is of class $C^1$, that the vector field $X_{\widetilde{G}}$ is also $C^1$.
\end{proof}
From the proof above, it is clear that we can relax the conditions on the function $g_1$ in the sense that we require it to be bounded, but $C^1$ only outside the diagonal $\{(\sigma,\sigma)\}\subset \mathbb{S}^{N-1}\times \mathbb{S}^{N-1}$.

\begin{remark}
In \cite{MR3646345}, the authors consider a general local problem and construct suitable non-local kernels in order to approximate (in a relaxation result) the solution of the local equation. The approach of our paper is converse, in the sense that we start from kernels which obey some restrictions and arrive to a local problem as the limit of the non-local processes. 
\end{remark}

\begin{remark}
It is an interesting question to describe more precisely the set $\mathcal{X}$ of vector fields $X_G$ that could appear from the relaxation of the kernels $G_\varepsilon$. We remark that this set is quite general. For example, given any finite number of points $x_1,x_2,\ldots x_m\in \HH$ and any tangent vectors $V_i \in T_{x_i}\HH$, we can construct a function $\widetilde{G}$ such that the associated $X_G\in \mathcal{X}$ satisfies $X_G(x_i)=V_i$, for every $i=\overline{1,m}$. To do this -- assuming for simplicity, that the pairs $(\sigma^-(x_i,V_i),\sigma^+(x_i,V_i))$ are distinct -- we use Proposition \ref{prop:exampleOfG}, where we fix the function $\xi$ satisfying the required integrability properties and then appropriately choose $g_1$ supported in non-overlapping neighbourhoods of $(\sigma^-(x_i,V_i),\sigma^+(x_i,V_i))$, in order to obtain $X_G(x_i)=V_i$, for $i=\overline{1,m}$.
\end{remark}

\section{Compactness result on manifolds}
\label{section:compactenessManifolds}
In this section, we state and prove a compactness result for functions defined on a general class of Riemannian manifolds. This compactness tool is the manifold analogue of \cite[Theorem 3.1]{IgnatIgnatStancuDumitru}, which is in turn based on \cite[Theorem 6.11]{rossiNonLocal} and it will allow us to prove the convergence of the solutions of the non-local non-linear convection-diffusion problem \eqref{into.NonlocalConvDiff.eps} to the solution of the local one. The result is contained in the following:

\begin{theorem}
\label{compactnessResultM}
Let $(M,g)$ be a $N$-dimensional complete connected Riemannian manifold and $\Lambda:[0,\infty)\to [0,\infty)$ a continuous function such that $\Lambda(0)> 0$. We denote $\Lambda_\eps(r)=\frac{1}{\eps^N}\Lambda\left(\frac{r}{\eps}\right)$, $\eps>0$.\newline
Let $T>0$ and  $(u^\eps)_{\eps>0}$  a bounded family of functions in $L^2([0,T],L^2(M))$ such that
\begin{equation}
\label{conditionRhoU}
 \eps^{-2}\int_0^T \int_{M\times M} \Lambda_\eps(d(x,y))|u^\eps(t,y)-u^\eps(t,x)|^2 \dmu_g(x)\dmu_g(y) \dd t \leq \GG<\infty, \ \forall \eps>0.
\end{equation}
\begin{enumerate}[label=(\arabic*)]
\item If $u^\eps$ converges weakly to some $u\in L^2([0,T],L^2(M))$, then $u\in L^2([0,T],H^1(M))$ and there exists a constant $C$ which only depends on $T$, $M$ and $\Lambda$, such that
\[\int_0^T \|\nabla_g u(t)\|^2_{L^2(M)} \dd t\leq C\, \GG.\]
\item If, in addition, $ \|\partial_t u^\eps\|_{L^2([0,T],H^{-1}(M))}$ is uniformly bounded in $\eps>0$, then $(u^\eps)_{\eps>0}$ has a subsequence which converges strongly in $L^2([0,T],L^2_\loc(M))$.
\end{enumerate}
\end{theorem}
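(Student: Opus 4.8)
The plan is to read \eqref{conditionRhoU} as a uniform Bourgain--Brezis--Mironescu (BBM) energy bound: since $\eps^{-2}\Lambda_\eps(r)=\eps^{-N-2}\Lambda(r/\eps)$ and $\Lambda(0)>0$, the kernel concentrates on the diagonal at scale $\eps$ and behaves, for $r\sim\eps$, like $\rho_\eps(r)/r^2$ for a mollifier $\rho_\eps$. Both statements are then the Riemannian counterparts of the Euclidean BBM characterization and compactness theorems, and the strategy is to transfer the problem to $\RR^N$ through a locally finite atlas, following the chart manipulations of \cite{SobolevNormsManifolds}. The geometric input is that on a chart $\Omega$ with $\overline\Omega$ compact one has two-sided comparisons $c^{-1}|x-y|_e\le d(x,y)\le c\,|x-y|_e$ and $c^{-1}\dd x\le \dmu_g\le c\,\dd x$, with $c\to 1$ as $\Omega$ shrinks, and moreover $d(x,y)/|x-y|_e\to 1$ as the pair collapses to the diagonal; these reduce the intrinsic kernel to a Euclidean one, up to constants that can be absorbed.

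For part $(1)$ I would fix such a chart and exploit $\Lambda(0)>0$: choosing $\delta>0$ with $\Lambda\ge c_0>0$ on $[0,\delta]$ and discarding the (nonnegative) part of the integrand in \eqref{conditionRhoU} outside $\{|x-y|_e\le \delta\eps/c\}\subset\{d(x,y)\le\delta\eps\}$, the comparison estimates show that \eqref{conditionRhoU} dominates, up to a constant and over $\Omega$, the Euclidean Gagliardo energy $\int_0^T\!\iint \rho_\eps(|x-y|_e)\,|u^\eps(t,x)-u^\eps(t,y)|^2/|x-y|_e^2$. The $\Gamma$-liminf inequality for these BBM energies (which holds along weakly $L^2$-convergent sequences, so that mere weak convergence $u^\eps\rightharpoonup u$ suffices), as used in the Euclidean setting of \cite{IgnatIgnatStancuDumitru,rossiNonLocal}, then gives that $u$ has a weak Euclidean gradient on $\Omega$ with $\int_0^T\|\nabla_e u\|_{L^2(\Omega)}^2\le C\,\GG$. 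Passing back to the comparable norm $\nabla_g$ and assembling the local information through the divergence characterisation \eqref{H1WithDiv} and a subordinate partition of unity with bounded overlap (so that the local energies sum to a multiple of the global one) yields $u\in L^2([0,T],H^1(M))$ together with the claimed bound, with $C$ depending only on $T$, $M$ and $\Lambda$.

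For part $(2)$ I would establish relative compactness in $L^2([0,T],L^2_\loc(M))$ by the Fréchet--Kolmogorov--Riesz criterion on a product $[0,T]\times\Omega$, i.e. by proving uniform equicontinuity of $(u^\eps)$ under translations separately in space and in time. The space direction is the Euclidean BBM compactness transported to the chart: the bound \eqref{conditionRhoU} forces $\sup_\eps\int_0^T\!\int_{\Omega'}|u^\eps(t,x+h)-u^\eps(t,x)|^2\to 0$ as $h\to 0$, equivalently that the mollification error $\|u^\eps-u^\eps*\phi_\delta\|_{L^2}$ is small uniformly in $\eps$ as $\delta\to0$. The time direction comes from the hypothesis $\|\partial_t u^\eps\|_{L^2([0,T],H^{-1}(M))}\le C$, which, writing $u^\eps(t+s)-u^\eps(t)=\int_t^{t+s}\partial_\tau u^\eps\,\dd\tau$ and using Cauchy--Schwarz, yields $\|u^\eps(\cdot+s)-u^\eps(\cdot)\|_{L^2([0,T-s],H^{-1}(M))}\le C\,s$.

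The two controls live in different norms, and bridging them is where the real work lies. Setting $w=u^\eps(t+s,\cdot)-u^\eps(t,\cdot)$ and splitting $\|w\|_{L^2}\le\|w-w*\phi_\delta\|_{L^2}+\|w*\phi_\delta\|_{L^2}$, the first term is small uniformly in $\eps$ and $s$ by the BBM energy bound once $\delta$ is fixed small, while the second is estimated by $C_\delta\|w\|_{H^{-1}}$ since mollification maps $H^{-1}$ into $L^2$; the $H^{-1}$ time-continuity then makes it small for $s$ small, $\delta$ fixed. This gives the uniform time-equicontinuity in $L^2_\loc$ required by the criterion. I expect this last combination to be the main obstacle: because $(u^\eps)$ is only bounded in the $\eps$-dependent non-local energy and not uniformly in $H^1$, the standard Aubin--Lions--Simon lemma does not apply directly, and one must substitute the BBM energy for the missing compact embedding and interpolate it against the $H^{-1}$ bound on $\partial_t u^\eps$---all while keeping track of the chart-dependent comparison constants on the noncompact manifold $M$.
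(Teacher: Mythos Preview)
Your plan for part~(1) is essentially the paper's own argument: localise to charts where $d(x,y)$ and $\dmu_g$ are comparable to their Euclidean counterparts, invoke the Euclidean BBM-type lower semicontinuity (the paper's Lemma~\ref{extendFromRossiNonLocal}, extracted from \cite{rossiNonLocal}), and reassemble via the divergence characterisation~\eqref{H1WithDiv} with a partition of unity. The paper's version is slightly more careful in one respect you gloss over: it covers only the compact support of the test field $X$ by a \emph{finite, disjoint} family of charts with bend parameter $\eta$, and then lets $\eta\to 0$ at the end so that the chart comparison constants disappear and $C$ depends only on $N$ and $\Lambda$. Your phrase ``locally finite atlas with bounded overlap'' would, on a non-compact $M$ without bounded geometry, leave atlas-dependent constants in $C$; the paper's device of testing against compactly supported $X$ and shrinking $\eta$ is what removes them.

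For part~(2) you take a genuinely different route. The paper does not re-prove compactness: it transfers the bound on $\|\partial_t u^\eps\|_{L^2([0,T],H^{-1})}$ through the chart maps (checking that the $H^{-1}$ norm is comparable under $\phi_k$) and then cites the Euclidean theorem \cite[Theorem~3.1]{IgnatIgnatStancuDumitru} as a black box on each $\phi_k(U_k^\tau)$. Your proposal instead unpacks that black box: prove Fr\'echet--Kolmogorov equicontinuity in space from the BBM energy, in time from the $H^{-1}$ bound on $\partial_t u^\eps$, and bridge the two via the mollification splitting $\|w\|_{L^2}\le\|w-w*\phi_\delta\|_{L^2}+C_\delta\|w\|_{H^{-1}}$. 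This is correct and is indeed how the Euclidean result is proved; it buys self-containment at the cost of redoing work already available in the literature. The paper's route is shorter precisely because it outsources the ``main obstacle'' you identify to \cite{IgnatIgnatStancuDumitru}, whereas you confront it directly.
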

\begin{remark}
\label{remark.particularLambda}
Since condition \eqref{conditionRhoU} is also satisfied for any $\widetilde{\Lambda}\leq \Lambda$, we can assume during the proofs that $\Lambda$ is smooth, compactly supported and non-increasing, as in the Euclidean version \cite[Theorem 3.1]{IgnatIgnatStancuDumitru}.
\end{remark}
The main idea of the proof of Theorem \ref{compactnessResultM} is to transfer the functions defined on $M$ to subsets of the Euclidean space, for which the conclusion is true by \cite[Theorem 3.1]{IgnatIgnatStancuDumitru}. In this sense, the following chart covering lemma, inspired from \cite[Lemma 3.1]{SobolevNormsManifolds}, essentially flattens the manifold $M$ locally to subsets of $\RR^N$.
\begin{lemma}
\label{chartSmallBends}
Let $E \subseteq M$ a compact set, then for each $\eta \in(0,1)$, there exists a finite family $\left(U_{k}\right)_{k=1}^{Q}, Q=Q(\eta)$, of bounded open sets of $M$ such that:
\begin{enumerate}[label=\roman*)]
\item
 $U_{k} \cap U_{l}=\emptyset, \,\forall k \neq l$ and the closures $(\overline{U}_k)_{k=1}^Q$ cover $E$.
\item for every $k=1,\ldots,Q$, the set $\overline{U}_k$ can be written as the intersection of a countable family $\left(U_{k}^{\tau}\right)_{\tau=1}^\infty$ of sets contained in the domain of a coordinate chart $\left(V_{k}, \phi_{k}\right)$ such that every set $U_{k}^{\tau}$ is a finite union of disjoint smooth bounded domains.
\item for every $k=1,\ldots,Q$, the following properties are satisfied:
\[
\begin{gathered}
(1-\eta)\left|\phi_{k}(x)-\phi_{k}(y)\right|_e \leq d(x, y) \leq(1+\eta)\left|\phi_{k}(x)-\phi_{k}(y)\right|_e, \\
1-\eta \leq \sqrt{\det g_{ij}(x)} \leq 1+\eta,
\end{gathered}
\]
for every $x, y \in V_{k}$. Here, $(g_{ij})_{i,j}$ is the matrix corresponding to the metric tensor in the local chart $(V_k,\phi_k)$.

\item For every $k=1,\ldots,Q$, the operator norm $\left\|\left.d \phi_{k}\right|_{x}\right\|_{(T_x M \to \RR^N)}$ of $\left.d \phi_{k}\right|_{x}:\left(T_{x} M,|\cdot|_{g}\right) \rightarrow\left(\mathbb{R}^N,|\cdot|_e\right)$ is bounded by:
\[
1-\eta \leq\left\|\left.d \phi_{k}\right|_{x}\right\|_{(T_x M \to \RR^N)} \leq 1+\eta,
\]
for every $x \in V_{k}$.
\item the boundary of $U_k$ has zero volume, for every $k=1,\ldots,Q$.
\end{enumerate}
\end{lemma}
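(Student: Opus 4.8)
The plan is to flatten $M$ locally by means of Riemannian normal coordinates, in which the metric agrees with the Euclidean one at the center of the chart, and then to exploit the continuity of the metric together with the compactness of $E$ to produce a finite, almost-isometric cover; the disjoint sets $U_k$ are obtained at the very end by a standard disjointification that is arranged to respect the measure-theoretic and regularity requirements. This is the manifold analogue of the flattening used in \cite{SobolevNormsManifolds}.

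First, for each point $p\in E$ I would take geodesic normal coordinates $\phi_p$ centered at $p$, so that $g_{ij}(p)=\delta_{ij}$ and $d\phi_p|_p$ is a linear isometry of $(T_pM,g)$ onto $(\RR^N,|\cdot|_e)$. Since the entries $g_{ij}$ are continuous and equal the identity matrix at $p$, for any prescribed $\eta$ there is a radius $r_p>0$ such that on the geodesic ball $B(p,r_p)$ one has $1-\eta\le\sqrt{\det g_{ij}}\le 1+\eta$ and $1-\eta\le\|d\phi_p|_x\|\le 1+\eta$, which gives the determinant bound in (iii) and property (iv). Shrinking $r_p$ further if necessary, I would also require $B(p,r_p)$ to be geodesically convex (such convex balls exist around every point by the convex neighborhood theorem) and its coordinate image $\phi_p(B(p,r_p))$ to be a Euclidean ball. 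The distance comparison in (iii) then follows from the pointwise bounds on $\|d\phi_p\|$ and its inverse: the upper bound $d(x,y)\le(1+\eta)|\phi_p(x)-\phi_p(y)|_e$ is obtained by pulling back the straight Euclidean segment joining $\phi_p(x)$ to $\phi_p(y)$, which lies in the convex image, and estimating its Riemannian length, while the lower bound uses geodesic convexity to keep the minimizing geodesic between $x$ and $y$ inside the chart, so that its Riemannian length dominates $(1-\eta)$ times the Euclidean length of its coordinate image.

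Next, by compactness I would extract from $\{B(p,r_p/2)\}_{p\in E}$ a finite subcover, relabeling the corresponding convex balls as $W_1,\dots,W_Q$ with enclosing charts $V_k\supseteq\overline{W_k}$; taking the $W_k$ to be preimages of Euclidean balls guarantees that each $\partial W_k$ is a smooth hypersurface of zero volume. I would then disjointify by setting $U_1=W_1$ and $U_k=W_k\setminus\bigcup_{j<k}\overline{W_j}$, which are bounded, open and pairwise disjoint, with $\partial U_k\subseteq\bigcup_{j\le k}\partial W_j$ of zero volume, yielding (v). The covering property in (i) is the point that needs care: one checks that $\bigcup_k W_k\setminus\bigcup_k U_k$ is contained in the nowhere-dense null set $\bigcup_j\partial W_j$, so that $\bigcup_k U_k$ is dense in $\bigcup_k W_k\supseteq E$, and since the union is finite, $\bigcup_k\overline{U_k}=\overline{\bigcup_k U_k}\supseteq E$. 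Finally, property (ii) is obtained by approximating the compact set $\overline{U_k}$ from outside in the chart coordinates: smoothing the shrinking coordinate neighborhoods $\{x:\mathrm{dist}_e(\phi_k(x),\phi_k(\overline{U_k}))<1/\tau\}$ produces finite unions of disjoint smooth bounded domains $U_k^\tau$ whose intersection over $\tau$ is exactly $\overline{U_k}$.

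The estimates (iii)–(iv) are essentially immediate from normal coordinates and continuity; I expect the main obstacle to be the bookkeeping of regularity through the disjointification, namely verifying simultaneously that the closures of the disjoint pieces still cover $E$ through the nowhere-dense argument above, and that each $\overline{U_k}$ can be presented as a decreasing intersection of finite unions of smooth domains as demanded in (ii). A secondary delicate point is the lower distance bound in (iii), which genuinely requires the minimizing geodesic to remain inside the chart and is the reason for insisting on geodesically convex balls.
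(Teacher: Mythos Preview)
Your approach is essentially the same as the paper's: normal coordinates to get (iii)--(iv), compactness to finitize, then the standard disjointification $U_k=W_k\setminus\overline{\bigcup_{j<k}U_j}$ for (i) and (v). The one noteworthy difference is how you handle (ii). The paper invokes Whitney's theorem to write each $\overline{U_k}$ as the zero set of a smooth nonnegative function $\zeta_k$, then uses Sard's lemma to pick regular values $\sigma_\tau\to 0$ and sets $U_k^\tau=\zeta_k^{-1}([0,\sigma_\tau])$; this directly produces compact smooth manifolds with boundary having finitely many components. Your version---``smoothing the shrinking coordinate neighborhoods $\{x:\mathrm{dist}_e(\phi_k(x),\phi_k(\overline{U_k}))<1/\tau\}$''---is heading in the same direction but is vague as written: the Euclidean distance function is only Lipschitz, so you still need a mollification followed by a Sard-type selection of regular values to actually obtain smooth boundaries, and you need to argue finiteness of components. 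Once you spell that out, it is effectively the Whitney--Sard argument in coordinates. Conversely, your discussion of (iii) via geodesic convexity is more careful than the paper, which simply asserts that (iii)--(iv) hold on sufficiently small normal balls.
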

\begin{proof}
For every point $x\in E$ we consider $(V_x,\phi_x)$ normal geodesic coordinates around $x$, restricted such that the chart domain $V_x$ is a ball and properties {\em iii)-iv)} above are satisfied. Now, we consider the family $(W_x)_{x\in E}$ of balls concentric with $V_x$, but with half radius and extract the finite subcover $(W_k)_{k=1}^Q$ of $E$.
The family $(U_k)_{k=1}^Q$ is constructed as follows: $U_1=W_1$, $U_2=W_2\setminus\overline{U}_1$, $U_3=W_3\setminus\overline{U_1\cup U_2}$ and so on. We obtain that properties {\em i)} and {\em v)} above are satisfied.

A classical result attributed to Whitney implies that every $\overline{U}_k$  is the set of zeros of a smooth function $\zeta_k:M\to [0,\infty)$, which in turn equals $1$ outside a large compact set. Sard's lemma provides us a sequence $(\sigma_\tau)_{\tau\geq 1}$ converging to zero as $\tau\rightarrow \infty$, of regular values of $\zeta_k$. Considering:
\[U_k^\tau=\zeta_k^{-1}([0,\sigma_\tau]),\]
it follows that $U_k^\tau$ is a smooth compact manifold with boundary $\zeta_k^{-1}(\sigma_\tau)$, so property {\em ii)} is satisfied.  We further remark that $U_k^\tau$ has finitely many connected components.
\end{proof}
We also need the following result related to \cite[Theorem 6.11, p.~128]{rossiNonLocal}, concerning the Euclidean case:
\begin{lemma}
\label{extendFromRossiNonLocal}
Let $T>0$,  $\Omega$ be a open and non-empty set of $\RR^N$ and $\Lambda$ as in Theorem \ref{compactnessResultM}. If there exists a positive constant $\widetilde \GG$ such that  $(f_\eps)_{\eps>0}$ converges weakly to $f$ in $L^2([0,T],L^2(\Omega))$ and satisfies:
\[\eps^{-2}\int_0^T\int_{\Omega\times\Omega} \Lambda_\eps(|y-x|_e)|f_\eps(t,y)-f_\eps(t,x)|^2\dd x \dd y \dd t\leq \widetilde \GG  , \forall \ \eps>0,\]
then $f\in L^2((0,T), H^1(\Omega))$ and there exists a positive constant $C(N,\Lambda)$ such that:
\[\|\nabla_e f\|^2_{L^2([0,T],L^2(\Omega))}\leq C \,\widetilde{\GG}.\]
\end{lemma}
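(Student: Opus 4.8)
The plan is to establish the claim through the dual characterisation of the spatial gradient: I will show that for every test vector field $\Phi\in C_c^\infty((0,T)\times\Omega;\RR^N)$ one has
\[
\left|\int_0^T\int_\Omega f(t,x)\,\dv_e\Phi(t,x)\,\dd x\,\dd t\right|\le C(N,\Lambda)\,\widetilde\GG^{1/2}\,\|\Phi\|_{L^2((0,T)\times\Omega)}.
\]
By the Euclidean analogue of the characterisation \eqref{H1WithDiv}, applied in the space variable with time as an inert parameter, together with the Riesz representation theorem on $L^2((0,T)\times\Omega;\RR^N)$, this inequality produces a representing field $\nabla_e f\in L^2$ for the weak spatial gradient; thus $f\in L^2((0,T),H^1(\Omega))$ and $\|\nabla_e f\|_{L^2}^2\le C^2\,\widetilde\GG$, which is exactly the assertion. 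As a preliminary reduction I note that the hypothesis only weakens when $\Lambda$ is replaced by a smaller kernel, so, exactly as in Remark \ref{remark.particularLambda}, I may assume $\Lambda$ smooth, non-increasing and compactly supported with $\Lambda(0)>0$; in particular $c_\Lambda:=\frac1N\int_{\RR^N}\Lambda(|z|_e)|z|_e^2\,\dd z$ is finite and strictly positive.

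To produce the inequality I would introduce the antisymmetric nonlocal pairing
\[
\ell_\eps(\Phi):=\frac1\eps\int_0^T\int_\Omega\int_\Omega\Lambda_\eps(|x-y|_e)\,(f_\eps(t,y)-f_\eps(t,x))\,\frac{y-x}\eps\cdot\Phi(t,x)\,\dd y\,\dd x\,\dd t.
\]
A Cauchy--Schwarz splitting of the integrand against the symmetric measure $\Lambda_\eps(|x-y|_e)\,\dd y\,\dd x\,\dd t$ separates a factor carrying the differences of $f_\eps$, which is bounded by $\widetilde\GG^{1/2}$ precisely by the standing hypothesis, from a factor carrying $\Phi$; the substitution $z=(y-x)/\eps$ turns the latter into $\bigl(\int\Lambda(|z|_e)|z|_e^2\,\dd z\bigr)^{1/2}\|\Phi\|_{L^2}=(Nc_\Lambda)^{1/2}\|\Phi\|_{L^2}$, with the powers of $\eps$ cancelling. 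This yields $|\ell_\eps(\Phi)|\le(Nc_\Lambda)^{1/2}\,\widetilde\GG^{1/2}\,\|\Phi\|_{L^2}$ uniformly in $\eps$.

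It remains to identify $\lim_{\eps\to0}\ell_\eps(\Phi)$. First I would symmetrise, renaming $x\leftrightarrow y$ in the term containing $f_\eps(t,y)$ and using the symmetry of $\Lambda_\eps(|x-y|_e)$; this transfers all the $\eps$-dependence onto the smooth field $\Phi$ and rewrites $\ell_\eps(\Phi)=\int_0^T\int_\Omega f_\eps\,\Theta_\eps$ for a smeared test function $\Theta_\eps$. After the substitution $z=(y-x)/\eps$ one obtains $\Theta_\eps(t,x)=\int \Lambda(|z|_e)\,(-z)\cdot\eps^{-1}\bigl(\Phi(t,x+\eps z)-\Phi(t,x)\bigr)\,\dd z$, where the a priori singular $\eps^{-1}$ contribution proportional to $\Phi(t,x)$ integrates to zero by the radial symmetry of $\Lambda$ (since $\int\Lambda(|z|_e)\,z\,\dd z=0$). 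A Taylor expansion of $\Phi$ then gives $\Theta_\eps\to-c_\Lambda\,\dv_e\Phi$ uniformly, with supports in a fixed compact subset of $(0,T)\times\Omega$, hence strongly in $L^2$; the weak convergence $f_\eps\rightharpoonup f$ finally yields $\ell_\eps(\Phi)\to-c_\Lambda\int_0^T\int_\Omega f\,\dv_e\Phi$. Combining this with the uniform bound and dividing by $c_\Lambda$ gives the target inequality with $C(N,\Lambda)=(N/c_\Lambda)^{1/2}$.

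The hard part will be the limit passage for $\Theta_\eps$: one must verify that the genuinely singular $\eps^{-1}$ term cancels --- this is exactly where the radiality of $\Lambda$ is used --- and that the surviving difference quotient of $\Phi$ converges in $L^2$ despite the truncation of the inner $z$-integration near $\partial\Omega$; this truncation is harmless because $\Phi$ is compactly supported inside $\Omega$, so for small $\eps$ the inner integral is effectively over all of $\mathrm{supp}\,\Lambda$. A secondary, more routine point is checking that the purely dual space-time estimate genuinely upgrades to the fibered conclusion $f(t)\in H^1(\Omega)$ for a.e.\ $t$ together with the integrated gradient bound, which is precisely what the Riesz representation against $L^2((0,T)\times\Omega;\RR^N)$ provides.
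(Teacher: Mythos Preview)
Your argument is correct and takes a genuinely different route from the paper. The paper, following \cite{rossiNonLocal}, works directly with the difference quotients: after the change of variable $y=x+\eps z$ it observes that $\Lambda(|z|)^{1/2}\rchi_\Omega(x+\eps z)\,\eps^{-1}\bigl(\overline{f_\eps}(t,x+\eps z)-f_\eps(t,x)\bigr)$ is bounded in $L^2((0,T)\times\RR^N\times\Omega)$, extracts a weakly convergent subsequence, identifies its limit as $\Lambda(|z|)^{1/2}\,z\cdot\nabla_e f$, and then invokes weak lower semicontinuity of the $L^2$ norm. You proceed dually: you pair against $\dv_e\Phi$ for smooth compactly supported $\Phi$, symmetrise to transfer all the $\eps$-dependence onto $\Phi$, and read off the limit by Taylor expansion on the smooth side. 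Your version is more self-contained, since the identification of the weak limit in the paper's argument --- which is where membership of $f$ in $H^1$ is actually established --- is the delicate step and is deferred to the cited reference, whereas your limiting step is elementary once the radial cancellation $\int\Lambda(|z|)z\,\dd z=0$ is noted. The only cost is a slightly worse constant: your Cauchy--Schwarz bound picks up $\int\Lambda(|z|)|z|^2\,\dd z=Nc_\Lambda$, giving $C=N/c_\Lambda$, while the paper's lower-semicontinuity argument retains the directional weight $|z\cdot\omega|^2$ and yields $C=1/c_\Lambda$; for the lemma as stated this is immaterial.
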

\begin{proof}
We follow the proof of \cite[Theorem 6.11 (1.i)]{rossiNonLocal} and introduce $\overline{f}$, the extension of $f$ by zero outside of $\Omega$. It satisfies
\[ \int_0^T\int_{\RR^N} \int_{\Omega} \Lambda(|z|_e)\rchi_\Omega(x+\eps z)\left|\frac{\overline{f_\eps}(t,x+\eps z)-f_\eps(t,x)}{\eps}\right|^2 \dd x \dd z \dd t\leq  \widetilde{\GG},\]
and, up to a subsequence, 
\[(\Lambda(|z|_e))^\frac{1}{2}\rchi_\Omega(x+\eps z)\frac{\overline{f_\eps}(t,x+\eps z)-f_\eps(t,x)}{\eps}\rightharpoonup (\Lambda(|z|_e))^\frac{1}{2}z\cdot \nabla_e f(t,x), \text{in}\ L^2((0,T)\times \RR^N\times \Omega). \]
 Therefore, using that the strong norm in a Banach space is weakly lower semicontinuous, we obtain:
\[\int_0^T\int_{\RR^N}\int_\Omega \Lambda(|z|_e)|z\cdot \nabla_e f(t,x)|^2 \dd x \dd z\dd t\leq \widetilde{\GG}.\]
Since 
\[C_1:=\int_{\RR^N} \Lambda(|z|_e)|z\cdot \omega|^2 \dd z, \, \omega\in \mathbb{S}^{N-1},\]
 is independent on the choice of the unit vector $\omega\in \mathbb{S}^{N-1}$ the conclusion follows with $C=\frac{1}{C_1}$
\end{proof}

\begin{proof}[Proof of Theorem \ref{compactnessResultM}]
Let us consider a time-dependent smooth vector field $X$ with $\mathrm{supp}(X)\subseteq [0,T]\times E$ for some compact set $E\subset M$. 
In order to conclude that $u\in L^2([0,T],H^1(M))$, we look for a positive constant $C_u$ independent of $X$ such that:
\begin{equation}
\label{uDivXLeqL2norm}
\left|\int_0^T\int_M u \, \dv_g(X) \dmu_g \dd t\right| \leq C_u \|X\|_{L^2([0,T],L^2(M))}.
\end{equation}

For the set $E$ above, we consider the construction in Lemma \ref{chartSmallBends}, for an arbitrary $\eta\in (0,1)$. In this setting, we denote $X_k$ the expression of $X$ in local coordinates given by $(V_k,\phi_k)$ and, by Lemma \ref{chartSmallBends} {\em iii)-iv)}, we obtain:
\begin{equation}
\label{XEuclLEQXManifold}
\left\|X_k\right\|^2_{L^2(\phi_k(U_k^\tau))}=\int_{\phi_k(U_k^\tau)} |X_k(a)|_e^2 \dd a= \int_{U_k^\tau} |d \phi_k(X)(x)|_e^2 \frac{1}{\sqrt{ \det g_{ij}(x)}} \dmu_g(x)\leq \frac{(1+\eta)^2}{1-\eta} \left\|X\right\|^2_{L^2(U_k^\tau)}.
\end{equation}

For every $k=1,\ldots,Q$ and every $\tau\geq 1$, we obtain:
\[ \eps^{-2}\int_0^T  \int_{U_k^\tau\times U_k^\tau} \Lambda_\eps(d(x,y))|u^\eps(y)-u^\eps(x)|^2 \dmu_g(x)\dmu_g(y) \dd t\leq \GG. \]
Next, we transport everything through the chart map $\phi_k$ and get:
\[\begin{aligned}
\eps^{-2}\int_0^T \int_{\phi_k(U_k^\tau)\times \phi_k(U_k^\tau)} &\Lambda_\eps(d(\phi_k^{-1}(a),\phi_k^{-1}(b)))|u^\eps\circ\phi_k^{-1}(b)-u^\eps\circ\phi_k^{-1}(a)|^2 \\
&\quad\quad \times\sqrt{\det g_{ij}(\phi_k^{-1}(a))} \sqrt{\det g_{ij}(\phi_k^{-1}(b))} \dd a \dd b \dd t\leq \GG.
\end{aligned}
\]
From Lemma \ref{chartSmallBends}, since $\Lambda$ is non-increasing (see Remark \ref{remark.particularLambda}), we obtain that, for $u^\eps_k:=u^\eps\circ \phi_k^{-1}$, it holds:
\[\eps^{-2}\int_0^T  \int_{\phi_k(U_k^\tau)\times \phi_k(U_k^\tau)} \Lambda_\eps((1+\eta)|b-a|_e)|u^\eps_k(b)-u^\eps_k(a)|^2 \dd a \dd b \dd t \leq\frac{\GG}{(1-\eta)^2}. \]
Since $\eta\in (0,1)$ we denote $\widetilde{\Lambda}_\eps(r):=\Lambda_\eps(2r)$ and we obtain:
\begin{equation}
\label{estimate.GammaEpsUKEps}
\eps^{-2}\int_0^T \int_{\phi_k(U_k^\tau)\times \phi_k(U_k^\tau)} \widetilde{\Lambda}_\eps(|b-a|_e)|u^\eps_k(t,b)-u^\eps_k(t,a)|^2 \dd a \dd b \dd t \leq\frac{\GG}{(1-\eta)^2}.
\end{equation}
Next, we transfer the weak convergence $u^\eps\rightharpoonup u$ through the chart map $\phi_k$ using the same changes of variables from above and obtain that:
\[u^\eps_k\rightharpoonup u_k:=u\circ \phi_k^{-1} \text{ in } L^2([0,T],\phi_k(U_k^\tau)).\]
 We  use \eqref{estimate.GammaEpsUKEps} and  Lemma \ref{extendFromRossiNonLocal} to obtain that $u_k\in L^2([0,T],H^1(\phi_k(U_k^\tau)))$ and there exists a constant $C=C_{\tilde \Lambda}$  with
\begin{equation}
\label{localEstimateNormNablaK}
\int_0^T \left\|\nabla_e u_k \right\|^2_{L^2(\phi_k(U_k^\tau))}\leq \frac{C \GG}{(1-\eta)^2}.
\end{equation}

Let $(\delta_k)_{k=1}^Q$ a partition of unity subordinated to the cover $(U_k^\tau)_{k=1}^Q$ (see \cite[Lemma 9.3]{brezis}). Then, for almost every $t\in [0,T]$,
\begin{equation}
\label{uDivXLocalSums}
\int_M u(t) \, \dv_g(X) \dmu_g =\sum_{k=1}^Q \int_{U_k^\tau} u(t)\,\dv_g(\delta_k X) \dmu_g.
\end{equation}
Next, to simplify writing, we drop the time dependence. For every $k=1,\ldots,Q$,
\begin{equation}
\label{uDivXLocal2}
\int_{U_k^\tau} u\, \dv_g (\delta_k X) \dmu_g = \int_{\phi_k(U_k^\tau)} u(\phi_k^{-1}(a))\, \dv _g(\delta_k X)(\phi_k^{-1}(a)) \sqrt{\det g_{ij}(\phi_k^{-1}(a))}\hspace{0.1cm} \dd a.
\end{equation}

Now, we denote $\tilde{\delta}_k=\delta_k\circ\phi_k^{-1}$ and recall that $X_k$ is the expression of $X$ in the local coordinates given by $(V_k,\phi_k)$. The expression of the Riemannian divergence \eqref{riemannianDiv} in these coordinates implies that that the right hand side integral in \eqref{uDivXLocal2} is equal to:
\[\int_{\phi_k(U_k^\tau)} u_k\, \dv_e \left(\tilde{\delta}_k X_k \sqrt{\det (g_{ij}\circ\phi_k^{-1})}\right) \dd a.\]

We use that for a.e. $t\in (0,T)$, $u_k(t)\in H^1(U_k^\tau)$ and $\delta_k X_k$ has compact support in $U_k^\tau$ to obtain that:
\[\int_{U_k^\tau} u\, \dv_g (\delta_k X) \dmu_g
=\int_{\phi_k(U_k^\tau)} \tilde{\delta}_k \nabla_e u_k \cdot X_k \sqrt{\det (g_{ij}\circ\phi_k^{-1})}\hspace{0.1cm} \dd a.\]
Next, using the third property in 
 Lemma \ref{chartSmallBends} and the estimate on $X_k$ in \eqref{XEuclLEQXManifold} we get that:
 \begin{equation}
\label{localGradientNormEstimate}
\left|\int_{U_k^\tau} u\, \dv_g (\delta_k X) \dmu_g\right|\leq (1+\eta) \|\nabla_e u_k  \|_{L^2(\phi_k(U_k^\tau))}  \|X_k\|_{L^2(\phi_k(U_k^\tau))}\leq \frac{(1+\eta)^2}{(1-\eta)^{1/2}} \|\nabla_e u_k  \|_{L^2(\phi_k(U_k^\tau))}   \|X\|_{L^2( U_k^\tau)}.
\end{equation}
Integrating with respect to time variable, relations \eqref{localEstimateNormNablaK} and \eqref{localGradientNormEstimate}  imply that:
\[\left|\int_0^T \int_{U_k^\tau} u \, \dv_g(\delta_k X) \dmu_g \dd t\right|\leq \frac{(1+\eta)^2}{(1-\eta)^\frac{3}{2}} \sqrt{C\, \GG} \left\|X\right\|_{L^2([0,T],L^2(U_k^\tau))}.\]
Considering the sum over $k=1,\ldots,Q$, equality \eqref{uDivXLocalSums} implies:
\[\left|\int_0^T \int_M u\, \dv_g X\, \dmu_g \dd t\right| \leq \frac{(1+\eta)^2}{(1-\eta)^\frac{3}{2}} \sqrt{C\, \GG}\sum_{k=1}^Q \left\|X\right\|_{L^2([0,T],L^2(U_k^\tau))}.\]
Since this is true for all $\tau=1,2,\ldots$, we obtain by Lemma \ref{chartSmallBends} {\em v)} that:
\[\left|\int_0^T \int_M u\, \dv_g X\, \dmu_g \dd t\right|\leq \frac{(1+\eta)^2}{(1-\eta)^\frac{3}{2}} \sqrt{C\, \GG}\,\sum_{k=1}^Q \left\|X\right\|_{L^2([0,T],L^2(U_k))}=\frac{(1+\eta)^2}{(1-\eta)^\frac{3}{2}} \sqrt{C\, \GG}\,\left\|X\right\|_{L^2([0,T],L^2(M))}.\]
Now, we take the limit as $\eta$ tends to $0$ and get:
\[\left|\int_0^T \int_M u\, \dv_g X\, \dmu_g \dd t\right| \leq \sqrt{C\, \GG}\,\left\|X\right\|_{L^2([0,T],L^2(M))},\]
so we are in the setting of \eqref{uDivXLeqL2norm}. We deduce that $u\in L^2([0,T],H^1(M))$ with
\[\int_0^T \|\nabla_g u(t)\|^2_{L^2(M)} \dd t\leq C\, \GG.\]
We have proved the first part of Theorem \ref{compactnessResultM}.

To deal with the second part of the theorem, it suffices to prove the strong convergence $u_\eps \to u$ in $L^2([0,T],L^2(U_k^\tau))$ up to a subsequence, for every $k$ and $\tau$. As usual, we transfer the problem on the Euclidean space via $\phi_k$ and apply the known Euclidean result (\cite[Theorem 3.1]{IgnatIgnatStancuDumitru}).

Indeed, let $u^\eps_k$ as above. We will show that there is a constant $\widetilde{K}>0$ such that, for every $\eps>0$ and $\varphi\in C_c^\infty((0,T)\times \phi_k(U_k^\tau))$:
\[\left|\int_0^T \int_{\phi_k(U_k^\tau)} u^\eps_k(t,a) \partial_t \varphi(t,a) \dd a \dd t\right|^2 \leq \widetilde{K} \int_0^T \|\varphi(t)\|^2_{H^1(\phi_k(U_k^\tau))}\dd t,\]
assuming that we know that there exists $K>0$ such that:
\begin{equation}
\label{utL2H-1Equivalence}
\left|\int_0^T \int_{U_k^\tau} u^\eps(t,x) \partial_t \psi(t,x) \dmu_g(x) \dd t\right|^2 \leq K\int_0^T \|\psi(t)\|^2_{H^1(U_k^\tau)}\dd t,
\end{equation}
for every $\eps>0$ and $\psi\in C_c^\infty((0,T)\times U_k^\tau)$. For details about these equivalent characterizations of the norm $\|\partial_t u\|_{L^2([0,T],H^{-1})}$ see \cite[Propositions 23.20 and 23.23]{zeidlerIIA}.

For $\varphi\in C_c^\infty((0,T)\times \phi_k(U_k^\tau))$, the third part of Lemma \ref{chartSmallBends}, together with \eqref{utL2H-1Equivalence} implies that:
\[\begin{aligned}
& \left|\int_0^T \int_{\phi_k(U_k^\tau)} u^\eps_k(t,a) \partial_t \varphi(t,a) \dd a \dd t\right|^2 = \left|\int_0^T \int_{U_k^\tau} u^\eps(t,x) \partial_t\varphi(t,\phi_k(x))\frac{1}{\sqrt{\det g_{ij}(x)}} \dmu_g(x) \dd t\right|^2\\
& \leq \frac{K}{(1-\eta)^2}\int_0^T \int_{U_k^\tau}\left[|\varphi(t,\phi_k(x))|^2+|\nabla_g [\varphi(t,\phi_k(x))]|_g^2\right]\dmu_g(x)\dd t
\end{aligned}
\]
Since, by the fourth part of Lemma \ref{chartSmallBends},
\[|\nabla_g [\varphi(\phi_k(x))]|_g= \|d(\varphi\circ \phi_k)_x\|_{(T_xM\to \RR)}\leq \|d\varphi_{\phi_k(x)}\|_{(\RR^N\to \RR)} \|d\phi_k\|_{(T_x M \to \RR^N)}\leq (1+\eta)|\nabla_e \varphi(\phi_k(x))|_e,\]
we change variables to the Euclidean space and obtain:
\[\left|\int_0^T \int_{\phi_k(U_k^\tau)} u^\eps_k(t,a) \partial_t \varphi(t,a) \dd a \dd t\right|^2\leq \frac{K(1+\eta)^3}{(1-\eta)^2} \int_0^T \|\varphi(t)\|^2_{H^1(\phi_k(U_k^\tau))}\dd t.\]

Therefore, we proved that $\|\partial_t u_k^\eps\|_{L^2\left([0,T],H^{-1}\left(\phi_k\left(U_k^\tau\right)\right)\right)}$
is uniformly bounded in $\eps>0$, and, since the set $\phi_k\left(U_k^\tau\right)$ has finitely many connected components, each of them being a smooth domain, \cite[Theorem 3.1]{IgnatIgnatStancuDumitru} implies that, up to a subsequence, $(u^\eps_k)_{\eps>0}$ converges strongly to $u_k$ in $L^2([0,T],\phi_k(U_k^\tau))$. With the same change of variables that we used extensively in this proof, we get that $(u_\eps)_{\eps>0}$ converges strongly to $u$ in every $L^2([0,T],L^2(U_k^\tau))$, so the latter convergence takes place in $L^2([0,T],L^2_\loc(M))$.
\end{proof}
\section{Local non-linear convection-diffusion on the hyperbolic space}
\label{section:localNonLinearConvDiff}
In this section, we turn our attention towards a local non-linear convection-diffusion problem on the hyperbolic space. The positive constant $A$ represents the diffusivity coefficient, whereas the locally Lipschitz real function $f$, together with the bounded $C^1$ vector field $X$, will act as the non-linear convection term for the particle system.
\begin{equation}
\label{localConvDiff}
\left\{\begin{array}{ll}
\partial_t u(t,x) = A \Delta u(t,x) -\dv(f(u(t)) X)(x), & x\in \HH, t\geq 0;\\
u(0,x)=u_0(x), & x\in \HH.
\end{array}\right.
\end{equation}
\begin{definition}
\label{def.weakLocalConvDiff} Let $u_0\in L^2(\HH)\cap L^\infty(\HH)$.
We call 
\begin{equation}
\label{defLSpace}
u\in \mathcal{L}:=C([0,\infty),L^2(\HH))\cap L^2_\loc ([0,\infty),H^1(\HH))\cap L^\infty([0,\infty),L^\infty(\HH))
\end{equation}
a \textit{weak solution} of \eqref{localConvDiff} if, for every $\varphi\in C_c^1([0,\infty),H^1(\HH))$,
\begin{equation}
\label{localConvDiffWeakest}
\begin{aligned}
& \int_0^\infty \int_\HH u(t,x) \partial_t \varphi(t,x) \dmu(x) \dd t +\int_\HH u_0(x) \varphi(0,x) \dmu(x) \\
&\quad=\int_0^\infty \int_\HH \left[A\nabla u(t,x)\cdot \nabla \varphi(t,x) -f(u(t,x)) X(x)\cdot \nabla \varphi(t,x)\right] \dmu(x) \dd t.
\end{aligned}
\end{equation}
\end{definition}
We prove an equivalent formulation of the definition above:
\begin{proposition}
\label{equivLocalWeakConvDiff}
A function $u$ belonging to the space $\mathcal{L}$ defined in \eqref{defLSpace}
satisfies \eqref{localConvDiffWeakest} if and only if $\partial_t u\in L^2_\loc ([0,\infty),H^{-1}(\HH))$ and $u$ satisfies:
\begin{equation}
\label{localConvDiffWeak}
\left\{\begin{array}{l}\displaystyle
\langle \partial_t u(t),\psi\rangle_{H^{-1}(\HH),H^1(\HH)} + A \int_\HH \nabla u(t)\cdot \nabla \psi\, \dmu(x) = \int_\HH f(u(t))X(x)\cdot \nabla \psi\, \dmu(x),\\[10pt]
\hspace{10cm} \forall \psi\in H^1(\HH),\text{ a.e. }t\geq 0;\\
u(0)=u_0.
\end{array}\right.
\end{equation}
\end{proposition}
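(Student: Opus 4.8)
The plan is to treat this as a standard equivalence between an integrated space-time weak formulation and a pointwise-in-time variational formulation in the Gelfand triple $H^1(\HH)\hookrightarrow L^2(\HH)\hookrightarrow H^{-1}(\HH)$. The central analytic tool throughout will be the integration-by-parts formula: for $u\in C([0,\infty),L^2(\HH))$ with $\partial_t u\in L^2_\loc([0,\infty),H^{-1}(\HH))$ and $\varphi\in C_c^1([0,\infty),H^1(\HH))$, one has
\[
\int_0^\infty \langle\partial_t u(t),\varphi(t)\rangle\,\dd t = -\int_\HH u(0)\,\varphi(0)\,\dmu - \int_0^\infty\int_\HH u(t)\,\partial_t\varphi(t)\,\dmu\,\dd t,
\]
the boundary term at $t=+\infty$ vanishing by compact support (see \cite{zeidlerIIA}).

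For the forward implication, I would first restrict \eqref{localConvDiffWeakest} to test functions of the product form $\varphi(t,x)=\theta(t)\psi(x)$ with $\theta\in C_c^\infty((0,\infty))$, so that $\varphi(0)=0$, and $\psi\in H^1(\HH)$. This identifies, for each fixed $\psi$, the distributional time-derivative of $t\mapsto (u(t),\psi)_{L^2(\HH)}$ with the functional
\[
F(t)(\psi):=-A\int_\HH\nabla u(t)\cdot\nabla\psi\,\dmu+\int_\HH f(u(t))\,X\cdot\nabla\psi\,\dmu.
\]
The key point is that $F\in L^2_\loc([0,\infty),H^{-1}(\HH))$: the first summand is controlled by $A\|\nabla u(t)\|_{L^2(\HH)}$, which lies in $L^2_\loc$ in time since $u\in L^2_\loc([0,\infty),H^1(\HH))$; for the second, since $f$ is locally Lipschitz with $f(0)=0$ and $u\in L^\infty([0,\infty),L^\infty(\HH))$, on the range of $u$ one has $|f(u)|\le L_K|u|$, whence $\|f(u(t))X\|_{L^2(\HH)}\le \|X\|_{L^\infty(\HH)}L_K\|u(t)\|_{L^2(\HH)}$ is bounded in time. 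Interchanging the duality pairing with the time-integral and using that $H^1(\HH)$ is separable and dense in $L^2(\HH)$ — to pass from ``for each $\psi$, a.e.\ $t$'' to ``a.e.\ $t$, for all $\psi$'' — yields $\partial_t u=F$ in $L^2_\loc([0,\infty),H^{-1}(\HH))$, which is precisely the first line of \eqref{localConvDiffWeak}. To recover the initial condition, I would then integrate this identity in time against an arbitrary $\varphi\in C_c^1([0,\infty),H^1(\HH))$, apply the integration-by-parts formula, and compare with \eqref{localConvDiffWeakest}; all the space-time integrals cancel, leaving $\int_\HH u(0)\,\varphi(0)\,\dmu=\int_\HH u_0\,\varphi(0)\,\dmu$. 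Choosing $\varphi(0)=\psi$ arbitrary in $H^1(\HH)$ and using density gives $u(0)=u_0$ in $L^2(\HH)$. The reverse implication is then a direct computation: assuming \eqref{localConvDiffWeak}, for $\varphi\in C_c^1([0,\infty),H^1(\HH))$ one tests the variational identity at almost every $t$ with $\psi=\varphi(t)$, integrates over $t\in[0,\infty)$, rewrites $\int_0^\infty\langle\partial_t u,\varphi\rangle\,\dd t$ via the integration-by-parts formula, and substitutes $u(0)=u_0$ to recover \eqref{localConvDiffWeakest} exactly.

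The main obstacle I anticipate is the vector-valued step in the forward direction: upgrading the family of scalar distributional identities $\frac{\dd}{\dd t}(u(t),\psi)_{L^2(\HH)}=F(t)(\psi)$, valid for each individual $\psi$, into the single assertion that the $H^{-1}(\HH)$-valued map $u$ is weakly differentiable with derivative $F\in L^2_\loc([0,\infty),H^{-1}(\HH))$. This requires checking that the duality pairing commutes with Bochner integration and a separability argument to handle the exceptional null sets uniformly in $\psi$. Once $\partial_t u\in L^2_\loc([0,\infty),H^{-1}(\HH))$ is secured, the integration-by-parts formula applies and the remainder of both implications is routine.
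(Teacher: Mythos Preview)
Your proposal is correct and follows essentially the same route as the paper: product test functions $\theta(t)\psi(x)$ with $\theta(0)=0$ to identify $\partial_t u$ in $H^{-1}$ for the forward direction, then testing \eqref{localConvDiffWeak} with $\psi=\varphi(t)$ and integrating by parts in time for the converse. The only cosmetic differences are that the paper defers your explicit $L^2_\loc H^{-1}$ bound and separability argument to a citation of Cazenave--Haraux, and it recovers $u(0)=u_0$ via an explicit piecewise-linear hat function $\eta$ rather than your comparison argument; both are standard and equivalent.
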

\begin{proof} Let $T>0$ fixed.
The direct statement follows by considering $\varphi(t,x)=\eta(t)\psi(x)$, with $\eta\in C_c^1([0,T))$ and $\psi\in H^1(\HH)$. Indeed, we choose $\eta$ such that $\eta(0)=0$ and obtain the first line in \eqref{localConvDiffWeak} by \cite[Definition 1.4.28]{cazenave}.
Next, we remark that, by density, \eqref{localConvDiffWeakest} also takes place when $\eta\in H^1([0,T])$ with $\eta(T)=0$. Choosing
\[\eta(t)=\left\{
\begin{array}{ll}
-\frac{t}{r}+1, & t\in [0,r] \\
0, & \text{otherwise}
\end{array}\right.
\]
we obtain, by letting $r\to 0$, that $u(0)=u_0$.

The converse statement follows by replacing $\psi$ with $\varphi(t)$ in the first line of \eqref{localConvDiffWeak}  and using \cite[Theorem 1.4.35]{cazenave}.
\end{proof}
Next, we prove the uniqueness of weak solutions:
\begin{proposition}
\label{uniquenessLocalWeakConvDiff}
For every $u_0\in L^2(\HH)\cap L^\infty(\HH)$, there exists at most one $u$ satisfying the properties in Proposition \ref{equivLocalWeakConvDiff}.
\end{proposition}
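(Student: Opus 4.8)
The plan is to run the standard energy method and close it with Gr\"onwall's inequality. Suppose $u_1,u_2\in\mathcal{L}$ are two weak solutions sharing the initial datum $u_0$, and set $w:=u_1-u_2$. By Proposition \ref{equivLocalWeakConvDiff}, each $u_i$ satisfies the formulation \eqref{localConvDiffWeak} with $\partial_t u_i\in L^2_\loc([0,\infty),H^{-1}(\HH))$; subtracting, we get $w\in L^2_\loc([0,\infty),H^1(\HH))$, $\partial_t w\in L^2_\loc([0,\infty),H^{-1}(\HH))$, $w(0)=0$, and for a.e.\ $t\geq 0$ and every $\psi\in H^1(\HH)$,
\[
\langle \partial_t w(t),\psi\rangle_{H^{-1},H^1}+A\int_\HH \nabla w(t)\cdot\nabla\psi\,\dmu = \int_\HH\bigl(f(u_1(t))-f(u_2(t))\bigr)X\cdot\nabla\psi\,\dmu.
\]

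The crucial step is to test this identity with $\psi=w(t)$. This is legitimate precisely because $w\in L^2_\loc(H^1)$ with $\partial_t w\in L^2_\loc(H^{-1})$: the integration-by-parts formula for the Gelfand triple $H^1(\HH)\hookrightarrow L^2(\HH)\hookrightarrow H^{-1}(\HH)$ (\cite[Theorem 1.4.35]{cazenave}) ensures that $t\mapsto\|w(t)\|_{L^2(\HH)}^2$ is absolutely continuous on compact intervals with $\frac{d}{dt}\|w(t)\|_{L^2}^2=2\langle\partial_t w(t),w(t)\rangle$. Choosing $\psi=w(t)$ therefore yields, for a.e.\ $t$,
\[
\frac12\frac{d}{dt}\|w(t)\|_{L^2(\HH)}^2+A\|\nabla w(t)\|_{L^2(\HH)}^2=\int_\HH\bigl(f(u_1)-f(u_2)\bigr)X\cdot\nabla w\,\dmu.
\]

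Next I would bound the right-hand side. Since $u_1,u_2\in L^\infty([0,\infty),L^\infty(\HH))$ with a common bound $L$ and $f(r)=|r|^{q-1}r$ is locally Lipschitz, there is a constant $L_f=L_f(L)$ with $|f(u_1)-f(u_2)|\leq L_f|w|$ a.e.; together with $\|X\|_{L^\infty(\HH)}\leq C_X<\infty$ and Cauchy--Schwarz this gives the estimate $L_f C_X\|w\|_{L^2}\|\nabla w\|_{L^2}$ for that integral. Absorbing the gradient factor by Young's inequality, $L_f C_X\,ab\leq A b^2+\tfrac{(L_fC_X)^2}{4A}a^2$, cancels the $A\|\nabla w\|_{L^2}^2$ term on the left and leaves the differential inequality $\frac{d}{dt}\|w(t)\|_{L^2}^2\leq \tfrac{(L_fC_X)^2}{2A}\|w(t)\|_{L^2}^2$. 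As $\|w(0)\|_{L^2}=0$, Gr\"onwall's lemma forces $\|w(t)\|_{L^2(\HH)}=0$ for all $t\geq 0$, hence $u_1=u_2$.

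I expect the only genuine obstacle to be the justification of testing with $w(t)$ itself, i.e.\ the chain rule $\frac{d}{dt}\|w\|_{L^2}^2=2\langle\partial_t w,w\rangle$ in the Gelfand triple over the noncompact manifold $\HH$. This fact is purely functional-analytic and transfers verbatim from the abstract Hilbert-space statement of \cite{cazenave} without seeing the geometry of $\HH$, so the remaining estimates are elementary. It is worth emphasizing that the $L^\infty$ hypothesis on $u_0$ (and hence on the solutions) enters exactly here, making $f$ effectively globally Lipschitz on the relevant range and thereby controlling the nonlinear convection term.
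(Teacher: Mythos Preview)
Your proof is correct and follows essentially the same approach as the paper's own argument: subtract the two weak formulations, test with the difference $w(t)$ using the Gelfand-triple chain rule (the paper cites \cite[Chapter III, Lemma 1.2]{temam} rather than \cite{cazenave}), bound the nonlinear term via the local Lipschitz property of $f$ on the common $L^\infty$ range and the boundedness of $X$, absorb the gradient with Young's inequality, and conclude by Gr\"onwall. The only differences are cosmetic (naming, constants, and the reference invoked for the chain rule).
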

\begin{proof}
Let $T>0$, $u_1, u_2\in \mathcal{L}$ with $\partial_t u_1,\partial_t u_2\in L^2([0,T],H^{-1}(\HH))$
and such that $u_1,u_2$ satisfy \eqref{localConvDiffWeak} with $u_1(0)=u_2(0)=u_0$. Denoting $v=u_1-u_2$, we obtain that, for every $\psi\in H^1(\HH)$,
\begin{equation}
\label{localWeakForV}
\langle \partial_t v(t),\psi\rangle_{H^{-1}(\HH),H^1(\HH)} + A \int_\HH \nabla v(t)\cdot \nabla \psi \,\dmu(x) = \int_\HH [f(u_1(t))-f(u_2(t))]X(x)\cdot \nabla \psi \,\dmu(x).
\end{equation}
Testing the equality above against $\psi=v(t)$, we get (see \cite[Chapter III, Lemma 1.2]{temam}) that:
\[\frac{d}{dt}\int_\HH |v(t)|^2 \dmu(x) +A \int_\HH |\nabla v|^2\dmu(x) = \int_\HH [f(u_1)-f(u_2)]X\cdot \nabla v(t) \dmu(x)\]
Since $u_1,u_2\in L^\infty([0,\infty),L^\infty(\HH))$, $f$ is locally Lipschitz, and $X$ is a bounded vector field, it follows that there exists a constant $L\geq 0$ such that:
\[\frac{d}{dt} \int_\HH |v(t)|^2 \dmu(x)  +A \int_\HH |\nabla v|^2\dmu(x) \leq L  \int_\HH |v|\, |\nabla v| \dmu(x) .\]
The inequality $ab\leq \frac{L}{4A}a^2+ \frac{A}{L}b^2$ applied in the RHS above implies that:
\[\frac{d}{dt} \int_\HH |v(t)|^2 \dmu(x) \leq \frac{L^2}{4A} \int_\HH |v(t)|^2 \dmu(x).\]
By Gronwall's lemma, it follows that $v=0$, so the weak solution is unique.
\end{proof}

\section{Non-local non-linear convection-diffusion on hyperbolic space}
\label{section:nonlocalNonLinearConvDiff}
This section is concerned with the basic analysis of the non-local non-linear convection-diffusion problem \eqref{IntroNonlocalConvDiff}.
\subsection{A priori norm estimates}
We will study problem \eqref{IntroNonlocalConvDiff} for the initial data
\[u_0\in \mathcal{Z}:=L^1(\HH)\cap L^\infty(\HH)\]
and we will look for  solutions $u\in C^1([0,T],\mathcal{Z})$.
Before proving the existence and uniqueness of solutions, we need some a priori estimates on the $L^1$ and $L^\infty$-norms:
\begin{proposition}
\label{L1LinftyNormControl}Assume that $J:[0,\infty)\to [0,\infty)$ is such that $\int_0^\infty J(r)(\sinh(r))^{N-1}\dd r<\infty$ and $G:\HH\times\HH\to [0,\infty)$ satisfies \eqref{condition.integrabilityG} and \eqref{GSymmetricintegral.eq}. Also let $f$ be a non-decreasing, locally Lipschitz real function.

If $T>0$, $u_0\in \mathcal{Z}$ and $u\in C^1([0,T],\mathcal{Z})$ is a solution of \eqref{IntroNonlocalConvDiff}, then $\|u(t)\|_{L^1(\HH)}\leq \|u_0\|_{L^1(\HH)}$ and $\|u(t)\|_{L^\infty(\HH)}\leq \|u_0\|_{L^\infty(\HH)}$, for every $t\in [0,T]$.
\end{proposition}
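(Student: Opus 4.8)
The plan is to derive both bounds from Gronwall-type inequalities obtained by testing \eqref{IntroNonlocalConvDiff} against suitable (approximate) convex entropies $\Theta$, exploiting two structural facts: the diffusion kernel $J(d(x,y))$ is symmetric in $x,y$, so its operator dissipates \emph{every} convex entropy after symmetrisation; and the convection kernel satisfies the integrability \eqref{condition.integrabilityG} together with the dissipativity identity \eqref{GSymmetricintegral.eq}. Since the extremum of $u$ need not be attained on the non-compact space $\HH$, I would avoid a pointwise maximum principle and instead control the functionals $t\mapsto\int_\HH\Theta(u(t))\dmu(x)$, approximating the Lipschitz $\Theta$ by smooth convex $\Theta_\delta$; all interchanges of limits are legitimate because $u(t)\in L^1(\HH)\cap L^\infty(\HH)$ while $\int_\HH J(d(x,\cdot))\dmu<\infty$ and $\sup_x\int_\HH G(x,y)\dmu(y)\le M$ keep every double integral absolutely convergent. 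Throughout I set $R:=\sup_{t\in[0,T]}\|u(t)\|_{L^\infty(\HH)}<\infty$ (finite since $u\in C^1([0,T],\mathcal{Z})$) and let $L$ be the Lipschitz constant of $f$ on $[-R,R]$.

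For the $L^\infty$ bound, put $M=\|u_0\|_{L^\infty(\HH)}$, $w=(u-M)_+$ and $\phi(t)=\int_\HH w(t)\dmu(x)$, so that $\phi(0)=0$. I choose $S_\delta$ convex with $S_\delta'$ nondecreasing, $0\le S_\delta'\le1$ and $S_\delta'\equiv0$ on $(-\infty,M]$, and compute $\frac{d}{dt}\int_\HH S_\delta(u)=\int_\HH S_\delta'(u)\,\partial_t u\,\dmu(x)$. Symmetrising the diffusion contribution in $x,y$ rewrites it as
\[
\tfrac12\iint_{\HH\times\HH} J(d(x,y))\,(u(y)-u(x))\,(S_\delta'(u(x))-S_\delta'(u(y)))\,\dmu(x)\,\dmu(y)\le 0,
\]
because $S_\delta'$ is nondecreasing. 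For the convection contribution I split $f(u(y))-f(u(x))=(f(u(y))-f(M))-(f(u(x))-f(M))$: the diagonal piece $-\int_\HH S_\delta'(u(x))(f(u(x))-f(M))\big(\int_\HH G(x,y)\dmu(y)\big)\dmu(x)$ is $\le0$, since $S_\delta'(u(x))>0$ forces $u(x)>M$ and hence $f(u(x))\ge f(M)$; the off-diagonal piece is bounded using $0\le S_\delta'\le1$ and $f(u(y))-f(M)\le L\,(u(y)-M)_+=L\,w(y)$, giving at most $L\iint G(x,y)w(y)\,\dmu(x)\dmu(y)\le LM\,\phi(t)$ after $\sup_y\int_\HH G(x,y)\dmu(x)\le M$. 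Integrating in time and letting $\delta\to0$ yields $\phi(t)\le LM\int_0^t\phi(s)\,ds$, so Gronwall forces $\phi\equiv0$, i.e.\ $u\le M$ a.e. Applying the same argument to $v=-u$, which solves the problem with the nondecreasing locally Lipschitz nonlinearity $\tilde f(r)=-f(-r)$, gives $u\ge -M$, whence $\|u(t)\|_{L^\infty(\HH)}\le\|u_0\|_{L^\infty(\HH)}$.

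For the $L^1$ bound I take $\beta_\delta$ smooth convex with $\beta_\delta'$ nondecreasing, $|\beta_\delta'|\le1$ and $\beta_\delta'\to\operatorname{sgn}$. The diffusion term is again $\le0$ by the same symmetrisation. For convection I use $f(u(y))\beta_\delta'(u(x))\le|f(u(y))|$ to bound its off-diagonal part by $\iint_{\HH\times\HH} G(x,y)|f(u(y))|\dmu(x)\dmu(y)=\int_\HH|f(u(x))|\big(\int_\HH G(y,x)\dmu(y)\big)\dmu(x)$, while the diagonal part $\int_\HH f(u(x))\beta_\delta'(u(x))\big(\int_\HH G(x,y)\dmu(y)\big)\dmu(x)$ tends, as $\delta\to0$, to $\int_\HH|f(u(x))|\big(\int_\HH G(x,y)\dmu(y)\big)\dmu(x)$ because $f(r)\operatorname{sgn}(r)=|f(r)|$ (here I use $f(0)=0$, which holds for the model nonlinearity $f(r)=|r|^{q-1}r$). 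Their difference equals $\int_\HH|f(u(x))|\big(\int_\HH[G(y,x)-G(x,y)]\dmu(y)\big)\dmu(x)$, which vanishes since \eqref{GSymmetricintegral.eq} gives $\int_\HH[G(y,x)-G(x,y)]\dmu(y)=0$ for a.e.\ $x$. Integrating in time and passing to the limit then produces $\int_\HH|u(t)|\dmu(x)\le\int_\HH|u_0|\dmu(x)$.

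The main obstacle is the convection term, and this is exactly what dictates the two hypotheses on $G$. Unlike the symmetric diffusion term, it is not sign-definite against an arbitrary entropy; in the $L^1$ estimate the only mechanism balancing the ``loss'' at $x$ against the ``gain'' from the $f(u(y))$ terms is the cancellation forced by the dissipativity identity \eqref{GSymmetricintegral.eq}, and \eqref{condition.integrabilityG} is what guarantees the absolute convergence of all the double integrals involved. (For the $L^\infty$ estimate, by contrast, monotonicity of $f$ together with \eqref{condition.integrabilityG} already suffices.) The remaining difficulty is purely technical: because the extremum of $u$ is not attained on $\HH$, one must run the entropy computations through the smooth approximations $S_\delta,\beta_\delta$ and the time-integrated form of Gronwall's inequality rather than through a pointwise maximum principle.
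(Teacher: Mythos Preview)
Your argument is correct, but the route for the $L^\infty$ bound differs from the paper's. For $L^\infty$, the paper proves a preliminary lemma (``integrals on tails''): using the dissipativity identity \eqref{GSymmetricintegral.eq}, it shows that for any $\theta\in L^1(\HH)$ and $\delta\ge 0$,
\[
\int_{\{\theta\ge\delta\}}\int_\HH G(x,y)\,\theta(y)\,\dmu(y)\,\dmu(x)\le \int_{\{\theta\ge\delta\}}\theta(x)\int_\HH G(x,y)\,\dmu(y)\,\dmu(x),
\]
and applies this with $\theta=f(u(t,\cdot))$, $\delta=f(m)$ to get that the convection contribution to $\frac{d}{dt}\int(u-m)_+$ is \emph{directly} $\le 0$, with no Gronwall step. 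Your approach instead splits $f(u(y))-f(u(x))$ around $f(M)$, drops the diagonal piece by sign, and bounds the off-diagonal piece by $L\,M_G\,\phi(t)$ using only the integrability \eqref{condition.integrabilityG}; Gronwall then closes. What this buys you is exactly the observation you make at the end: your $L^\infty$ estimate does not need the dissipativity \eqref{GSymmetricintegral.eq} at all, only \eqref{condition.integrabilityG}, which is a mild strengthening over the paper. What the paper's route buys is a cleaner structural statement (the level-set lemma is reusable) and no dependence on the Lipschitz constant of $f$ in the intermediate bound.

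For the $L^1$ estimate your argument coincides with the paper's, modulo the smoothing: the paper tests directly against $\mathrm{sgn}(u)$ and uses \eqref{GSymmetricintegral.eq} to cancel the two $|f(u)|$-integrals, exactly as you do after $\delta\to 0$. Two small points: you overload the symbol $M$ for both $\|u_0\|_{L^\infty}$ and the bound in \eqref{condition.integrabilityG}; and the reduction to $f(0)=0$ holds for the general nondecreasing locally Lipschitz $f$ in the statement, not just the model nonlinearity, since replacing $f$ by $f-f(0)$ leaves \eqref{IntroNonlocalConvDiff} unchanged.
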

\begin{proof}
Without loss of generality, we can assume $f(0)=0$, since it makes no difference in \eqref{IntroNonlocalConvDiff} if we consider the function $f-f(0)$ instead of $f$.

To prove the $L^1$-estimate, we notice that:
\[\begin{aligned}\frac{d}{dt} \int_\HH |u(t,x)| \dmu(x)&=\int_{\HH\times\HH} J(d(x,y))(u(t,y)-u(t,x)){\rm sgn}(u(t,x)) \dmu(y) \dmu(x) \\
&\quad+\int_{\HH\times\HH} G(x,y)(f(u(t,y))-f(u(t,x))){\rm sgn}(u(t,x)) \dmu(y) \dmu(x).
\end{aligned}\]
We only prove that the second integral in the right-hand side is non-positive since, in fact, the first integral is a particular case of the second one.
\[\begin{aligned}&\int_{\HH\times\HH} G(x,y)(f(u(t,y))-f(u(t,x))){\rm sgn}(u(t,x)) \dmu(y)\dmu(x)\\
&\quad\leq \int_{\HH\times\HH} G(x,y)|f(u(t,y))| \dmu(y) \dmu(x) - \int_{\HH\times\HH} G(x,y)|f(u(t,x))| \dmu(y) \dmu(x),
\end{aligned}\]
which vanishes by \eqref{GSymmetricintegral.eq}.
We have used that, since $f$ is non-decreasing and $f(0)=0$, then ${\rm sgn}(f(r))={\rm sgn}(r).$ Therefore, we have shown
the $L^1$-norm estimate of the solution.

For the $L^\infty$ part, we generalise the results in \cite[Lemma 3.1]{Ignat-Rossi-nonlocal}.
\begin{lemma}
\label{integralOnTails}
Let $G:\HH\times\HH\to [0,\infty)$ satisfy \eqref{condition.integrabilityG} and \eqref{GSymmetricintegral.eq}. Then, for every $\theta\in L^1(\HH)$ and every $\delta\geq 0$,
\[\int_{\{\theta(x)\geq \delta\}} \int_\HH G(x,y) \theta(y) \dmu(y) \dmu(x) \leq \int_{\{\theta(x)\geq \delta\}} \theta(x) \int_\HH G(x,y) \dmu(y) \dmu(x)\]
and
\[\int_{\{\theta(x)\leq -\delta\}} \int_\HH G(x,y) \theta(y) \dmu(y) \dmu(x) \geq \int_{\{\theta(x)\leq -\delta\}} \theta(x) \int_\HH G(x,y) \dmu(y) \dmu(x).\]
\end{lemma}
\begin{proof}[Proof of the Lemma]
We will only prove the first part, since the second one follows by replacing $\theta$ with $-\theta$. We begin with the case $\delta=0$. Using that $-\theta\,\rchi_{\{\theta\leq 0\}}$ and $G$ are non-negative, a change of variables and Tonelli's theorem implies:
\[\begin{aligned}\int_{\{\theta(x)\geq 0\}} &\int_\HH G(x,y) \theta(y) \dmu(y) \dmu(x)\leq \int_{\{\theta(x)\geq 0\}} \int_{\{\theta(y)\geq 0\}} G(x,y) \theta(y) \dmu(y) \dmu(x)\\
&=\int_{\{\theta(x)\geq 0\}} \theta(x) \int_{\{\theta(y)\geq 0\}} G(y,x) \dmu(y) \dmu(x)\leq \int_{\{\theta(x)\geq 0\}} \theta(x)  \int_\HH G(y,x) \dmu(y) \dmu(x).
\end{aligned}\]
The conclusion in the case $\delta=0$ follows then by Proposition \ref{GSymmetricIntegral}.

For $\delta>0$, since $\theta$ is integrable, the set $\{\theta(\cdot)\geq \delta\}$ has finite measure. Therefore,
\[\begin{aligned}&\int_{\{\theta(x)\geq \delta\}} \int_\HH G(x,y) \theta(y) \dmu(y) \dmu(x)\\
&\quad=\int_{\{\theta(x)-\delta \geq 0\}} \int_\HH G(x,y)(\theta(y)-\delta) \dmu(y) \dmu(x)  + \delta \int_{\{\theta(x)-\delta \geq 0\}} \int_\HH G(x,y) \dmu(y) \dmu(x)\\
&\quad\leq \int_{\{\theta(x)-\delta \geq 0\}} \int_\HH G(x,y)[(\theta-\delta)^+(y)] \dmu(y) \dmu(x)  + \delta \int_{\{\theta(x)-\delta \geq 0\}} \int_\HH G(x,y) \dmu(y) \dmu(x)\\
&\quad\leq \int_{\{(\theta-\delta)^+(x) \geq 0\}} \int_\HH G(x,y)[(\theta-\delta)^+(y)] \dmu(y) \dmu(x) + \delta \int_{\{\theta(x)-\delta \geq 0\}} \int_\HH G(x,y) \dmu(y) \dmu(x), 
\end{aligned}\]
where the positive part function is defined as $(\theta-\delta)^+:=\max\{(\theta-\delta),0\}$ and we notice that the set $\{(\theta-\delta)^+(x) \geq 0\}$ is, in fact, the whole space $\HH$.
Further, we apply the case $\delta=0$ above for the function $(\theta-\delta)^+$, which is in $L^1(\HH)$, and obtain:
\[\begin{aligned}&\int_{\{\theta(x)\geq \delta\}} \int_\HH G(x,y) \theta(y) \dmu(y) \dmu(x) \\
&\quad\leq \int_{\{(\theta-\delta)^+(x) \geq 0\}} [(\theta-\delta)^+(x)] \int_\HH G(x,y)\dmu(y) \dmu(x)  + \delta \int_{\{\theta(x)-\delta \geq 0\}} \int_\HH G(x,y) \dmu(y) \dmu(x)\\
&\quad= \int_{\{\theta(x)-\delta \geq 0\}} (\theta(x)-\delta)\int_\HH G(x,y)\dmu(y) \dmu(x)  + \delta \int_{\{\theta(x)-\delta \geq 0\}} \int_\HH G(x,y) \dmu(y) \dmu(x)
\end{aligned}\]
The conclusion of the lemma follows.
\end{proof}
We return to the proof of Proposition \ref{L1LinftyNormControl} and denote $m:=\|u_0\|_{L^\infty(\HH)}$. Then,
\begin{equation}\label{LInftyyNorm.equality}
\begin{aligned}\frac{d}{dt} \int_\HH (u(t,x)-m)^+ \dmu(x) =&\int_{\HH\times\HH} J(d(x,y))(u(t,y)-u(t,x))\,\rchi_{\{u(t,\cdot)\geq m\}}(x) \dmu(y) \dmu(x) \\
&  \int_{\HH\times\HH} G(x,y)(f(u(t,y))-f(u(t,x)))\, \rchi_{\{u(t,\cdot)\geq m\}}(x) \dmu(y) \dmu(x) .
\end{aligned}
\end{equation}
We clain that the second integral in the RHS is non-negative and the first one can be treated similarly. Indeed, since $f$ is non-decreasing, the second integral becomes:
\[\int_{\{f(u(t,\cdot))\geq f(m)\}} \int_\HH G(x,y)(f(u(t,y))-f(u(t,x))) \dmu(y) \dmu(x).\]
Since $u(t)\in \mathcal{Z}$ and $f$ is locally Lipschitz, it follows that $f(u(t,\cdot))$ is integrable, so we can use Lemma \ref{integralOnTails} to deduce that:
\[\int_{\{f(u(t,\cdot))\geq f(m)\}} \int_\HH G(x,y)(f(u(t,y)) \dmu(y) \dmu(x) \leq \int_{\{f(u(t,\cdot))\geq f(m)\}} \hspace{-0.6cm}f(u(t,x)) \int_\HH G(x,y) \dmu(y) \dmu(x),\]
which  shows our claim. Then, going back to
equation \eqref{LInftyyNorm.equality}, we conclude that, for every $t\in[0,T]$, $u(t)\leq m$ almost everywhere. Similarly, one can prove that $u(t)\geq -m$ a.e. and obtain the desired $L^\infty$-norm estimate for $u(t)$.
\end{proof}

\subsection{Existence and uniqueness} The existence and uniqueness result for the problem \eqref{IntroNonlocalConvDiff} is a classical application of Banach's Contraction Principle:
\begin{theorem}Let $J$, $G$ and $f$ as in Proposition \ref{L1LinftyNormControl}. Then,
for any  $u_0\in \mathcal{Z}=L^1(\HH)\cap L^\infty(\HH)$ there exists a unique solution $u\in C^1([0,\infty),\mathcal{Z})$ of the problem \eqref{IntroNonlocalConvDiff}.
\end{theorem}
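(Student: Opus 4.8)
The plan is to recast \eqref{IntroNonlocalConvDiff} as an abstract ODE $u'=F(u)$ in the Banach space $\mathcal{Z}=L^1(\HH)\cap L^\infty(\HH)$, normed by $\|u\|_{\mathcal{Z}}=\|u\|_{L^1(\HH)}+\|u\|_{L^\infty(\HH)}$, and to apply the Banach-space version of the Picard-Lindel\"of theorem, i.e.\ the Contraction Principle on $C([0,T],\mathcal{Z})$. Here $F(u)=L_J(u)+\mathcal{N}_G(u)$, with
\[L_J(u)(x)=\int_\HH J(d(x,y))(u(y)-u(x))\dmu(y),\qquad \mathcal{N}_G(u)(x)=\int_\HH G(x,y)(f(u(y))-f(u(x)))\dmu(y).\]

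First I would show that $F$ is locally Lipschitz from $\mathcal{Z}$ to $\mathcal{Z}$. The linear part is bounded: writing $L_J(u)(x)=\int_\HH J(d(x,y))u(y)\dmu(y)-u(x)\int_\HH J(d(x,y))\dmu(y)$ and using polar coordinates, the weight $\int_\HH J(d(x,y))\dmu(y)=\Vol(\mathbb{S}^{N-1})\int_0^\infty J(r)(\sinh r)^{N-1}\,\dd r$ is a finite constant (independent of $x$ by homogeneity of $\HH$) under the stated hypothesis on $J$; a Schur/Young-type estimate, using $J(d(x,y))=J(d(y,x))$, then gives $\|L_J(u)\|_{\mathcal{Z}}\le C_J\|u\|_{\mathcal{Z}}$, and the same bound for differences since $L_J$ is linear. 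For the nonlinear part, fix $R>0$ and let $L_R$ be the Lipschitz constant of $f$ on $[-R,R]$. For $u,v$ with $\|u\|_{L^\infty},\|v\|_{L^\infty}\le R$, set $g=(f\circ u)-(f\circ v)$, so $|g|\le L_R|u-v|$ pointwise, and decompose
\[\mathcal{N}_G(u)(x)-\mathcal{N}_G(v)(x)=\int_\HH G(x,y)g(y)\dmu(y)-g(x)\int_\HH G(x,y)\dmu(y).\]
Both integrability bounds in \eqref{condition.integrabilityG} then yield $\|\mathcal{N}_G(u)-\mathcal{N}_G(v)\|_{\mathcal{Z}}\le 2ML_R\|u-v\|_{\mathcal{Z}}$, where $M$ is the constant in \eqref{condition.integrabilityG}; note that the \emph{second} supremum in \eqref{condition.integrabilityG} is precisely what controls the term $\int_\HH G(x,y)g(y)\dmu(y)$ in the $L^1$ norm after interchanging the order of integration.

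With $F$ locally Lipschitz, the map $\Phi(u)(t)=u_0+\int_0^t F(u(s))\,\dd s$ is a contraction on a closed ball of $C([0,T],\mathcal{Z})$ centred at the constant function $u_0$, provided $T=T(\|u_0\|_{\mathcal{Z}})$ is small enough; this produces a unique local solution of the integral equation. Since $u$ is continuous into $\mathcal{Z}$ and $F$ is continuous, the map $s\mapsto F(u(s))$ is continuous in $\mathcal{Z}$, so the fixed point is in fact in $C^1([0,T],\mathcal{Z})$ and solves \eqref{IntroNonlocalConvDiff}. Uniqueness on any common interval of two solutions follows from the same Lipschitz estimate together with Gronwall's lemma.

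The one genuinely delicate point is globalization, and this is exactly where Proposition \ref{L1LinftyNormControl} is indispensable: because $f$ is only locally Lipschitz, the length $T$ of the local existence interval depends on the size of the solution through $L_R$, so a naive iteration could have $T\to 0$. However, Proposition \ref{L1LinftyNormControl} (which rests on both the monotonicity of $f$ and the dissipativity \eqref{GSymmetricintegral.eq}) guarantees $\|u(t)\|_{L^1(\HH)}\le\|u_0\|_{L^1(\HH)}$ and $\|u(t)\|_{L^\infty(\HH)}\le\|u_0\|_{L^\infty(\HH)}$ for as long as the solution exists. Hence the relevant radius $R=\|u_0\|_{L^\infty(\HH)}$, and therefore the step length $T$, can be kept fixed along the iteration, and a standard continuation argument (if the maximal existence time $T^*$ were finite, then $\|u(t)\|_{\mathcal{Z}}$ would necessarily blow up as $t\to T^*$, contradicting the a priori bound) extends the solution to all of $[0,\infty)$. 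I expect this coupling between the locally-Lipschitz fixed-point estimate and the global $L^1\cap L^\infty$ control to be the main obstacle; the rest is routine Banach-space Picard-Lindel\"of machinery.
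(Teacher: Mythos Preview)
Your argument is correct and follows essentially the same strategy as the paper: Banach contraction in $C([0,T],\mathcal{Z})$ for local existence, $C^1$ regularity from continuity of the right-hand side, and globalisation via the a~priori $L^1$ and $L^\infty$ bounds of Proposition~\ref{L1LinftyNormControl}. The only difference is cosmetic: the paper splits off the linear part $\widetilde{L}_J$, forms the semigroup $S(t)=e^{t\widetilde{L}_J}$, and applies the contraction to the Duhamel map $\Phi(u)(t)=S(t)u_0+\int_0^t S(t-s)L_{G,f}(u(s))\,\dd s$, whereas you integrate the full right-hand side $F=L_J+\mathcal{N}_G$ directly. Since $\widetilde{L}_J$ is already bounded on $\mathcal{Z}$, the semigroup step buys nothing here, and your direct Picard iteration is in fact slightly more elementary; both routes yield the same local time $T=T(\|u_0\|_{\mathcal{Z}})$ and the same globalisation argument.
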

\begin{proof}
We define two operators $\widetilde{L}_J,L_{G,f}:\mathcal{Z}\to \mathcal{Z}$,
\begin{equation}\label{def.LJLGf}
\begin{gathered}
\widetilde{L}_J (w)(x):=\int_\HH J(d(x,y))(w(y)-w(x)) \dmu(y),\\
L_{G,f} (w)(x):= L_G(f(w))=\int_\HH G(x,y)(f(w(y))- f(w(x)) \dmu(y).
\end{gathered}\end{equation}
In fact, the first operator is a particular case of the second one, i.e., $\widetilde{L}_J=L_{J(d(\cdot,\cdot)),{\rm id}}$.

With this notations, problem \eqref{IntroNonlocalConvDiff} can be written in the equivalent form:
\begin{equation}
\label{nonLocalConvDiff.withOperator}
\begin{cases}
\partial_t u(t) =\widetilde{L}_J(u(t))+L_{G,f}(u(t)), & t\geq 0,\\
u(0)=u_0. &
\end{cases}
\end{equation}
Similarly to the proof of Theorem \ref{nonLocalExistence}, taking into account that, for every $x\in \HH$,
$$\int_{\HH} J(d(x,y)) \dmu(y)={\rm Vol}(\mathbb{S}^{N-1}) \int_0^\infty J(r) (\sinh(r))^{N-1} \dd r <\infty,$$
one can prove that $\widetilde{L}_J$ is a bounded linear operator on $\mathcal{Z}$.
Thus, denoting $S(t):=e^{t\widetilde{L}_J}$ the semigoup generated by it, we are looking for fixed points in $C([0,T],\mathcal{Z})$ of the 
operator:
\[(\Phi(u))(t)=S(t) u_0 +\int_0^t S(t-s) L_{G,f}(u(s)) ds.\]
By Banach's fixed point theorem in a ball of $C([0,T],\mathcal{Z})$, we obtain a local solution $u\in C([0,T],\mathcal{Z})$, where the time $T>0$ depends on the $L^1$ and $L^\infty$ norms of the initial data. The $C^1([0,T],\mathcal{Z})$ regularity of the solution follows by the boundedness of the two operators $\widetilde{L}_J$ and $L_{G}$.
The global existence is then a consequence of Proposition \ref{L1LinftyNormControl}.
\end{proof}
The following result will be used to prove $L^2$-norm estimates for the solutions of \eqref{IntroNonlocalConvDiff}.
\begin{proposition}
\label{GFuxFuyuxNegative} Let $G:\HH\times\HH\to [0,\infty)$ satisfy \eqref{condition.integrabilityG} and \eqref{GSymmetricintegral.eq}. Also, consider $f(r)=|r|^{q-1}r$, $q\geq 1$. Then,
\[(L_{G,f}(w),w)_{L^2(\HH)}=\int_{\HH\times\HH} G(x,y) (f(w(y))-f(w(x)))w(x) \dmu(y) \dmu(x)\leq 0,\]
for every $w\in \mathcal{Z}=L^1(\HH)\cap L^\infty(\HH)$.
\end{proposition}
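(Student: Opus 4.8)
The plan is to expand the pairing into a ``mixed'' term and a ``diagonal'' term, and then to dominate the mixed term by the diagonal one through the Fenchel--Young inequality tailored to the power nonlinearity --- precisely at the spot where the linear symmetrization of Theorem~\ref{nonLocalExistence} fails. Setting $g(x):=\int_\HH G(x,y)\dmu(y)$, which is bounded by \eqref{condition.integrabilityG}, I would first write
\[
(L_{G,f}(w),w)_{L^2(\HH)}=\underbrace{\int_{\HH\times\HH}G(x,y)f(w(y))w(x)\dmu(y)\dmu(x)}_{=:A}-\underbrace{\int_{\HH\times\HH}G(x,y)f(w(x))w(x)\dmu(y)\dmu(x)}_{=:B}.
\]
Because $w\in\mathcal{Z}=L^1(\HH)\cap L^\infty(\HH)$ and $g\in L^\infty(\HH)$, both integrals converge absolutely (for instance $|f(w(y))w(x)|\le\|w\|_{L^\infty}^q\,|w(x)|$), so Fubini applies throughout. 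Using $f(r)\,r=|r|^{q+1}$, the diagonal term is simply $B=\int_\HH|w|^{q+1}\,g\,\dmu$.

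The key step is a pointwise bound on the mixed integrand. With the convex function $\Phi(s)=\tfrac{1}{q+1}|s|^{q+1}$, whose derivative is $\Phi'=f$ and whose Fenchel conjugate is $\Phi^*(t)=\tfrac{q}{q+1}|t|^{(q+1)/q}$, the Fenchel--Young inequality $st\le\Phi(s)+\Phi^*(t)$ (valid for all real $s,t$) applied with $s=w(x)$, $t=f(w(y))$ yields, after noting $|f(w(y))|^{(q+1)/q}=|w(y)|^{q+1}$,
\[
w(x)\,f(w(y))\le\frac{1}{q+1}|w(x)|^{q+1}+\frac{q}{q+1}|w(y)|^{q+1}.
\]
Integrating this against $G(x,y)\dmu(y)\dmu(x)$ bounds $A$ by $\tfrac{1}{q+1}P_1+\tfrac{q}{q+1}P_2$, where $P_1=\int_\HH|w(x)|^{q+1}g(x)\dmu(x)$ and $P_2=\int_\HH|w(y)|^{q+1}\bigl(\int_\HH G(x,y)\dmu(x)\bigr)\dmu(y)$.

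It remains to collapse both halves onto $B$. Here I would invoke the dissipativity hypothesis \eqref{GSymmetricintegral.eq}, which says exactly that $\int_\HH G(x,y)\dmu(x)=\int_\HH G(y,x)\dmu(x)=g(y)$ for a.e.\ $y$; hence $P_2=\int_\HH|w|^{q+1}g\,\dmu=P_1=B$. Consequently $A\le\tfrac{1}{q+1}B+\tfrac{q}{q+1}B=B$, so that $A-B\le 0$, which is the claim.

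I expect the main obstacle to be conceptual rather than computational: the symmetrization used for the linear transport problem in Theorem~\ref{nonLocalExistence} relied on the two cross terms $\int G(x,y)w(x)w(y)$ agreeing under relabeling, which is no longer true once $f$ is nonlinear. The resolution is to observe that it is precisely the homogeneity of $f(r)=|r|^{q-1}r$ that makes the Young conjugate of $\Phi(s)=|s|^{q+1}/(q+1)$ reproduce the \emph{same} power $|w|^{q+1}$, so that both summands of the Young bound reduce to the diagonal term $B$ after applying \eqref{GSymmetricintegral.eq}; note this argument is uniform in $q\ge1$ and recovers the linear case when $q=1$. The remaining care is to use the Fenchel inequality with matched conjugate exponents and to keep $f(0)=0$ so that $f(r)\,r=|r|^{q+1}$ holds without a spurious constant.
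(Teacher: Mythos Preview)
Your proof is correct and follows essentially the same route as the paper: split into the mixed term $A$ and the diagonal term $B$, bound the mixed integrand by Young's inequality, and then use the dissipativity identity \eqref{GSymmetricintegral.eq} to collapse both pieces of the bound onto $B$. The only cosmetic difference is that you phrase the key pointwise estimate as Fenchel--Young for $\Phi(s)=\tfrac{1}{q+1}|s|^{q+1}$, whereas the paper invokes the elementary form $a^q b\le \tfrac{q}{q+1}a^{q+1}+\tfrac{1}{q+1}b^{q+1}$ for $a,b\ge 0$; these are the same inequality.
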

\begin{proof}
Since, from Young's inequality $a^qb\leq \frac{q}{q+1}a^{q+1}+\frac{1}{q+1} b^{q+1}$, for any $a,b\geq 0$, we get
\[\begin{aligned}&\int_{\HH\times\HH} G(x,y) \left||w(y)|^{q-1}w(y)w(x)\right| \dmu(y) \dmu(x)\\
&\quad \quad\quad\quad\leq \int_{\HH\times\HH} G(x,y) \left[\frac{q}{q+1}|w(y)|^{q+1}+\frac{1}{q+1} |w(x)|^{q+1} \right] \dmu(y) \dmu(x).
\end{aligned}\]
Since $G$ satisfies \eqref{GSymmetricintegral.eq}, the RHS above is exactly:
\[\int_{\HH\times\HH} G(x,y) |w(x)|^{q+1} \dmu(y) \dmu(x)\]
and thus the desired inequality holds.
\end{proof}

The following consequence is immediate:
\begin{corollary}
\label{L2decay}
Let $J$ and $G$ be as in Proposition \ref{L1LinftyNormControl}, $f(r)=|r|^{q-1}r$, $q\geq 1$ and $u\in C^1([0,\infty),\mathcal{Z})$ a solution of \eqref{IntroNonlocalConvDiff}. Then, the $L^2(\HH)$ norm of $u$ does not increase in time. Moreover, the following energy estimate holds:
\[\|u(T)\|^2_{L^2(\HH)}+\frac{1}{2}\int_0^T\int_{\HH\times\HH} J(d(x,y))(u(t,y)-u(t,x))^2 \dmu(y)\dmu(x)dt \leq \|u_0\|^2_{L^2(\HH)}.\]
\end{corollary}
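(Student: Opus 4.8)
The plan is to run the standard energy computation: multiply the equation by $u(t)$, integrate over $\HH$, exploit the symmetry of the diffusion kernel $J(d(x,y))$ in its two arguments, and invoke Proposition \ref{GFuxFuyuxNegative} to dispose of the convection term. First I would record that $u$ is a $C^1$ curve in $L^2(\HH)$: since $\mathcal{Z}=L^1(\HH)\cap L^\infty(\HH)$ embeds continuously into $L^2(\HH)$ (from $\|w\|_{L^2(\HH)}^2\leq \|w\|_{L^\infty(\HH)}\|w\|_{L^1(\HH)}$, hence $\|w\|_{L^2(\HH)}\leq\tfrac12\|w\|_{\mathcal{Z}}$), the hypothesis $u\in C^1([0,\infty),\mathcal{Z})$ forces $t\mapsto u(t)$ to be $C^1$ into $L^2(\HH)$. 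Consequently $t\mapsto\|u(t)\|_{L^2(\HH)}^2$ is differentiable with $\tfrac{d}{dt}\|u(t)\|_{L^2(\HH)}^2=2(\partial_t u(t),u(t))_{L^2(\HH)}$, and using the operator form \eqref{nonLocalConvDiff.withOperator} this splits as
\[
\tfrac12\tfrac{d}{dt}\|u(t)\|_{L^2(\HH)}^2 = (\widetilde{L}_J(u(t)),u(t))_{L^2(\HH)} + (L_{G,f}(u(t)),u(t))_{L^2(\HH)}.
\]

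I would treat the diffusion term by symmetrisation. Because $J(d(x,y))$ is symmetric in $x$ and $y$, and because the relevant double integral is absolutely convergent (bounded by $4M\|u(t)\|_{L^2(\HH)}^2$ with $M=\sup_{x}\int_\HH J(d(x,y))\dmu(y)<\infty$, via $|a-b|^2\leq 2a^2+2b^2$ and the integrability hypothesis on $J$), the change of variables $x\leftrightarrow y$ gives
\[
(\widetilde{L}_J(u(t)),u(t))_{L^2(\HH)} = -\tfrac12\int_{\HH\times\HH} J(d(x,y))(u(t,y)-u(t,x))^2\,\dmu(y)\dmu(x).
\]
For the convection term, Proposition \ref{GFuxFuyuxNegative} applied to $w=u(t)\in\mathcal{Z}$ yields at once $(L_{G,f}(u(t)),u(t))_{L^2(\HH)}\leq 0$. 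Combining the two contributions produces
\[
\tfrac12\tfrac{d}{dt}\|u(t)\|_{L^2(\HH)}^2 \leq -\tfrac12\int_{\HH\times\HH} J(d(x,y))(u(t,y)-u(t,x))^2\,\dmu(y)\dmu(x)\leq 0,
\]
which already establishes that $\|u(t)\|_{L^2(\HH)}$ is non-increasing.

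Finally, integrating the energy identity over $[0,T]$ \emph{before} discarding the non-positive convection contribution gives
\[
\|u(T)\|_{L^2(\HH)}^2 + \int_0^T\int_{\HH\times\HH} J(d(x,y))(u(t,y)-u(t,x))^2\,\dmu(y)\dmu(x)\,dt \leq \|u_0\|_{L^2(\HH)}^2,
\]
which is in fact slightly stronger than the claimed estimate; the stated coefficient $\tfrac12$ in front of the dissipation integral then follows \emph{a fortiori} since the integrand is non-negative. I do not expect any genuinely hard step here: the only points needing care are the legitimacy of differentiating $\|u(t)\|_{L^2(\HH)}^2$ under $C^1([0,\infty),\mathcal{Z})$ regularity (handled by the embedding $\mathcal{Z}\hookrightarrow L^2(\HH)$) and the absolute convergence required to symmetrise the $J$-integral and apply Fubini (guaranteed by the integrability of $J$ and $u(t)\in L^2(\HH)$), while the two structural inequalities that carry the argument have already been proved in Proposition \ref{GFuxFuyuxNegative}.
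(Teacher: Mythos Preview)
Your proposal is correct and follows essentially the same approach as the paper: multiply the equation by $u(t)$, symmetrise the $J$-term to obtain $(-\widetilde{L}_J u(t),u(t))_{L^2(\HH)}=\tfrac12\int J(d(x,y))(u(t,y)-u(t,x))^2$, and invoke Proposition~\ref{GFuxFuyuxNegative} for the convection term. Your write-up is simply more explicit about the justifications (the embedding $\mathcal{Z}\hookrightarrow L^2(\HH)$ and absolute convergence for Fubini), and your observation that the argument actually yields coefficient $1$ rather than $\tfrac12$ in front of the dissipation integral is correct.
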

\begin{proof}
A change of variables and the Fubini-Tonelli theorem implies that:
\[(-\widetilde{L}_{J}u(t),u(t))_{L^2(\HH)}=\frac{1}{2}\int_{\HH\times\HH} J(d(x,y))(u(t,y)-u(t,x))^2 \dmu(y) \dmu(x).\]
Therefore, multiplying \eqref{nonLocalConvDiff.withOperator} by $u(t)$ and integrating on $[0,T]\times \HH$, the conclusion follows by Proposition \ref{GFuxFuyuxNegative}.
\end{proof}
\begin{remark}
The results in this section can be immediately generalised to arbitrary measure spaces $(X,\mathcal{A},\mu)$ by replacing $J$ with a symmetric kernel (i.e. $J(x,y)=J(y,x)$, $\forall x,y\in X$) which satisfies:
$$\sup_{x\in X}\int_{X} J(x,y) \dmu(y) <\infty$$
and imposing on the kernel $G:X\times X\to [0,\infty)$ the constraints \eqref{condition.integrabilityG} and \eqref{GSymmetricintegral.eq}.
\end{remark}

\section{Relaxation limit for convection-diffusion equation}
\label{section:relaxationConvDiff}
This section is dedicated to the proof of Theorem \ref{convergenceConvDiff.intro}. We recall that the equation \eqref{into.NonlocalConvDiff.eps} can be written as:
\begin{equation}
\label{nonLocalConvDiffEps.withOperator}
\begin{cases}
\partial_t u(t) =\widetilde{L}_{J_\eps}(u(t))+L_{G_\eps,f}(u(t)), & t\geq 0\\
u(0)=u_0, &
\end{cases}
\end{equation}
where $J_\eps$ and $G_\eps$ are defined in \eqref{def.JEps} \eqref{defGEps.intro} and the operators $\widetilde{L}_{J_\eps}$ and $L_{G_\eps,f}$ are given in \eqref{def.LJLGf}.

First, we remark that, in the setting of Theorem \ref{convergenceConvDiff.intro}, $\int_0^\infty J_\eps(r)(\sinh(r))^{N-1}{\rm d}r$ is finite and $G_\eps$ satisfies \eqref{condition.integrabilityG} and \eqref{GSymmetricintegral.eq}. Therefore, we can apply the results in the previous section to obtain well-posedness and $L^1$, $L^2$ and $L^\infty$-norm boundedness for the solutions $u^\eps$ of \eqref{nonLocalConvDiffEps.withOperator}.

Before proceeding to the actual proof, we need some results concerning the sequence of operators $(\widetilde{L}_{J_\eps})_{\eps>0}$.

\begin{lemma}
\label{IntJBoundedonH1}
Let $J:[0,\infty)\to [0,\infty)$ such that the quantity $\widetilde{M}(J)$ defined in \eqref{J1.intro.eq} is finite. Then, for every $\psi\in H^1(\HH)$ and $\eps\in (0,1)$,
\[(-\widetilde{L}_{J_\eps}\psi,\psi)_{L^2(\HH)}=\frac{\eps^{-N-2}}2\int_{\HH\times\HH} J\left(\frac{d(x,y)}{\eps}\right)(\psi(y)-\psi(x))^2 \dmu(y) \dmu(x)\leq \widetilde{M}(J) \|\nabla \psi\|_{L^2(\HH)}^2.\]
\end{lemma}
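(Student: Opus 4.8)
The plan is to mirror the proof of Lemma~\ref{H1L2Boundedness}, the convection term being replaced here by the purely radial kernel $J$. The stated equality is the standard symmetrisation already recorded in the proof of Corollary~\ref{L2decay}: since $J(d(x,y))$ is symmetric in $x$ and $y$, swapping the two variables and averaging turns $(-\widetilde L_{J_\eps}\psi,\psi)_{L^2(\HH)}=-\eps^{-N-2}\int_{\HH\times\HH}J(d(x,y)/\eps)(\psi(y)-\psi(x))\psi(x)\,\dmu(y)\dmu(x)$ into $\tfrac{\eps^{-N-2}}2\int_{\HH\times\HH}J(d(x,y)/\eps)(\psi(y)-\psi(x))^2\,\dmu(y)\dmu(x)$. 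For the inequality it suffices to treat $\psi\in C_c^1(\HH)$ and then pass to general $\psi\in H^1(\HH)$ by density of $C_c^\infty(\HH)$ in $H^1(\HH)$ together with Fatou's lemma applied to the (non-negative) double integral along an $H^1$-approximating sequence converging almost everywhere.

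First I would freeze $x$ and change variables $y=\exp_x(W)$, $W\in T_x\HH$; by Lemma~\ref{diffExp} the Jacobian is $\rho(|W|)$ and $d(x,y)=|W|$, so the inner integral becomes $\int_{T_x\HH}J(|W|/\eps)(\psi(\exp_x W)-\psi(x))^2\rho(|W|)\,\dmu(W)$. Writing $\psi(\exp_x W)-\psi(x)=\int_0^1\frac{\dd}{\dd s}\psi(\exp_x(sW))\,\dd s$ and using that $s\mapsto\exp_x(sW)$ is a geodesic of constant speed $|W|$, the Cauchy--Schwarz inequality gives the pointwise bound $(\psi(\exp_x W)-\psi(x))^2\le |W|^2\int_0^1|\nabla\psi(\exp_x(sW))|^2\,\dd s$.

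Next I would integrate in $x$ and pass to the half-space model, performing in each fibre the substitution $W=x_N V$ with $V\in\RR^N$, so that $|W|=|V|_e$ and $\dmu(W)=\dd V$. After applying Tonelli, for fixed $V$ and $s$ the inner space integral $\int_\HH|\nabla\psi(\exp_x(x_N(sV)))|^2\dmu(x)$ is exactly of the form controlled by Lemma~\ref{lemmaJacobianExpFixedV} (with fixed Euclidean vector $sV$), which bounds it by $e^{(N-1)s|V|_e}\|\nabla\psi\|_{L^2(\HH)}^2\le e^{(N-1)|V|_e}\|\nabla\psi\|_{L^2(\HH)}^2$. The $\dd s$-integral then yields $1$, and converting to polar coordinates ($\dd V=\Vol(\mathbb S^{N-1})r^{N-1}\dd r$, $\rho(r)=(\sinh r/r)^{N-1}$, so that $r^2\rho(r)e^{(N-1)r}r^{N-1}=r^2(e^r\sinh r)^{N-1}$) leaves $\tfrac{\eps^{-N-2}}2\Vol(\mathbb S^{N-1})\int_0^\infty J(r/\eps)\,r^2(e^r\sinh r)^{N-1}\dd r$ times $\|\nabla\psi\|_{L^2(\HH)}^2$.

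Finally, the rescaling $r=\eps\tau$ reduces $\eps^{-N-2}r^2(e^r\sinh r)^{N-1}\dd r$ to $\eps^{1-N}\tau^2(e^{\eps\tau}\sinh(\eps\tau))^{N-1}\dd\tau=\tau^2\big(e^{\eps\tau}\sinh(\eps\tau)/\eps\big)^{N-1}\dd\tau$. The point I expect to be the crux is the elementary monotonicity estimate $e^{\eps\tau}\sinh(\eps\tau)/\eps\le e^\tau\sinh\tau$ for $\eps\in(0,1)$, $\tau\ge0$: it follows from $e^{\eps\tau}\le e^\tau$ together with the fact that $\eps\mapsto\sinh(\eps\tau)/\eps$ is non-decreasing (setting $u=\eps\tau$, the numerator $u\cosh u-\sinh u$ vanishes at $0$ and has derivative $u\sinh u\ge0$). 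With this, the integrand is dominated by $\tfrac12\tau^2(e^\tau\sinh\tau)^{N-1}\le\tfrac12(1+\tau^2)(e^\tau\sinh\tau)^{N-1}$, and integrating recovers $\tfrac12\widetilde M(J)\|\nabla\psi\|_{L^2(\HH)}^2\le\widetilde M(J)\|\nabla\psi\|_{L^2(\HH)}^2$, which is the claim (with room to spare). The only genuine difficulty is bookkeeping the two coupled changes of variables so that every Jacobian factor lands precisely on $\widetilde M(J)$; the geometric inputs (the Jacobian $\rho$, the constant geodesic speed, and Lemma~\ref{lemmaJacobianExpFixedV}) are exactly those already used in Lemma~\ref{H1L2Boundedness}.
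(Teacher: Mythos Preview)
Your proof is correct and essentially the same as the paper's. The only cosmetic difference is the placement of the $\eps$-rescaling: the paper performs the substitution $W\mapsto\eps W$ immediately after the exponential change of variables (so that $J(|W|/\eps)$ becomes $J(|W|)$ and the Jacobian becomes $\rho(\eps|W|)$, which is then bounded by $\rho(|W|)$ using that $\rho$ is increasing), whereas you postpone the rescaling to the end and instead invoke the monotonicity $e^{\eps\tau}\sinh(\eps\tau)/\eps\le e^\tau\sinh\tau$; these two monotonicity statements are equivalent, and the remaining steps (Lemma~\ref{diffExp}, the Cauchy--Schwarz estimate along the geodesic, the half-space substitution $W=x_N V$, and Lemma~\ref{lemmaJacobianExpFixedV}) are identical.
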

\begin{proof} The equality above follows as in the proof of Corollary \ref{L2decay}. For the inequality, let us denote $I_J^\eps(\psi):=2 (-\widetilde{L}_{J_\eps}\psi,\psi)_{L^2(\HH)}$. 
For every $x\in \HH$, we change the variables $y=\exp_x(W)$ as in the proof of Proposition \ref{H1L2Boundedness} and get:
\[\begin{aligned}I^\eps_J(\psi)&=\eps^{-N-2}\int_\HH \int_{T_x\HH} J\left(\frac{1}{\eps}|W|\right)(\psi(\exp_x(W))-\psi(x))^2 \rho(|W|) \dmu(W) \dmu(x)\\
&=\eps^{-2}\int_\HH \int_{T_x\HH} J\left(|W|\right)(\psi(\exp_x(\eps W))-\psi(x))^2  \rho(\eps|W|) \dmu(W) \dmu(x)\\
&=\int_\HH \int_{T_x\HH} J\left(|W|\right)\left(\int_0^1 \nabla \psi(\exp_x(\tau\eps W))\cdot P(x,\exp_x(\tau\eps W))(W) \dd \tau \right)^2  \rho(\eps|W|) \dmu(W) \dmu(x).
\end{aligned}\]
Since the parallel transport is an isometry, $|P(x,\exp_x(\tau\eps W))(W)|=|W|$ and, since $\rho$ is increasing, $\rho(\eps|W|)\leq \rho(|W|)$. Therefore,
\[I^\eps_J(\psi)\leq \int_0^1 \int_\HH \int_{T_x\HH} J(|W|)|W|^2 |\nabla \psi(\exp_x(\tau\eps W))|^2  \rho(|W|) \dmu(W) \dmu(x) \dd \tau.\]
Working on the half-space model, we change the variables $V=\frac{1}{x_N}W$, so $|V|_e=|W|$ and we obtain:
\[I^\eps_J(\psi)\leq\int_0^1 \int_{\RR^N} J(|V|_e)|V|_e^2 \rho(|V|_e) \int_\HH |\nabla \psi(\exp_x(\tau\eps x_N V))|^2 \dmu(x) \dd V \dd \tau.\]
Now, we apply Lemma \ref{lemmaJacobianExpFixedV} and use that $\rho(r)=\left(\frac{\sinh(r)}{r}\right)^{N-1}$ to get that:
\[I^\eps_J(\psi)\leq \int_{\RR^N} J(|V|_e)|V|_e^2 \rho(|V|_e)\, e^{(N-1)|V|_e} \|\nabla \psi\|_{L^2(\HH)}^2 \dd V\leq \widetilde{M}(J) \|\nabla \psi\|_{L^2(\HH)}^2.\]
\end{proof}

\begin{lemma}
\label{uEpsLJEpsEstimateGradient}
Let $J:[0,\infty)\to [0,\infty)$ as in the previous Lemma, $\eps\in(0,1)$ and $v\in L^2([0,T],L^2(\HH))$ for which there exists a constant $\GG>0$ such that:
\begin{equation}
\label{JEpsEstimateB}
\eps^{-N-2}\int_0^T \int_{\HH\times\HH} J\left(\frac{d(x,y)}{\eps}\right)(v(t,y)-v(t,x))^2  \dmu(y) \dmu(x)\dd t \leq \GG.
\end{equation}
Then, for every $\phi\in L^2([0,T],H^1(\HH))$,
\begin{equation}
\label{uEpsLJEpsEstimateGradient.eq}
\left|\left( v,\widetilde{L}_{J_\eps}(\phi)\right)_{L^2([0,T],L^2(\HH))}\right|\leq \sqrt{\widetilde{M}(J)\frac{\GG}{2}}\quad \|\nabla \phi\|_{L^2([0,T]
,L^2(\HH))}.
\end{equation}
\end{lemma}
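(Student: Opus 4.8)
The plan is to exploit the symmetry of the diffusion kernel $J(d(x,y))$ to rewrite the pairing $\left(v,\widetilde{L}_{J_\eps}(\phi)\right)$ as a symmetric double integral of the \emph{product} of the increments of $v$ and $\phi$, and then to apply the Cauchy--Schwarz inequality with respect to the non-negative measure $J(d(x,y)/\eps)\,\eps^{-N-2}\dmu(y)\dmu(x)\dd t$ on $[0,T]\times\HH\times\HH$.

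First I would antisymmetrize. Since $J(d(x,y))$ is symmetric in $x$ and $y$, the same change of variables $x\leftrightarrow y$ used in Corollary \ref{L2decay} and Lemma \ref{IntJBoundedonH1} turns the pairing into
\[
\left(v,\widetilde{L}_{J_\eps}(\phi)\right)_{L^2([0,T],L^2(\HH))}=-\frac{\eps^{-N-2}}{2}\int_0^T\int_{\HH\times\HH} J\!\left(\frac{d(x,y)}{\eps}\right)(v(t,y)-v(t,x))(\phi(t,y)-\phi(t,x))\, \dmu(y)\dmu(x)\dd t.
\]
The absolute value of this expression is then estimated by Cauchy--Schwarz, factoring the integrand as $\bigl[J^{1/2}(v(t,y)-v(t,x))\bigr]\cdot\bigl[J^{1/2}(\phi(t,y)-\phi(t,x))\bigr]$ against the common measure; the integrals defining both factors are finite by the hypothesis \eqref{JEpsEstimateB} and by Lemma \ref{IntJBoundedonH1}, respectively, so the manipulation is legitimate.

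The factor involving $v$ is controlled directly by \eqref{JEpsEstimateB}, which bounds $\eps^{-N-2}\int_0^T\int_{\HH\times\HH}J(d(x,y)/\eps)(v(t,y)-v(t,x))^2\dmu(y)\dmu(x)\dd t$ by $\GG$, giving a factor $\sqrt{\GG}$. The factor involving $\phi$ is exactly $\int_0^T I_J^\eps(\phi(t))\dd t$, which by Lemma \ref{IntJBoundedonH1} (integrated in $t$) is bounded by $2\widetilde{M}(J)\|\nabla\phi\|_{L^2([0,T],L^2(\HH))}^2$, yielding a factor $\sqrt{2\widetilde{M}(J)}\,\|\nabla\phi\|_{L^2([0,T],L^2(\HH))}$. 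Combining these with the prefactor $\tfrac12$ from the antisymmetrization gives
\[
\frac{1}{2}\cdot\sqrt{\GG}\cdot\sqrt{2\widetilde{M}(J)}\,\|\nabla\phi\|_{L^2([0,T],L^2(\HH))}=\sqrt{\widetilde{M}(J)\frac{\GG}{2}}\,\|\nabla\phi\|_{L^2([0,T],L^2(\HH))},
\]
which is precisely \eqref{uEpsLJEpsEstimateGradient.eq}.

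I do not expect a genuine obstacle here: the essential structural inputs (the antisymmetric representation of $\widetilde{L}_{J_\eps}$ and the $H^1$-bound of Lemma \ref{IntJBoundedonH1}) are already in hand, and $v$ is only used through its increments, so the fact that $v\in L^2$ rather than $H^1$ causes no difficulty. The only point requiring care is the bookkeeping of the constant: matching the factor $\tfrac12$ from antisymmetrization against the factor $2$ in the $H^1$-estimate to recover exactly $\sqrt{\widetilde{M}(J)\GG/2}$.
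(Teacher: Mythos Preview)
Your proof is correct and follows essentially the same route as the paper: antisymmetrize using the symmetry of $J(d(x,y))$, apply Cauchy--Schwarz against the non-negative measure $\eps^{-N-2}J(d(x,y)/\eps)\dmu(y)\dmu(x)\dd t$, and bound the two factors by \eqref{JEpsEstimateB} and Lemma \ref{IntJBoundedonH1} respectively. Your constant bookkeeping is accurate; indeed the paper's intermediate display \eqref{JUepsPhiEstimate1} carries a harmless typo ($\GG/2$ should be $\GG/4$), but both arguments land on the same final bound $\sqrt{\widetilde{M}(J)\GG/2}\,\|\nabla\phi\|_{L^2([0,T],L^2(\HH))}$.
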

\begin{proof}
A change of variables implies, via the Fubini-Tonelli theorem, that:
\begin{equation}
\label{LJEpsBilinear}
\begin{aligned}
\left( v,\widetilde{L}_{J_\eps}(\phi)\right)_{L^2([0,T],L^2(\HH))}=\eps^{-N-2}\int_0^T \int_{\HH\times\HH} J\left(\frac{d(x,y)}{\eps}\right)(\phi(t,y)-\phi(t,x))v(t,x) \dmu(y) \dmu(x) \dd t
\\
=-\frac{1}{2}\eps^{-N-2}\int_0^T\int_{\HH\times\HH} J\left(\frac{d(x,y)}{\eps}\right)(v(t,y)-v(t,x))(\phi(t,y)-\phi(t,x)) \dmu(y) \dmu(x)\dd t.
\end{aligned}
\end{equation}
Further, we apply the Cauchy-Schwarz inequality and use estimate \eqref{JEpsEstimateB} to obtain:
\begin{equation}
\label{JUepsPhiEstimate1}
\left( v,\widetilde{L}_{J_\eps}(\phi)\right)_{L^2([0,T],L^2(\HH))}^2\leq \frac{\GG}{2}\eps^{-N-2}\int_0^T \int_{\HH\times\HH} J\left(\frac{d(x,y)}{\eps}\right)(\phi(t,y)-\phi(t,x))^2 \dmu(y) \dmu(x) \dd t.
\end{equation}
Finally, we plug the result of Lemma \ref{IntJBoundedonH1} in \eqref{JUepsPhiEstimate1} and arrive to the conclusion.
\end{proof}

\begin{lemma}
\label{convergenceLJEpsToLaplacian}
Let $J$ satisfy Hypothesis \ref{J1.intro} and $(u^\eps)_{\eps>0}$ a sequence converging weakly to $u$ in $L^2([0,T],L^2(\HH))$. We further assume that there exists a constant $\GG>0$ such that:
\begin{equation}
\label{JEpsEstimateBUEps}
\eps^{-N-2}\int_0^T \int_{\HH\times\HH} J\left(\frac{d(x,y)}{\eps}\right)(u^\eps(t,y)-u^\eps(t,x))^2  \dmu(y) \dmu(x)\dd t \leq \GG,\, \forall \eps\in (0,1).
\end{equation}
Then $u\in L^2([0,T],H^1(\HH))$ and, for every $\varphi\in L^2([0,T],H^1(\HH))$,
\begin{equation}
\label{uEpsLJToGradient}
\lim_{\eps\to 0}\left( u^\eps,\widetilde{L}_{J_\eps}(\varphi)\right)_{L^2([0,T],L^2(\HH))} = -A_J\int_0^T \int_\HH \nabla u(t,y)\cdot \nabla \varphi(t,y) \dmu(y) \dd t.
\end{equation}
\end{lemma}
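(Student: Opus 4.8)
The first assertion—that the weak limit $u$ lies in $L^2([0,T],H^1(\HH))$ with the stated gradient bound—is precisely Theorem \ref{compactnessResultM}(1) applied with $\Lambda=J$: Hypothesis \ref{J1.intro} gives $J$ continuous with $J(0)>0$, and the energy bound \eqref{JEpsEstimateBUEps} is exactly condition \eqref{conditionRhoU}, since $\eps^{-2}\Lambda_\eps(r)=\eps^{-N-2}J(r/\eps)$. So the work is entirely in establishing the convergence \eqref{uEpsLJToGradient}. I would first reduce to smooth test functions: by Lemma \ref{uEpsLJEpsEstimateGradient} the functionals $\varphi\mapsto(u^\eps,\widetilde L_{J_\eps}\varphi)$ are bounded on $L^2([0,T],H^1(\HH))$ by $\sqrt{\widetilde M(J)\GG/2}\,\|\nabla\varphi\|_{L^2([0,T],L^2)}$ uniformly in $\eps\in(0,1)$, and the candidate limit is bounded by $A_J\|\nabla u\|\,\|\nabla\varphi\|$. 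An equiboundedness argument then reduces \eqref{uEpsLJToGradient} to $\varphi$ in the dense class $C_c^\infty((0,T)\times\HH)$.

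For such $\varphi$, I would symmetrize as in \eqref{LJEpsBilinear}, change variables $y=\exp_x(W)$ (Jacobian $\rho(|W|)$ by Lemma \ref{diffExp}), and rescale $W\mapsto\eps W$, obtaining
\[(u^\eps,\widetilde L_{J_\eps}\varphi)=-\tfrac12\int_0^T\!\!\int_\HH\!\!\int_{T_x\HH} J(|W|)\,D^\eps_u\,D^\eps_\varphi\,\rho(\eps|W|)\,\dmu(W)\dmu(x)\dd t,\]
where $D^\eps_w(t,x,W)=\eps^{-1}\bigl(w(t,\exp_x(\eps W))-w(t,x)\bigr)$. For the smooth factor, the fundamental theorem of calculus combined with Lemma \ref{diffExp} yields $D^\eps_\varphi(t,x,W)=\int_0^1\nabla\varphi(\exp_x(\tau\eps W))\cdot P(x,\exp_x(\tau\eps W))(W)\,\dd\tau\to\nabla\varphi\cdot W$ pointwise, and Lemma \ref{lemmaJacobianExpFixedV} provides an $\eps$-uniform $L^2$ domination, so $D^\eps_\varphi\to\nabla\varphi\cdot W$ strongly against the weight $J(|W|)\rho(\eps|W|)\,\dmu(W)\dmu(x)\dd t$.

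The heart of the proof is the \emph{weak} convergence $D^\eps_u\rightharpoonup\nabla u\cdot W$, and this is where I expect the main obstacle. The rescaled form of \eqref{JEpsEstimateBUEps} gives $\int J(|W|)|D^\eps_u|^2\rho(\eps|W|)\leq\GG$, so $D^\eps_u$ is bounded (hence weakly precompact) in the weighted $L^2$ space; the task is to identify the limit. Testing against a product $\Theta(t,x)\beta(W)$ with $\Theta\in C_c^\infty$ and using the half-space identification $W=x_NV$ (so the map $x\mapsto\exp_x(\eps x_NV)$ of Lemma \ref{lemmaJacobianExpFixedV} is available), I would move the difference quotient onto $\Theta$ by the change of variables $z=\exp_x(\eps W)$, writing the inner pairing as $(u^\eps,R^\eps_V)$ where $R^\eps_V\to-\dv(\Theta X_V)$ strongly (here $X_V$ is the vector field $x\mapsto x_NV$); since $u^\eps\rightharpoonup u$ and $u\in H^1(\HH)$, the weak-gradient characterization \eqref{H1WithDiv} gives the limit $-\,(u,\dv(\Theta X_V))=\int\Theta\,(\nabla u\cdot W)$. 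The genuine difficulty is that the energy bound controls $D^\eps_u$ only \emph{after} integrating in $W$, with no per-$W$ $L^2$ bound; I resolve this by estimating the pairing directly as $|(u^\eps,R^\eps_V)|\leq\|u^\eps\|_{L^2}\|R^\eps_V\|_{L^2}\leq C(1+|V|_e)$, which is integrable against $J$ on bounded $W$-shells, so dominated convergence in $V$ applies, while the tails $|W|$ large are discarded uniformly in $\eps$ via Cauchy--Schwarz, the bound $\GG$, and the integrability in \eqref{J1.intro.eq} of $J(r)(1+r^2)(e^r\sinh r)^{N-1}$.

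With strong convergence of $D^\eps_\varphi$, weak convergence of $D^\eps_u$, and $\rho(\eps|W|)\to1$, weak--strong convergence yields
\[(u^\eps,\widetilde L_{J_\eps}\varphi)\to-\tfrac12\int_0^T\!\!\int_\HH\!\!\int_{T_x\HH} J(|W|)(\nabla u\cdot W)(\nabla\varphi\cdot W)\,\dmu(W)\dmu(x)\dd t.\]
Finally I would evaluate the fibre integral by isotropy of $J(|W|)\,\dmu(W)$ on $T_x\HH$: for any $a,b\in T_x\HH$,
\[\int_{T_x\HH}J(|W|)(a\cdot W)(b\cdot W)\,\dmu(W)=\frac1N\left(\int_{T_x\HH}J(|W|)|W|^2\,\dmu(W)\right)(a\cdot b),\]
and the half-space identification $W=x_NV$, $\dmu(W)=\dd V$, $|W|=|V|_e$ gives $\int_{T_x\HH}J(|W|)|W|^2\,\dmu(W)=\int_{\RR^N}J(|V|_e)|V|_e^2\,\dd V=2NA_J$. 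Substituting produces the factor $-A_J$ and exactly the right-hand side of \eqref{uEpsLJToGradient}, completing the proof.
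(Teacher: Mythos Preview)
Your reduction to $u\in L^2([0,T],H^1)$ via Theorem \ref{compactnessResultM}(1) and the density step via Lemma \ref{uEpsLJEpsEstimateGradient} match the paper. The core argument, however, takes a genuinely different route.

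The paper never symmetrizes for the smooth test function; it works directly with $\widetilde L_{J_\eps}(\psi)$ for $\psi\in C_c^\infty(\HH)$ and $J$ compactly supported, performs a \emph{second-order} Taylor expansion of $F_y(W)=\psi(\exp_y W)$, observes that the first-order term vanishes against the radial weight $J(|W|)\rho(\eps|W|)$, and passes to the limit by dominated convergence to obtain $\widetilde L_{J_\eps}(\psi)\to A_J\Delta\psi$ \emph{strongly in $L^2(\HH)$}. Then $(u^\eps,\widetilde L_{J_\eps}\varphi)\to(u,A_J\Delta\varphi)$ follows immediately from the weak convergence $u^\eps\rightharpoonup u$ in $L^2$, and one integration by parts (licensed by $u\in H^1$) gives \eqref{uEpsLJToGradient}. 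The compact-support assumption on $J$ is removed in a third step by approximation and Lemma \ref{uEpsLJEpsEstimateGradient}.

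Your approach symmetrizes first and then needs both the strong convergence $D^\eps_\varphi\to\nabla\varphi\cdot W$ and the weak convergence $D^\eps_u\rightharpoonup\nabla u\cdot W$ in a weighted $L^2$ space whose weight $\rho(\eps|W|)$ itself varies with $\eps$. The weak-limit identification you sketch---moving the difference quotient onto $\Theta$ via $z=\mathcal T_{\eps V}(x)$ and invoking \eqref{H1WithDiv}---is plausible and can be made rigorous, but it is considerably more delicate than the paper's argument: all the convergence work in the paper is placed on the \emph{smooth} factor $\varphi$, so no weak-limit identification for the rough sequence $D^\eps_u$ is ever needed. What your route buys is that it stays at first order and treats $u^\eps$ and $\varphi$ symmetrically; what it costs is precisely the step you flag as ``the genuine difficulty''. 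The paper sidesteps that difficulty entirely by going to second order on $\varphi$.
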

\begin{proof}
Since the estimate \eqref{JEpsEstimateBUEps} holds, then the first part of Theorem \ref{compactnessResultM} implies that $u\in L^2([0,T],H^1(\HH))$.
The proof of \eqref{uEpsLJToGradient} consists in three steps:

Step 1: We prove that, for $\psi\in C_c^\infty(\HH)$ and $J$ compactly supported,
\[\lim_{\eps\to 0}\left\|\widetilde{L}_{J_\eps}(\psi)-A_J\Delta \psi\right\|_{L^2(\HH)}= 0,\]
where we recall that by $\Delta$ we understand the Laplace-Beltrami operator on $\HH$. We proceed as in the proof of Theorem \ref{transportConvergence.intro} and denote $F_y:T_y\HH\to \RR$, $F_y(W):=\psi(\exp_y(W))$, the expression of $\psi$ in normal geodesic coordinates around the point $y$. With this notation, by a change of variables we obtain:
\[\widetilde{L}_{J_\eps}(\psi)(y)=\eps^{-2}\int_{T_y\HH} J(|W|)(F_y(\eps W))-F_y(0))\rho(\eps |W|) dW.\]
Next, a Taylor expansion up to order two with integral reminder leads to:
\[F_y(\eps W)-F_y(0)= \eps \nabla F_y(0)\cdot W +\eps^2\int_0^1 (1-\tau) \sum_{i,j=1}^N \partial_{ij} F_y(\tau\eps W)  W_i W_j \dd \tau, \]
where the differentiation $\partial_{ij}$ and the components of $W$ are considered with respect to the same orthonormal basis of $T_y\HH$ we used in the normal coordinates expression of $\psi$ (in particular, for the half-space model, we can use the basis $\left\{y_N\frac{\partial}{\partial y_i}\right\}_{i=1}^N$). The first order term in the Taylor expansion vanishes when we multiply it by the radial function $J(|W|)\rho(\eps|W|)$ and integrate on $T_y\HH$, so we obtain:
\begin{equation}
\label{LJEpsEquality1}
\widetilde{L}_{J_\eps}(\psi)(y)=\int_{T_y\HH} J(|W|)\int_0^1 \sum_{i,j=1}^N\partial_{ij}F_y(\tau\eps W) W_i W_j (1-\tau) d \tau\rho(\eps |W|) \dmu(W).
\end{equation}
Since, for now, we assume that $J$ is compactly supported, we use \eqref{LJEpsEquality1}, together with the fact that $\rho(0)=1$, to obtain by dominated convergence that:
\begin{equation}
\label{LJEps.pointwiseLimit}
\lim_{\eps\to 0} \widetilde{L}_{J_\eps}(\psi)(y)=\int_{T_y\HH} J(|W|) \sum_{i,j=1}^N\partial_{ij}F_y(0) W_i W_j \int_0^1(1-\tau) d \tau \dmu(W),\, \forall y\in \HH.
\end{equation}
Next, a well-known property of normal coordinates on a Riemannian manifold implies that:
\[\Delta \psi(y)=\sum_{i=1}^N \partial_{ii} F_y(0).\]
Therefore, using again that $J(|W|)$ is radial, we obtain:
\begin{equation}
\label{AJLaplPhi}
\begin{aligned}
\int_{T_y\HH} J(|W|)& \sum_{i,j=1}^N\partial_{ij}F_y(0) W_i W_j \int_0^1(1-\tau) d \tau \dmu(W)\\
&=\Delta \psi(y)\frac{1}{2N}\int_{T_y\HH} J(|W|)|W|^2 \dmu(W)=A_J\Delta \psi(y).
\end{aligned}
\end{equation}
Using \eqref{LJEps.pointwiseLimit} and \eqref{AJLaplPhi}, the same dominated convergence argument as in \textit{Step 1} of the proof of Lemma \ref{lemma.LGEpstoX} implies that:
\[\lim_{\eps\to 0}\left\|\widetilde{L}_{J_\eps}(\psi)-A_J\Delta\psi\right\|_{L^2(\HH)}=0,\]
since that $J$ and $\psi$ are compactly supported. The dominated convergence argument is also valid for the time-dependent case, so, for $\varphi\in C_c^\infty((0,T)\times \HH)$ we can use the weak convergence of $(u^\eps)_{\eps>0}$ to $u$ to obtain:
\[\lim_{\eps \to 0}\left( u^\eps,\widetilde{L}_{J_\eps}(\varphi)\right)_{L^2([0,T],\HH)} =A_J\int_0^T \int_\HH u(t,y) \Delta \varphi(t,y) \dmu(y).\]
An integration by parts argument leads to \eqref{uEpsLJToGradient} in the case of $J$ and $\varphi$ compactly supported.

Step 2: We keep the compact support condition on $J$, but we let $\varphi\in L^2([0,T],H^1(\HH))$. Since we know that the convergence \eqref{uEpsLJToGradient} takes place for compactly supported functions, Lemma \ref{uEpsLJEpsEstimateGradient} and the density of $C_c^\infty(\HH)$ in $H^1(\HH)$ imply that the convergence is also valid in this setting.

Step 3: We drop the compact support condition on $J$, replacing it with Hypothesis \ref{J1.intro}. As in the second step in the proof of Lemma \ref{lemma.LGEpstoX}, for every $\eta>0$, we approximate $J$ with a compactly supported function $J^\eta$ such that $J^\eta\leq J$ and $\widetilde{M}(J-J^\eta)\leq \eta$. Therefore,  $(J-J^\eta)$ satisfies the hypothesis of Lemma \ref{uEpsLJEpsEstimateGradient}, which in turn leads to:
\[
\begin{aligned}
\left|\left( u^\eps,\widetilde{L}_{J_\eps}(\varphi)-\widetilde{L}_{J^\eta_\eps}(\varphi)\right)_{L^2([0,T],L^2(\HH))}\right| &\leq \sqrt{\widetilde{M}(J-J^\eta)\frac{\GG}{2}} 
\quad \|\nabla \varphi\|_{L^2([0,T],L^2(\HH))}\\
&\leq  \sqrt{\frac{\eta \GG}{2}} \quad \|\nabla \varphi\|_{L^2([0,T],L^2(\HH))}.
\end{aligned}
\]
The conclusion of the Lemma follows by a triangle inequality argument similar to the one in the proof of Lemma \ref{lemma.LGEpstoX}.
\end{proof}

\begin{proof}[Proof of Theorem \ref{convergenceConvDiff.intro}]
The energy estimate in Corollary \ref{L2decay} reads as follows, for every $T>0$:
\[\|u^\eps(T)\|^2_{L^2(\HH)}+\frac{1}{2}\eps^{-N-2}\int_0^T \int_{\HH\times\HH} J\left(\frac{d(x,y)}{\eps}\right)(u^\eps(t,y)-u^\eps(t,x))^2 \dmu(y) \dmu(x) \dd t\leq \|u_0\|^2_{L^2(\HH)}.\]
This inequality has two consequences:
\begin{equation}
\label{uEpsL2Control}
\|u^\eps(t)\|_{L^2(\HH)}\leq  \|u_0\|_{L^2(\HH)},\, \forall t\geq 0;
\end{equation}
\begin{equation}
\label{JuEpsBounded}
\eps^{-N-2}\int_0^T \int_{\HH\times\HH} J\left(\frac{d(x,y)}{\eps}\right)(u^\eps(t,y)-u^\eps(t,x))^2 \dmu(y) \dmu(x) \dd t\leq 2\int_\HH |u_0|^2\dmu(x),\hspace{0.1cm}\forall\, T>0.
\end{equation}
The inequality \eqref{uEpsL2Control} implies that $(u^\eps)_{\eps>0}$ is bounded in $L^2([0,T],L^2(\HH))$, so, up to a subsequence (which we also denote by $(u^\eps)_{\eps>0}$), it converges weakly to some $u\in L^2([0,T],L^2(\HH))$. Therefore, we are in the setting of Lemma \ref{convergenceLJEpsToLaplacian} with $\GG=2\|u_0\|^2_{L^2(\HH)}$, from which we deduce that $u\in L^2([0,T],H^1(\HH))$. Further, Lemma \ref{uEpsLJEpsEstimateGradient} implies that:
\begin{equation}
\label{uEpsLJEpsEstimateGradientU0}
\left|\left( u^\eps,\widetilde{L}_{J_\eps}(\varphi)\right)_{L^2([0,T],L^2(\HH))}\right|\leq \sqrt{\widetilde{M}(J)}\,\|u_0\|_{L^2(\HH)}\|\nabla \varphi\|_{L^2([0,T],L^2(\HH))}.
\end{equation}

Next,  we study the uniform boundedness of
$\|\partial_t u^\eps\|_{L^2([0,T],H^{-1}(\HH))}$. Indeed, since, by \eqref{LJEpsBilinear}, the operator $\widetilde{L}_{J_\eps}$ is self-adjoint on $L^2(\HH)$, testing \eqref{nonLocalConvDiffEps.withOperator} against an arbitrary $\varphi\in L^2([0,T],H^1(\HH))$ leads to:
\begin{equation}\label{utEpsVarphiEstimate1}
\begin{aligned}
\left|\left( \partial_t u^\eps(t,x), \varphi(t,x)\right)_{L^2([0,T],L^2(\HH))}\right| &\leq \left|\left( u^\eps,\widetilde{L}_{J_\eps}(\varphi)\right)_{L^2([0,T],L^2(\HH))}\right|\\
&\quad + \left|\left( f(u^\eps), L_{G_\eps}^*(\varphi)\right)_{L^2([0,T],L^2(\HH))}\right|.
\end{aligned}
\end{equation}
Further, the Cauchy-Schwarz inequality and Lemma \ref{H1L2Boundedness} imply that:
\begin{equation}\label{fUEpsLGEps.estimate}
\left|\left( f(u^\eps), L_{G_\eps}^*(\varphi)\right)_{L^2([0,T],L^2(\HH))}\right|\leq M(\widetilde{G})\|\nabla \varphi\|_{L^2([0,T],L^2(\HH))}\|f(u^\eps)\|_{L^2([0,T],L^2(\HH))}.
\end{equation}
By Proposition \ref{L1LinftyNormControl}, we obtain that:
\begin{equation}\label{uEpsLInftyBound}
\|u^\eps\|_{L^\infty([0,\infty),L^\infty(\HH))}\leq \|u_0\|_{L^\infty(\HH)}
\end{equation}
and, since, $f(r)=|r|^{q-1}r$, with $q\geq 1$, we have that:
\begin{equation}\label{fUEps.estimate}
\|f(u^\eps)\|_{L^2([0,T],L^2(\HH))} \leq \|u_0\|_{L^\infty}^{q-1} \|u\|_{L^2([0,T],L^2(\HH))}\leq \|u_0\|_{L^\infty}^{q-1}\sqrt{T}\|u_0\|_{L^2(\HH)}.
\end{equation}
Therefore, \eqref{uEpsLJEpsEstimateGradientU0}, \eqref{utEpsVarphiEstimate1}, \eqref{fUEpsLGEps.estimate} and \eqref{fUEps.estimate} lead to: 
\begin{equation}
\label{dtUEpsBound}
\|\partial_t u^\eps\|_{L^2([0,T],H^{-1}(\HH))}\leq \|u_0\|_{L^2(\HH)}\left[\sqrt{\widetilde{M}(J)}+M(\widetilde{G})\|u_0\|_{L^\infty}^{q-1}\sqrt{T}\right].
\end{equation}

As a result, we can apply the second part of Theorem \ref{compactnessResultM} to obtain that up to a subsequence, $(u^\eps)_{\eps>0}$ converges strongly to $u$ in $L^2([0,T],L^2_\loc(\HH))$. Therefore, the continuity of $f$ implies that, up to another subsequence (also denoted by $(u_\eps)_{\eps>0}$), $(f(u^\eps))_{\eps>0}$ converges a.e. in $[0,T]\times \HH$ to $f(u)$.

Next, \eqref{uEpsL2Control} and \eqref{uEpsLInftyBound} imply that the sequence $(f(u^\eps))_{\eps>0}$ is bounded in $L^2([0,T],L^2(\HH))$, which means that, up to a subsequence, it converges weakly in this space. The strong and the pointwise limits obtained above imply that:
\begin{equation}
\label{fUWeakL2}
f(u^\eps)\rightharpoonup f(u)\text{ in }L^2([0,T],L^2(\HH)).
\end{equation}

Until now, we have proven that:
\[u^\eps\rightharpoonup u\text{ and }
f(u^\eps)\rightharpoonup f(u)\text{ weakly in } L^2([0,T],L^2(\HH)).\]
Further, we multiply \eqref{into.NonlocalConvDiff.eps} by a test function $\varphi \in C_c^1([0,T),H^1(\HH))$ and get that:
\begin{equation}
\label{nonlocalConvDiffWeak}
\begin{aligned}
&-\int_0^T \int_\HH u^\eps(t,x)\partial_t \varphi(t,x) \dmu(x) \dd t -\int_\HH u_0(x)\varphi(0,x) \dmu(x) \\
&\quad=\eps^{-N-2}\int_0^T \int_\HH u^\eps(t,y)\int_\HH J\left(\frac{d(x,y)}{\eps}\right)(\varphi(t,x)-\varphi(t,y)) \dmu(x) \dmu(y) \\
 &\quad\quad\quad+\int_0^T \int_\HH f(u^\eps(t,y))\int_\HH G_\eps\left(x,y\right)(\varphi(t,x)-\varphi(t,y)) \dmu(x) \dmu(y)\\
&\quad=\left( u^\eps,\widetilde{L}_{J_\eps}(\varphi)\right)_{L^2([0,T],L^2(\HH))}+ \left(f(u^\eps),L_{G_\eps}^*(\varphi)\right)_{L^2([0,T],L^2(\HH))}.
\end{aligned}
\end{equation}

Lemma \ref{lemma.LGEpstoX} together with \eqref{fUWeakL2} implies that:
\[\lim_{\eps \to 0}\left(f(u^\eps),L_{G_\eps}^*(\varphi)\right)_{L^2([0,T],L^2(\HH)}=\int_0^T \int_\HH f(u(t)) X(\varphi(t)) \dmu(x) \dd t.\]
Therefore, from  Lemma \ref{convergenceLJEpsToLaplacian} and \eqref{nonlocalConvDiffWeak} we deduce that:
\begin{equation}\label{weakLimitUObtained}
\begin{aligned}
& -\int_0^T \int_\HH u(t,x)\partial_t \varphi(t,x)\dmu(x) \dd t -\int_\HH u_0(x)\varphi(0,x) \dmu(x)\\
&\quad = -A_J\int_0^T \int_\HH \nabla u(t,x)\cdot \nabla \varphi(t,x) \dmu(x) \dd t + \int_0^T \int_\HH f(u(t))X(\varphi(t)) \dmu(y) \dd t.
\end{aligned}
\end{equation}

Moreover, passing to the limit in \eqref{dtUEpsBound} (in fact, in the equivalent statement as in \eqref{utL2H-1Equivalence}), we get that $\partial_t u\in L^2([0,T],H^{-1}(\HH))$. We have also proved before that $u\in L^2([0,T],H^1(\HH))$, so a result of Lions and Magenes (see \cite[Chapter III, Lemma 1.2, p.~205]{temam}) implies that:
\[u\in C([0,T],L^2(\HH)).\]
Therefore, also taking \eqref{uEpsLInftyBound} into account, it follows that $u$ is a weak solution of \eqref{localConvDiff.intro}, in the sense of Definition \ref{def.weakLocalConvDiff}. By the uniqueness result in Proposition \ref{uniquenessLocalWeakConvDiff}, the whole initial sequence $(u^\eps)_{\eps>0}$ converges weakly in $L^2([0,T],L^2(\HH))$ and strongly in $L^2([0,T],L^2_\loc(\HH))$ to the unique weak solution of \eqref{localConvDiff.intro}.
\end{proof}

\subsection{Relaxation limit for more general non-linear terms}
\label{sec:generalF}
Some additional restrictions on the kernels $G$ and $J$ allow us, in fact, to prove the main result for more general non-linear terms. More precisely, if we further assume that there exists a constant $c>0$ such that:
\[k_{\widetilde{G}}(r)\leq c\, J(r), \forall r>0,\]
then Theorem \ref{convergenceConvDiff.intro} holds for every non-decreasing locally Lipschitz function $f$.

Indeed, we will prove that the conclusion of Corollary \ref{L2decay} essentially holds and then the rest of the proof of Theorem \ref{convergenceConvDiff.intro} can be straightforwardly adapted. Let us consider $u^\eps\in C^1([0,\infty),\mathcal{Z})$ the solution of \eqref{into.NonlocalConvDiff.eps} and let $a$ be a positive constant that will be chosen later depending on the initial data $u_0\in \mathcal{Z}$. As in Corollary \ref{L2decay}, we get:
\begin{align}
\notag
\frac{d}{dt}\left[ e^{-at} \|u^\eps(t)\|^2_{L^2(\HH)}\right]&= -a\|u^\eps(t)\|^2_{L^2(\HH)} +(\widetilde{L}_{J_\eps}u(t),u(t))_{L^2(\HH)} + (L_{G_\eps,f}(w),w)_{L^2(\HH)}\\ 
\label{eq:expr-D-formal}
 &= -a\|u^\eps(t)\|^2_{L^2(\HH)} -\frac{1}{2}\int_{\HH\times\HH} J_\eps(d(x,y))(u^\eps(t,y)-u^\eps(t,x))^2 \dmu(y) \dmu(x) \\ \notag
& \quad
+ \int_{\HH\times\HH} G_\eps(x,y) (f(u^\eps(t,y))-f(u^\eps(t,x)))u(t,x) \dmu(y) \dmu(x)
\end{align}

Let $M:= \|u_0\|_{L^\infty(\HH)}$. By Proposition \ref{L1LinftyNormControl}, $u(t,x)\in [-M,M]$ for every $t>0$ and almost every $x\in \HH$. Since $f$ is Lipschitz on $[-M,M]$, we obtain that for a.e $x,y\in \HH$,
\[|f(u^\eps(t,y))-f(u^\eps(t,x))|\leq C_M |u^\eps(t,y)-u^\eps(t,x)|, \]
where $C_M$ is the Lipschitz constant of $f$ on $[-M,M]$. Further, this leads to:
\[\begin{aligned}
(L_{G_\eps,f}(u^\eps(t)),u^\eps(t))_{L^2(\HH)} &\leq C_M \int_{\HH\times\HH} G_{\eps}(x,y) |(u^\eps(t,y)-u^\eps(t,x))u^\eps(t,x)| \dmu(y) \dmu(x)\\
&\leq C_M\int_{\HH\times\HH} k_{\widetilde{G}_\eps}(d(x,y)) |(u^\eps(t,y)-u^\eps(t,x))u^\eps(t,x)| \dmu(y) \dmu(x).
\end{aligned}\]
Using the AM-GM inequality and the fact that $k_{\widetilde{G}_\eps}(r)= \eps^{-N-1}k_{\widetilde{G}}\left(\frac{r}{\eps}\right)$, one obtains:
\begin{equation}
\label{eq:estimate-LG-general-f}
\begin{aligned}
(L_{G_\eps,f}(u^\eps(t)),u^\eps(t)) &\leq \frac{1}{4c} \eps^{-N-2} \int_{\HH\times\HH} k_{\widetilde{G}}\left(\frac{d(x,y)}{\eps}\right) (u^\eps(t,y)-u^\eps(t,x))^2 \dmu(y) \dmu(x)\\
&\quad + c\,  C_M^2 \int_{\HH} (u^\eps(t,x))^2 \int_{\HH} \eps^{-N}k_{\widetilde{G}}\left(\frac{d(x,y)}{\eps}\right) \dmu(y) \dmu(x)
\end{aligned}
\end{equation}
Using Lemma \ref{diffExp}, we obtain:
\[\int_{\HH} \eps^{-N}k_{\widetilde{G}}\left(\frac{d(x,y)}{\eps}\right) \dmu(y)={\rm Vol}(\mathbb{S}^{N-1}) \int_0^\infty \eps^{-N} k_{\widetilde{G}}\left(\frac{r}{\eps}\right) (\sinh(r))^{N-1} {\rm d}r,\]
which is at most $M(\widetilde{G})$ by the change of variables $r=\eps s$ and the inequality $\sinh(\eps s)\leq \eps \sinh(s)$, valid for $s\geq 0$ and $\eps\in [0,1]$.

In the end, by choosing $a=c\, C_M^2  M(\widetilde{G})$, from \eqref{eq:expr-D-formal} and \eqref{eq:estimate-LG-general-f} we obtain:
\[\frac{d}{dt}\left[ e^{-at} \|u^\eps(t)\|^2_{L^2(\HH)}\right]\leq -\frac{1}{4}\int_{\HH\times\HH} J_\eps(d(x,y))(u^\eps(t,y)-u^\eps(t,x))^2 \dmu(y) \dmu(x), \]
which leads to:
\[e^{aT}\|u^\eps(T)\|^2_{L^2(\HH)}+\frac{1}{4}\int_0^T\int_{\HH\times\HH} J_{\eps}(d(x,y))(u^\eps(t,y)-u^\eps(t,x))^2 \dmu(y)\dmu(x)dt \leq \|u_0\|^2_{L^2(\HH)},\]
for any $T>0$. In particular, we can apply Theorem \ref{compactnessResultM} to the sequence $(u_\eps)_{\eps>0}$ to obtain the relaxation limit in Theorem \ref{convergenceConvDiff.intro}. We would like to thank the anonymous referee for suggesting the possibility of using more general non-linear terms in this relaxation result.

\subsection*{Financial support}
M.d.M. Gonz\'alez  acknowledges financial support from the Spanish  Government, grant number PID2020-113596GB-I00; additionally, Grant RED2018-102650-T funded by MCIN/AEI/ 10.13039/501100011033, and the ``Severo Ochoa Programme for Centers of Excellence in R\&D'' (CEX2019-000904-S).

L.~I.~Ignat received funding from Romanian Ministry of Research, Innovation and Digitization, CNCS - UEFISCDI, project number PN-III-P1-1.1-TE-2021-1539, within PNCDI III. D.~Manea and S.~Moroianu were partially supported from the UEFISCDI project PN-III-P4-ID-PCE-2020-0794 and the PNRR project III-C9-2023-I8-CF149.

L.~I.~Ignat and D. Manea would like to acknowledge
networking support from the COST Action CA18232  MAT-DYN-NET, supported by COST (European Cooperation in Science and Technology).

\subsection*{Disclosure statement}
The authors report there are no competing interests to declare.

\providecommand{\bysame}{\leavevmode\hbox to3em{\hrulefill}\thinspace}
\providecommand{\MR}{\relax\ifhmode\unskip\space\fi MR }

\providecommand{\MRhref}[2]{%
  \href{http://www.ams.org/mathscinet-getitem?mr=#1}{#2}
}
\providecommand{\href}[2]{#2}

\end{document}